\documentclass[12pt]{article}

\usepackage{amsthm}
\usepackage{amsmath}
\usepackage{amssymb}
\usepackage{ascmac}
\usepackage{mathrsfs}
\usepackage{graphicx}
\usepackage{longtable}
\usepackage{bm}
\usepackage{braket}
\usepackage{setspace}
\usepackage{mathrsfs}
\usepackage{subfig}
\usepackage[all]{xy}
\usepackage{here}
\usepackage[toc,page]{appendix}
\usepackage{caption}
\usepackage{hyperref}
\usepackage{comment}
\usepackage{multicol}
\usepackage{url}
\usepackage{fancyhdr}
\usepackage{ulem}

\usepackage{tikz}
\usetikzlibrary{arrows.meta,positioning, calc}
\usetikzlibrary{knots, braids, spath3, hobby}

\usepackage{diagbox}

\usepackage{enumitem}
\setlist[enumerate]{label=(\arabic*),font=\upshape}

\makeatletter
\def\mojiparline#1{
    \newcounter{mpl}
    \setcounter{mpl}{#1}
    \@tempdima=\linewidth
    \advance\@tempdima by-\value{mpl}zw
    \addtocounter{mpl}{-1}
    \divide\@tempdima by \value{mpl}
    \advance\kanjiskip by\@tempdima
    \advance\parindent by\@tempdima
}

\makeatother

\makeatother

\theoremstyle{definition}
\newtheorem{defi}{Definition}
\newtheorem{thm}[defi]{Theorem}
\newtheorem{cor}[defi]{Corollary}
\newtheorem{prop}[defi]{Proposition}
\newtheorem{lem}[defi]{Lemma}
\newtheorem{ex}[defi]{Example}
\newtheorem{rem}[defi]{Remark}

\newcommand{\ctext}[1]{\raise0.2ex\hbox{\textcircled{\scriptsize{#1}}}}

\newcommand{\NN}{\mathbb{N}}
\newcommand{\ZZ}{\mathbb{Z}}
\newcommand{\QQ}{\mathbb{Q}}

\title{HOMFLY Polynomials of the Torus Links $T(\pm 3,n)$}
\author{Norihisa Takahashi \thanks{Ryukoku University}}
\date{}

\begin{document}

\maketitle

\begin{abstract}
    We derive explicit formulas for the HOMFLY polynomials of the torus links $T(3,n)$ using braid groups and the skein relation.
    We first treat the case of $T(2,n)$ and then derive a five-term linear recurrence for an auxiliary sequence associated with $T(3,n)$.
    By solving this recurrence using a generating function, we obtain an explicit formula for the HOMFLY polynomial $P(T(3,n);y,z)$ of $T(3,n)$.
    The corresponding formula for $T(-3,n)$ is subsequently obtained from the mirror-image formula for the HOMFLY polynomial.
    As an application, we show that the HOMFLY polynomial distinguishes the links $T(3,n)$ within this family and distinguishes $T(3,n)$ from its mirror image for $n\geq 2$.
\end{abstract}
%---------------------
\section{Introduction}
%---------------------

    A link is the image of a smooth embedding of a finite disjoint union of copies of $S^1$ into the three-dimensional sphere $S^3$.
    Two links $L$ and $L'$ are said to be equivalent if they are related by an ambient isotopy, and in this case we write $L \cong L'$.

    The classification of links up to equivalence is one of the fundamental problems in knot theory, and various link invariants have been studied for this purpose.
    In particular, polynomial invariants provide effective tools for
    distinguishing links through explicit calculations on link diagrams.
    Among them, the HOMFLY polynomial is an invariant of oriented links
    defined by a skein relation.
    Under suitable substitutions of its variables, it specializes to both the Alexander polynomial and the Jones polynomial \cite{Kawauchi2015KnotTheory}.

    The HOMFLY polynomial was originally introduced as a two-variable
    polynomial invariant by Freyd, Yetter, Hoste, Lickorish, Millett, and Ocneanu \cite{FreydYetterHosteLickorishMillettOcneanu1985}.
    It was also studied independently by Przytycki and Traczyk as an
    invariant of Conway type \cite{PrzytyckiTraczyk1987}.
    Jones's construction using Hecke algebras and representations of braid groups provides a fundamental framework connecting closed braid representatives with link polynomials \cite{Jones1987Hecke}.
    From this viewpoint, it is natural to represent links as closures of braids and to study quantities that are invariant under Markov moves.

    Many general formulas are already known for the HOMFLY polynomials of torus knots and torus links.
    For example, Rosso and Jones established a general formula for torus-knot invariants arising from quantum groups.
    This formula is now known as the Rosso--Jones formula \cite{RossoJones1993}.
    Furthermore, Lin and Zheng described colored HOMFLY polynomials using characters of Hecke algebras and Schur polynomials, and obtained explicit formulas for torus knots and torus links \cite{LinZheng2010}.
    These results provide a powerful representation-theoretic framework for understanding HOMFLY-type invariants of torus knots and links.

    From the viewpoint of mathematical physics, formulas based on
    Chern--Simons theory and matrix models have also been studied.
    Brini, Eynard, and Marino introduced spectral curves associated with torus knots and links and showed that colored HOMFLY invariants are generated by topological recursion \cite{BriniEynardMarino2012}.
    In their framework, quantum colored invariants of torus knots are
    expressed as integrals over a Cartan subalgebra, or equivalently as
    matrix-model integrals.
    Thus, HOMFLY-type invariants of torus knots and links have been studied extensively from several perspectives, including quantum groups, Hecke algebras, Chern--Simons theory, and matrix models.

    In particular, the ordinary HOMFLY polynomial of $T(3,n)$ can also be obtained, in principle, as a specialization of such general formulas.
    Therefore, the aim of this paper is not to claim the existence of a new invariant or to replace the Rosso--Jones formula.
    Rather, our purpose is to give a direct skein-theoretic derivation specialized to the three-braid case.
    More precisely, we derive a recurrence relation for $T(3,n)$ from braid calculations, Markov moves, and the skein relation, and then solve this recurrence explicitly.
    The resulting formula is written in the same variables $y,z$ as those used in the skein relation, and hence it can be applied directly to concrete computations and specializations.
    One advantage of this skein-theoretic formula is that it is expressed directly in the variables $y,z$ used in the defining skein relation.
    This makes it possible, for example, to read off the highest degree in $z$ and to distinguish the links $T(3,n)$ within this family, as shown in Section~\ref{sec:applications}.

    The contribution of this paper lies in an elementary and explicit derivation of the ordinary HOMFLY polynomials of $T(\pm 3,n)$.
    As a preliminary step, we first treat the case of $T(2,n)$.
    We then analyze the five-term recurrence that arises in the computation of $T(3,n)$ and express its solution in terms of an auxiliary sequence $G_n$ defined by a generating function.
    A characteristic feature of the final formula is that the coefficient of the possible $G_{n+1}$ term cancels.
    Finally, the case of $T(-3,n)$ is obtained from the mirror-image formula for the HOMFLY polynomial.
%----------------------
\section{Preliminaries}
%----------------------
%-------------------------------------
\subsection{Braid groups and closures}
%-------------------------------------

    In this section, we recall the basic facts needed to represent links as closures of braids.

    Equip the plane with its standard orientation.
    At a crossing of an oriented link diagram, let the first vector be the tangent vector to the overpassing arc in the direction of its orientation, and let the second vector be the tangent vector to the underpassing arc in the direction of its orientation.
    The crossing is called positive if this ordered pair agrees with the orientation of the plane, and negative otherwise.

    Fix $n$ distinct points $p_1,\dots,p_n$ in $D^2$.
    A geometric braid on $n$ strands is an embedding of $n$ mutually disjoint arcs in $D^2\times[0,1]$ such that each arc joins a point $(p_i,0)$ in $D^2\times\{0\}$ to a point $(p_j,1)$ in $D^2\times\{1\}$, and the projection onto $[0,1]$ is monotone on each arc.
    We identify geometric braids up to isotopy fixing their endpoints.
    By defining the product of two braids by stacking one on top of the other, the set of equivalence classes of braids on $n$ strands forms a group.
    This group is called the braid group on $n$ strands and is denoted by $B_n$.

    \begin{thm}[Artin's presentation {\cite[Theorem 1.8]{Birman1974BraidsLinksMCG}}]
        The braid group $B_n$ admits the presentation
            \[
                B_n =
                \left\langle
                    \sigma_1,\sigma_2,\dots,\sigma_{n-1}
                    \,\middle|\,
                    \begin{matrix}
                    \sigma_i\sigma_{i+1}\sigma_i
                    =
                    \sigma_{i+1}\sigma_i\sigma_{i+1}
                    & (i=1,2,\dots,n-2),\\
                    \sigma_i\sigma_j=\sigma_j\sigma_i
                    & (|i-j|>1)
                    \end{matrix}
                \right\rangle .
            \]
        Here, $\sigma_i$ is the elementary braid in which the $i$th and
        $(i+1)$st strands form a positive crossing.
    \end{thm}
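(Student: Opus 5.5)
The plan is to prove that the group $\widetilde{B}_n$ given by the abstract presentation is isomorphic to the geometric braid group $B_n$ via $\sigma_i\mapsto\sigma_i$. I would proceed in three steps: well-definedness, surjectivity, and injectivity.

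\textbf{Well-definedness.} First, check that the geometric elementary braids satisfy the relations, so that the assignment extends to a homomorphism $\Phi:\widetilde{B}_n\to B_n$.
\begin{itemize}
\item The relation $\sigma_i\sigma_j=\sigma_j\sigma_i$ for $|i-j|>1$ holds by an isotopy sliding two crossings on disjoint pairs of strands past each other in the $[0,1]$ direction.
\item The relation $\sigma_i\sigma_{i+1}\sigma_i=\sigma_{i+1}\sigma_i\sigma_{i+1}$ is the third Reidemeister move on three strands. It is realized by an explicit level-preserving isotopy.
\end{itemize}

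\textbf{Surjectivity.} Put a geometric braid in general position with respect to the projection to the $\mathbb{R}$-coordinate of $D^2\supset\{p_1,\dots,p_n\}$ (choosing the $p_i$ on a line). Perturb it so that crossings in the projection occur at distinct heights. Between consecutive crossing heights the braid is trivial, and each crossing is $\sigma_i^{\pm1}$ for the appropriate adjacent positions. Hence every braid is a word in the generators.

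\textbf{Injectivity.} This is the main obstacle. I would follow the fibration approach (Fadell--Neuwirth). Identify $B_n$ with $\pi_1(\mathrm{UConf}_n(D^2))$, and the pure braid group $P_n$ with $\pi_1(\mathrm{Conf}_n(D^2))$. The forgetful map $\mathrm{Conf}_n\to\mathrm{Conf}_{n-1}$ is a fibration with fiber a disk minus $n-1$ points, and it has a section. Since higher homotopy of the fiber vanishes, this gives a split exact sequence
\[
1\to F_{n-1}\to P_n\to P_{n-1}\to 1 .
\]
By induction, $P_n$ is an iterated semidirect product of free groups. Its generators are $A_{ij}=\sigma_{j-1}\cdots\sigma_{i+1}\sigma_i^2\sigma_{i+1}^{-1}\cdots\sigma_{j-1}^{-1}$, and its relations are the conjugation actions of the $A_{ij}$ on the free factors.

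Next, in $\widetilde{B}_n$ define $\widetilde{P}_n$ as the kernel of the map $\widetilde{B}_n\to S_n$ with $\sigma_i\mapsto(i\ i{+}1)$. Using Reidemeister--Schreier with respect to a transversal of $S_n$, show that $\widetilde{P}_n$ is generated by the corresponding words $\widetilde{A}_{ij}$. The key algebraic step is to verify, using only the braid relations, that these generators satisfy the semidirect-product conjugation relations.

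Then $\widetilde{P}_n$ is a quotient of the known group $P_n$ that surjects onto $P_n$ compatibly with the map $\Phi$. Induction on $n$ via the split sequences, together with the five lemma, shows that $\Phi$ restricted to $\widetilde{P}_n$ is an isomorphism. Finally, comparing the extensions by $S_n$ shows that $\Phi$ is an isomorphism.

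The laborious part is the verification of the conjugation relations for the $\widetilde{A}_{ij}$ in the abstract group. I would reduce it to a few cases, according to the relative position of the indices, and treat them by direct braid-word manipulation. This is as in \cite{Birman1974BraidsLinksMCG}, which the paper cites, so in practice I would simply invoke that reference.
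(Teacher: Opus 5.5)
Your sketch is correct and is essentially the argument the paper relies on. The paper gives no proof of its own and simply cites \cite[Theorem 1.8]{Birman1974BraidsLinksMCG}, whose proof follows the same route: present the pure braid group via the split Fadell--Neuwirth sequences, then pass to $B_n$ through the extension by $S_n$.

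One simplification: once $A_{ij}\mapsto\widetilde{A}_{ij}$ defines a surjection $\psi\colon P_n\to\widetilde{P}_n$ with $\Phi\circ\psi=\mathrm{id}_{P_n}$, the map $\psi$ is automatically injective. So no extra induction or five lemma is needed at the pure-braid level, and the five lemma is used only for the final comparison over $S_n$.
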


    \begin{figure}[ht]
        \centering
        \begin{tabular}{cc}
        \begin{minipage}{0.45\textwidth}
            \centering
            \captionsetup{width=0.8\linewidth}
            \includegraphics[width=\textwidth]{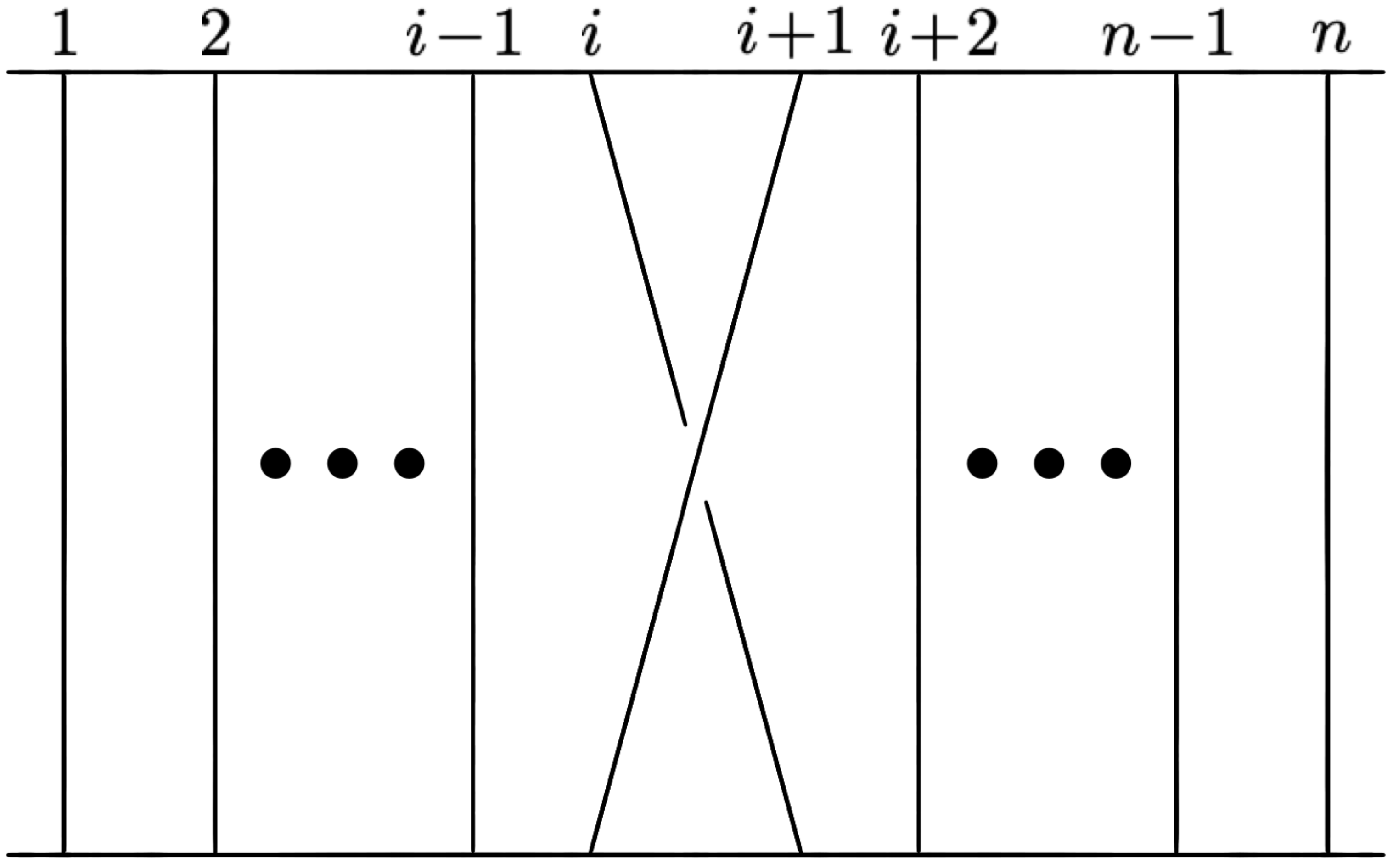}
            \caption{$\sigma_i$}
            \label{fig:sigmai}
        \end{minipage}
        \hspace{5mm}
        \begin{minipage}{0.45\textwidth}
            \centering
            \captionsetup{width=0.8\linewidth}
            \includegraphics[width=\textwidth]{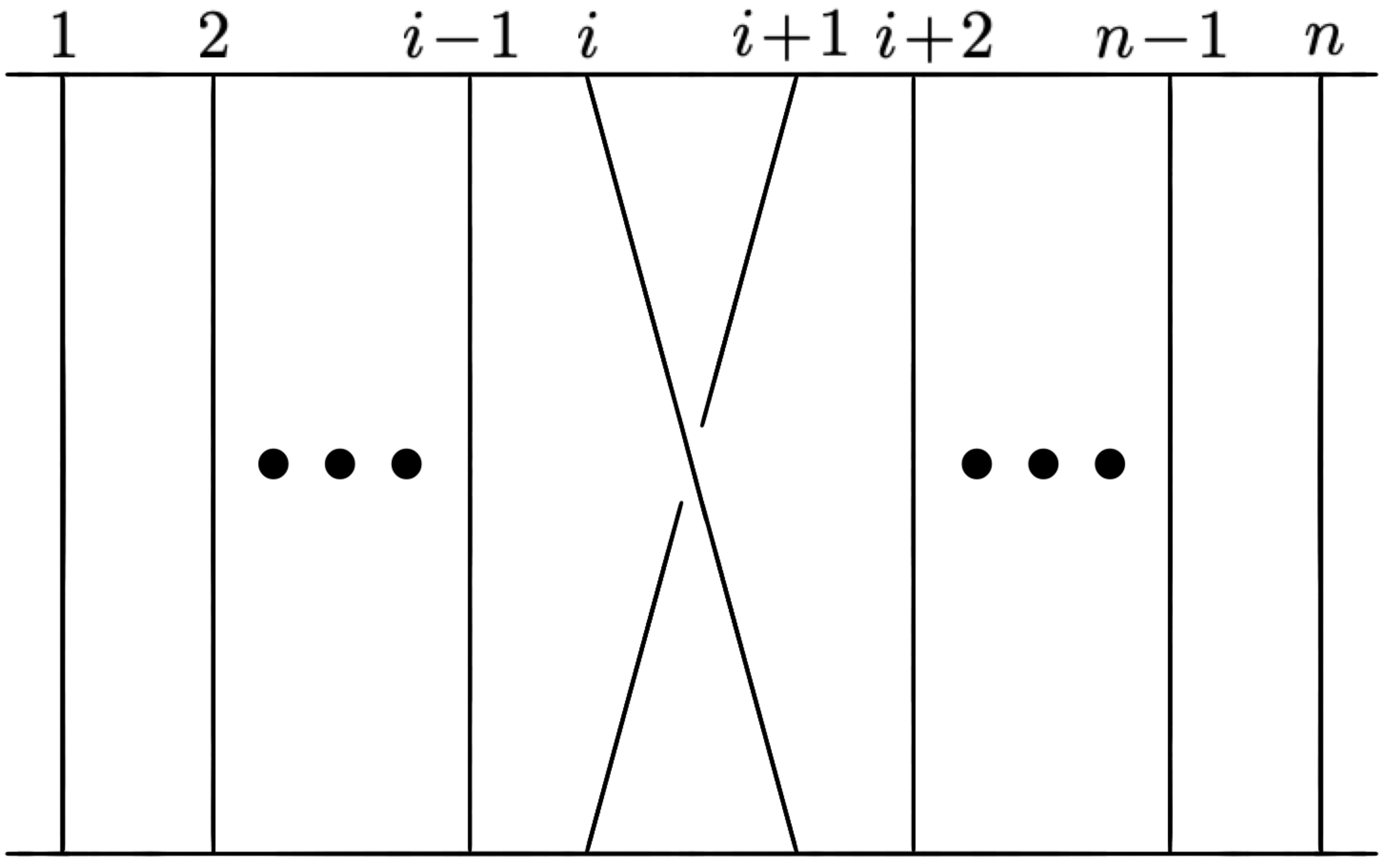}
            \caption{$\sigma_i^{-1}$}
            \label{fig:sigmai_inv}
        \end{minipage}
        \end{tabular}
    \end{figure}

    For $x\in B_n$, connect the $k$th endpoint at the top of a diagram of $x$ to the $k$th endpoint at the bottom, as shown in Figure~\ref{fig:closure}.
    The resulting link is called the closure of $x$ and is denoted by
    $\overline{x}$.
    If a link $L$ satisfies $L=\overline{x}$, then $\overline{x}$ is called a braid representative of $L$.

    \begin{figure}[ht]
        \centering
        \includegraphics[width=0.5\textwidth]{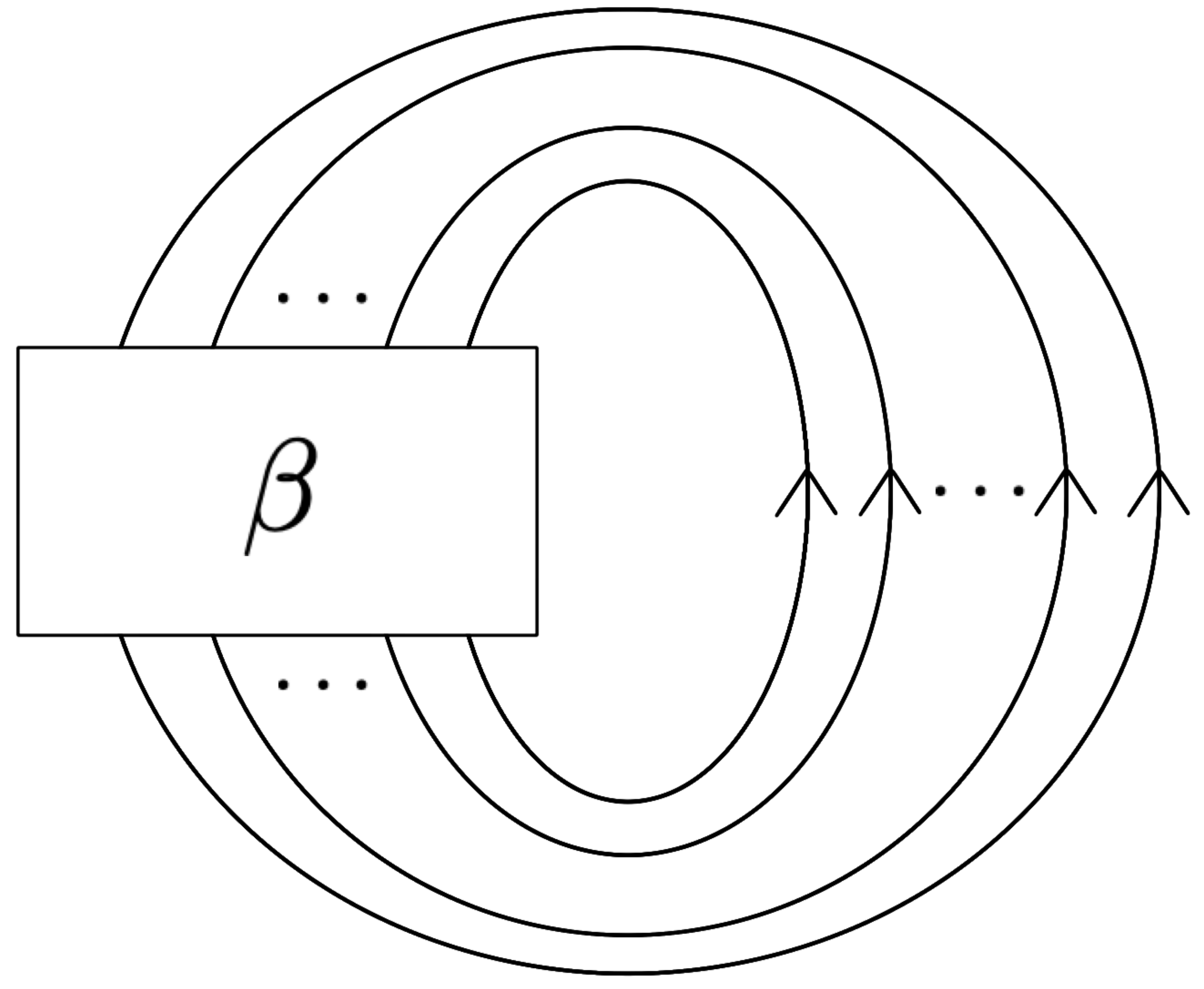}
        \caption{The closure of a braid}
        \label{fig:closure}
    \end{figure}

    The following theorem states that every link can be represented as the closure of a braid.

    \begin{thm}[Alexander's theorem {\cite[Theorem 2.1]{Birman1974BraidsLinksMCG}}]
        For every link $L$, there exist an integer $n\in\NN$ and a braid $x\in B_n$ such that $L=\overline{x}$.
    \end{thm}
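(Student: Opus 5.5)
We prove Alexander's theorem by the standard ``threading'' argument: isotope $L$ into a position where it winds monotonically around a fixed axis, and then cut along a half-plane bounded by that axis to read off a braid.

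\textbf{Setup.} Start from a polygonal representative of $L$: since $L$ is a smooth embedding of finitely many circles, it is isotopic to a closed polygonal link with finitely many edges. Project it generically onto a plane to obtain a diagram $D$ with finitely many transverse double points. Fix a point $O$ in the plane not on $D$, and let the braid axis be the line through $O$ perpendicular to the projection plane. Orient $L$.

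\textbf{Making $D$ monotone.} Call an edge of $D$ \emph{good} if, traversed in the orientation direction, it winds counterclockwise about $O$. Call it \emph{bad} if it winds clockwise. After a small perturbation we may assume no edge lies on a ray from $O$, so every edge is good or bad. A bad edge $AB$ is removed by an Alexander move, with the crossing information chosen so that the move is an isotopy in space.
\begin{itemize}
\item Choose a point $C$ such that the triangle $ABC$ contains $O$ in its interior.
\item Replace $AB$ by the two edges $AC$ and $CB$. Both are good, since they go around the far side of $O$.
\item Lift the new edges entirely above (or entirely below) the rest of the link. The triangle swept out then meets no other part of $L$, so the move is an isotopy.
\end{itemize}
If a bad edge is too long for this to work cleanly, first subdivide it into short pieces, so that each triangle crosses the rest of the diagram only in the projection. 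Each move strictly decreases the number of bad edges, so after finitely many moves every edge is good.

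\textbf{Reading off the braid.} Now $L$ is a closed curve whose angular coordinate about the axis is strictly increasing. Take a generic ray $r$ from $O$; it meets $L$ in some number $n$ of points. Cutting the solid torus along the half-plane over $r$ turns $L$ into $n$ arcs, each monotone in the angular parameter. After reparametrizing the angle as the interval $[0,1]$ and the radial/height coordinates as $D^2$, these arcs form a geometric braid $x\in B_n$. Gluing the ends back recovers $L$, so $L=\overline{x}$.

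The main obstacle is the middle step. Two things need care there: choosing $C$ and subdividing bad edges so that each triangle misses the rest of the link in space, and checking that the bad-edge count really drops, so that no new bad edges are created and the process terminates. The last step also needs the identification of the cut-open link with a standard braid and its closure, matching the closure convention of Figure~\ref{fig:closure}. Alternatively, one can cite Vogel's algorithm via Seifert circles, which achieves the same monotone position.
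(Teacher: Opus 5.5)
Your argument is the classical Alexander threading proof. The paper gives no proof of its own here and simply cites Birman's Theorem 2.1, where the result is proved by essentially this method, so your approach is the same as the one the paper relies on. The one point to make precise in your middle step is the subdivision criterion: cut each bad edge, relative to the current diagram (earlier moves create new crossings), into pieces that are each entirely an overpass or entirely an underpass (for instance, at most one crossing per piece), and place the apex $C$ far above or far below accordingly, so that the triangle misses the rest of $L$ and each original bad segment disappears after finitely many moves without creating new bad edges.
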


    In general, a braid representative of a link is not unique.
    We write $L\cong L'$ when two links $L$ and $L'$ are equivalent.
    The following theorem describes the two fundamental operations relating braid representatives with equivalent closures.

    \begin{thm}[{Markov's theorem \cite{Markov1936FreeEquivalenceClosedBraids}}]
    \label{thm:Markov}
        The closures of two braids are equivalent if and only if one braid can be obtained from the other by a finite sequence of the following two operations:
        \begin{enumerate}
            \item For $x,y\in B_n$, $\overline{x}\cong\overline{y^{-1}xy}$.
            \item For $x\in B_n$, $\overline{x} \cong \overline{\iota(x)\sigma_n^{\pm1}}$, where $\iota:B_n\hookrightarrow B_{n+1}$ is the natural inclusion.
        \end{enumerate}
    \end{thm}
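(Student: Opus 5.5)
The \emph{if} direction is a direct geometric check. For (1), the closure $\overline{y^{-1}xy}$ is obtained from $\overline{x}$ by sliding the block $y$ around the closing strands of the solid torus: the $y^{-1}$ at the bottom and the $y$ at the top meet through the closure arcs and cancel. This is an isotopy supported in a neighbourhood of the closure, so $\overline{y^{-1}xy}\cong\overline{x}$. For (2), the closure of $\iota(x)\sigma_n^{\pm1}$ differs from $\overline{x}$ only by a small kink on the new $(n+1)$st strand. That kink is removed by a Reidemeister I move, so the two closures are again isotopic.

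For the \emph{only if} direction I would follow a diagrammatic approach in the style of Vogel and Traczyk rather than Markov's original argument. The first step is to fix a braiding algorithm. To an oriented link diagram $D$ I assign a braid $\beta(D)$ by repeatedly applying Vogel moves, which are Reidemeister II moves reducing the number of incompatible Seifert circles. When no such move is possible, the Seifert circles are nested and coherently oriented around a single axis, and the diagram can be read off as a closed braid. The second step is to prove that $\beta(D)$ is well defined up to Markov moves. This means showing that different orders of Vogel moves, and different choices of the point at which the diagram is cut open to read the braid, change the result only by conjugations (move (1)) and stabilizations (move (2)).

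The third step is to show that if $D$ and $D'$ differ by a single Reidemeister move, then $\beta(D)$ and $\beta(D')$ are Markov equivalent. I would split this by the type of move and by the position of the move relative to the Seifert circles. Reidemeister I produces a stabilization $\sigma_n^{\pm1}$ after conjugating so that the kink lies on the last strand. Coherently oriented Reidemeister II and III moves inside a braided region become the braid relations of Artin's presentation (the relations $\sigma_i\sigma_{i+1}\sigma_i=\sigma_{i+1}\sigma_i\sigma_{i+1}$ and the inverse cancellations). Moves involving incoherent orientations are first converted into coherent ones by additional Vogel moves, which are already controlled by step two. The last step uses the fact that if $D=\overline{x}$ is already a closed braid diagram then $\beta(D)$ is conjugate to $x$. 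Combined with Reidemeister's theorem, this shows that for $\overline{x}\cong\overline{y}$ we get a chain $x\sim\beta(D_0)\sim\cdots\sim\beta(D_k)\sim y$ in which each $\sim$ is a finite sequence of Markov moves.

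The main obstacle is the second step, the independence of $\beta(D)$ from the choices made in the algorithm, together with the incoherent cases of the third step. Here one has to verify that two Vogel moves performed in different orders, or on different pairs of Seifert circles, lead to braids related by conjugation and stabilization. I would first try to reduce to a local commutation lemma: two Vogel moves with disjoint supports commute up to conjugation, while overlapping ones differ by a stabilization followed by braid relations. I would then argue by induction on the number of incompatible Seifert circle pairs, in the manner of Traczyk's proof.
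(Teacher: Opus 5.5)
The paper gives no proof of this statement. It is quoted from Markov's paper and from Birman's book (Corollary 2.3.1), where the argument works with polygonal links braided about an axis in $\mathbb{R}^3$ rather than with Seifert circles of diagrams. Your route through Vogel's algorithm and a Traczyk-style diagrammatic argument is therefore a genuinely different one. It stays entirely within diagrams and Reidemeister's theorem, and it produces braids with as many strands as the diagram has Seifert circles. Your ``if'' direction is correct. The four-step architecture of the ``only if'' direction (a braiding map, its independence of choices up to Markov moves, its behaviour under Reidemeister moves, and its value on closed braid diagrams) is the right one. A small correction: a Vogel move keeps the number of Seifert circles fixed and reduces the number of incompatible \emph{pairs}.

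The gap is that step two, which carries essentially all of the content of the theorem in this approach, is only planned, and the mechanism you propose for it fails. Two Vogel moves with disjoint supports need not commute. The reason is that a Vogel move on an incompatible pair replaces the two circles by one circle surrounding both, together with a small new circle.

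Concretely, take Seifert circles $C_1,C_2$ oriented counterclockwise and side by side. Place circles $E,E'$, oriented clockwise, above and below the gap between them. Add one crossing joining $C_1$ to $E$, $E$ to $C_2$, $C_2$ to $E'$, and $E'$ to $C_1$. Then $C_1,C_2$ are incompatible, and both lie on the boundary of the central face and of the outer face. After the Vogel move on $C_1,C_2$ in the central face, their edges on the outer face lie on the same Seifert circle. So the Vogel move in the outer face, whose support is disjoint from the first, is no longer available.

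Hence you cannot perform both moves in either order and apply the induction hypothesis to a common diagram. The complete reductions of the two diagrams must be compared by a separate argument. Two further points in step two are also open:
\begin{enumerate}
\item The over and under versions of a single Vogel move also need to be compared, and your plan does not mention them.
\item The claim that overlapping moves differ by a stabilization and braid relations is asserted rather than derived.
\end{enumerate}
The non-braidlike cases of step three are likewise only asserted. An example is a Reidemeister II move between two arcs of the same Seifert circle, which creates new circles and new incompatible pairs. Until these comparisons are carried out, the induction on the height has no inductive step. The independence of $\beta(D)$ from the choices, which is exactly what Markov's theorem amounts to here, therefore remains unproved.
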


    \begin{rem}
        The operation in Theorem~\ref{thm:Markov}(2) is called
        Markov stabilization.
        It increases the number of strands by one.
        The inverse operation is called Markov destabilization.
        The original form of Markov's theorem is due to Markov; for a precise formulation and proof of the equivalence of closed braids, see also Corollary 2.3.1 of \cite{Birman1974BraidsLinksMCG}.
    \end{rem}

    \begin{figure}[ht]
        \centering
        \begin{tabular}{cc}
        \begin{minipage}{0.4\textwidth}
            \centering
            \captionsetup{width=0.8\linewidth}
            \includegraphics[width=\textwidth]{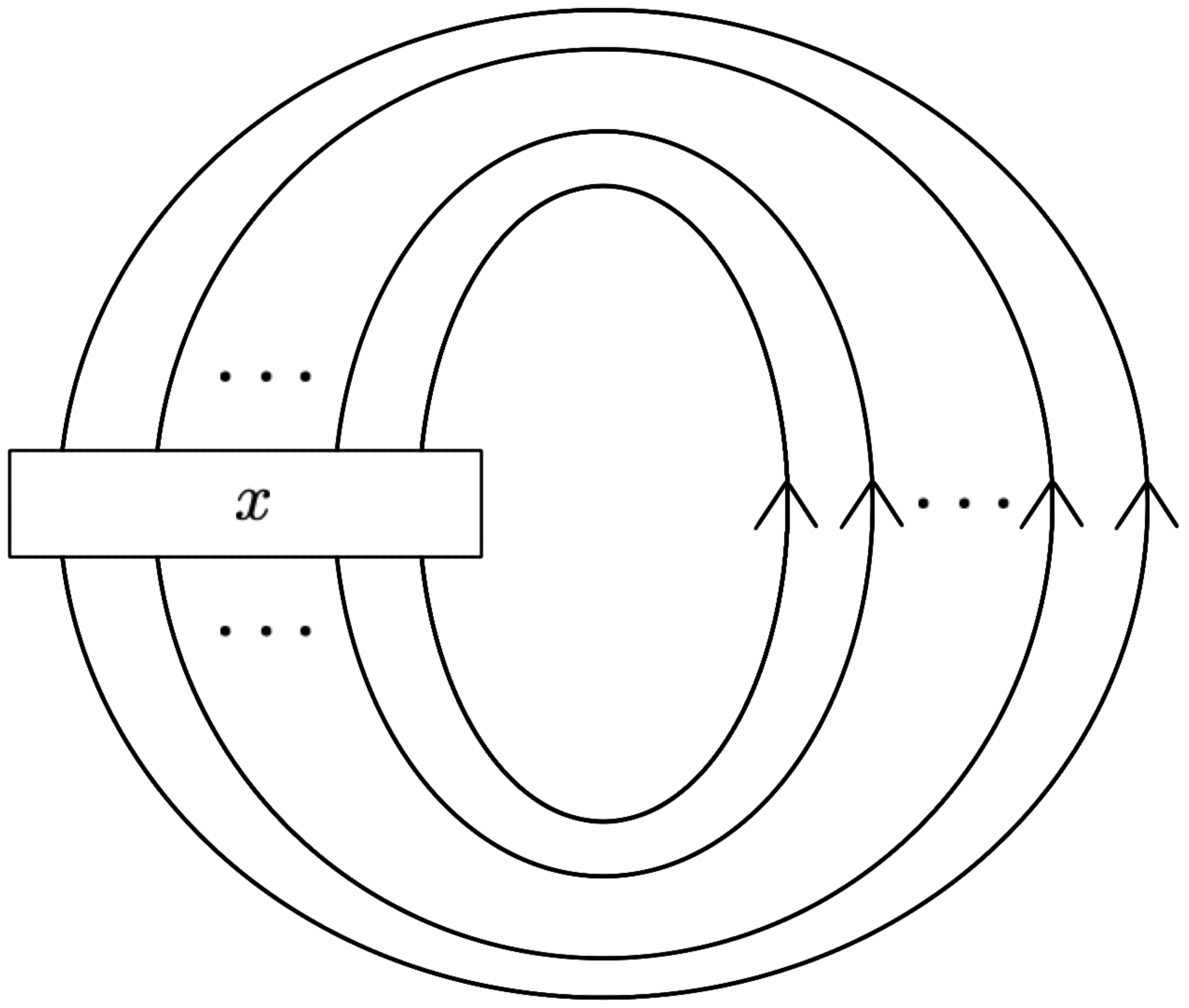}
        \end{minipage}
        $\cong$
        \begin{minipage}{0.4\textwidth}
            \centering
            \captionsetup{width=0.8\linewidth}
            \includegraphics[width=\textwidth]{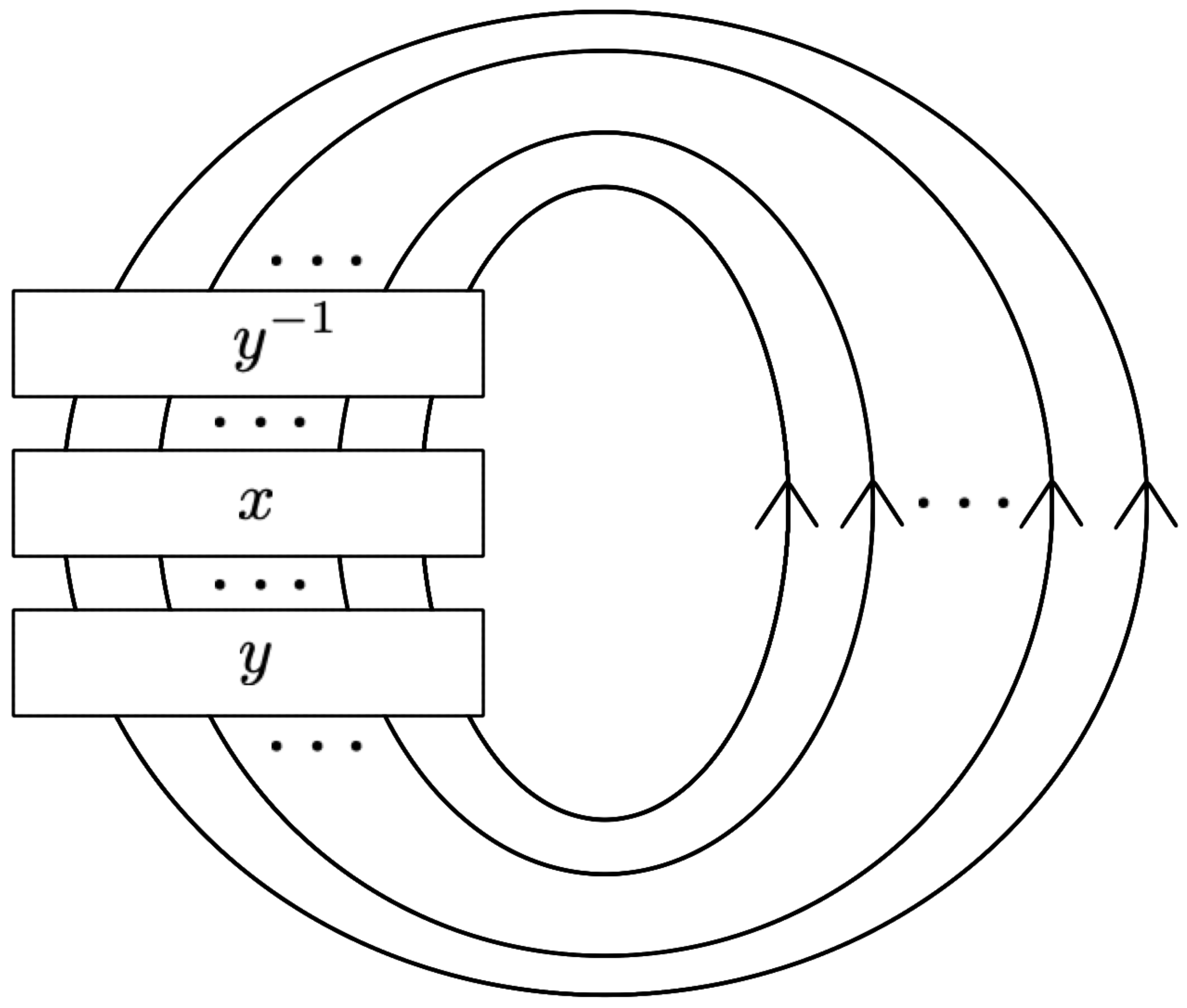}
        \end{minipage}
        \end{tabular}
        \caption{Markov move I}
        \label{fig:Markov_move_1}
    \end{figure}

    \begin{figure}[ht]
        \centering
        \begin{tabular}{ccc}
        \begin{minipage}{0.3\textwidth}
            \centering
            \captionsetup{width=0.8\linewidth}
            \includegraphics[width=\textwidth]{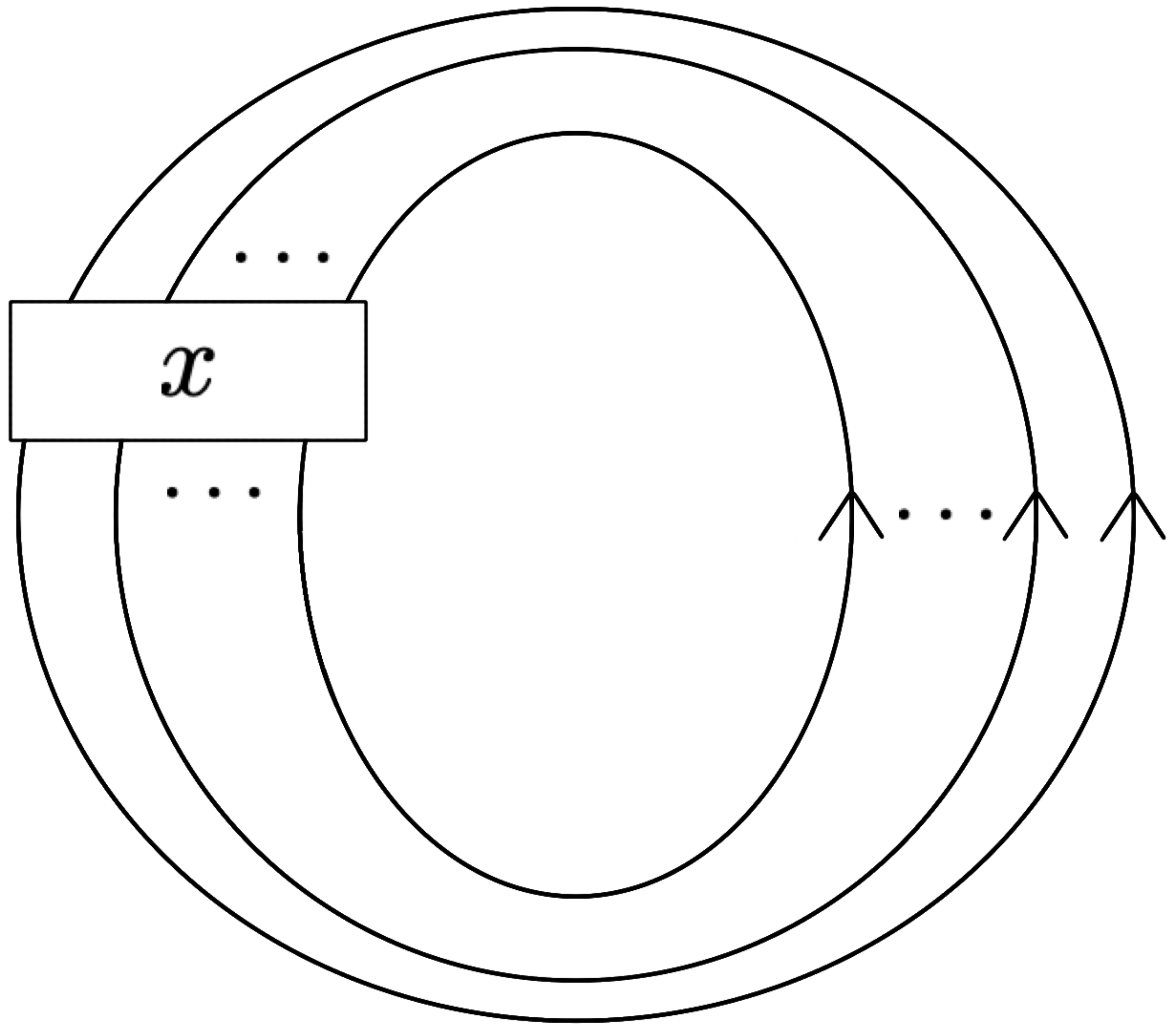}
        \end{minipage}
        $\cong$
        \begin{minipage}{0.3\textwidth}
            \centering
            \captionsetup{width=0.8\linewidth}
            \includegraphics[width=\textwidth]{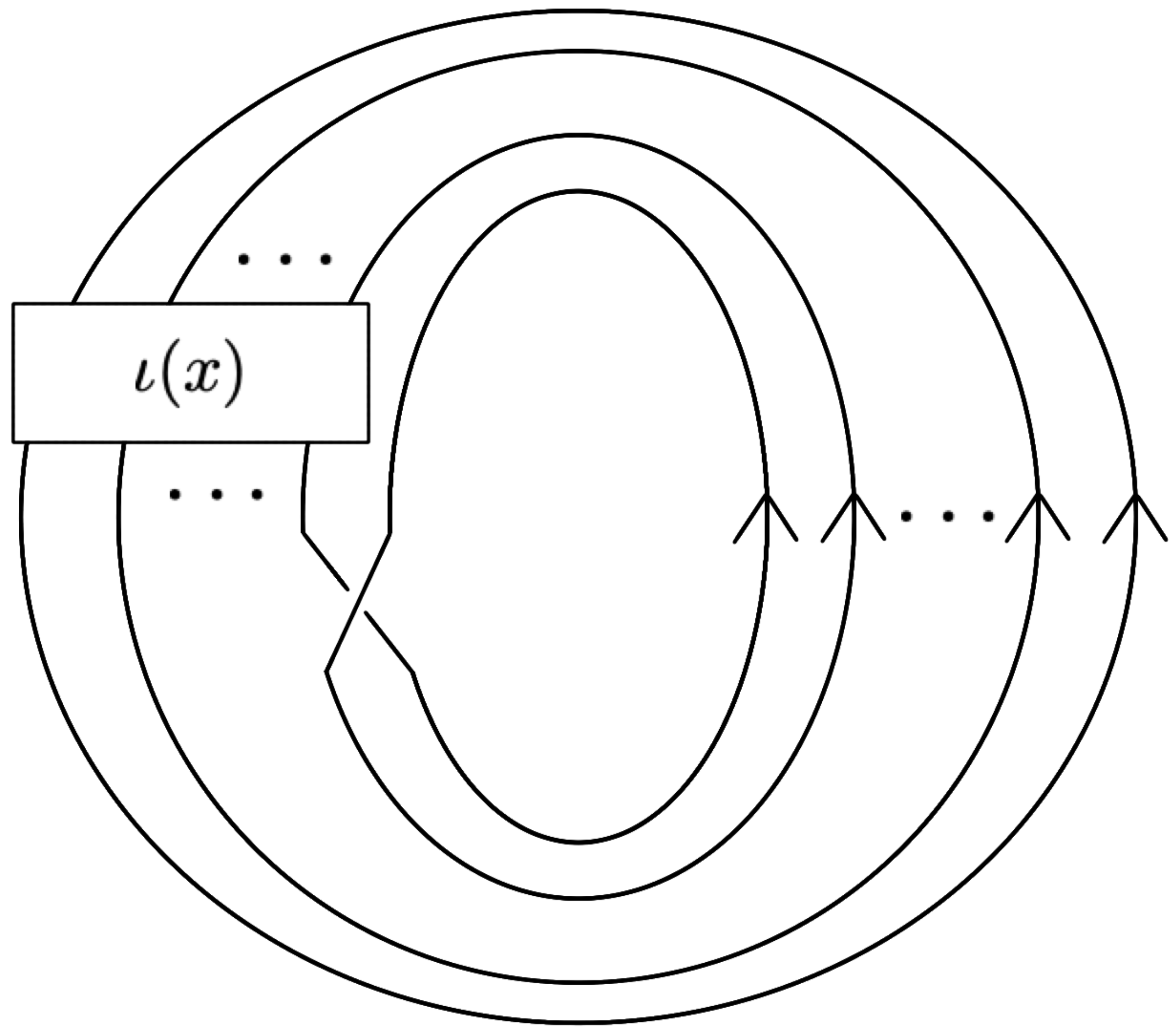}
        \end{minipage}
        $\cong$
        \begin{minipage}{0.3\textwidth}
            \centering
            \captionsetup{width=0.8\linewidth}
            \includegraphics[width=\textwidth]{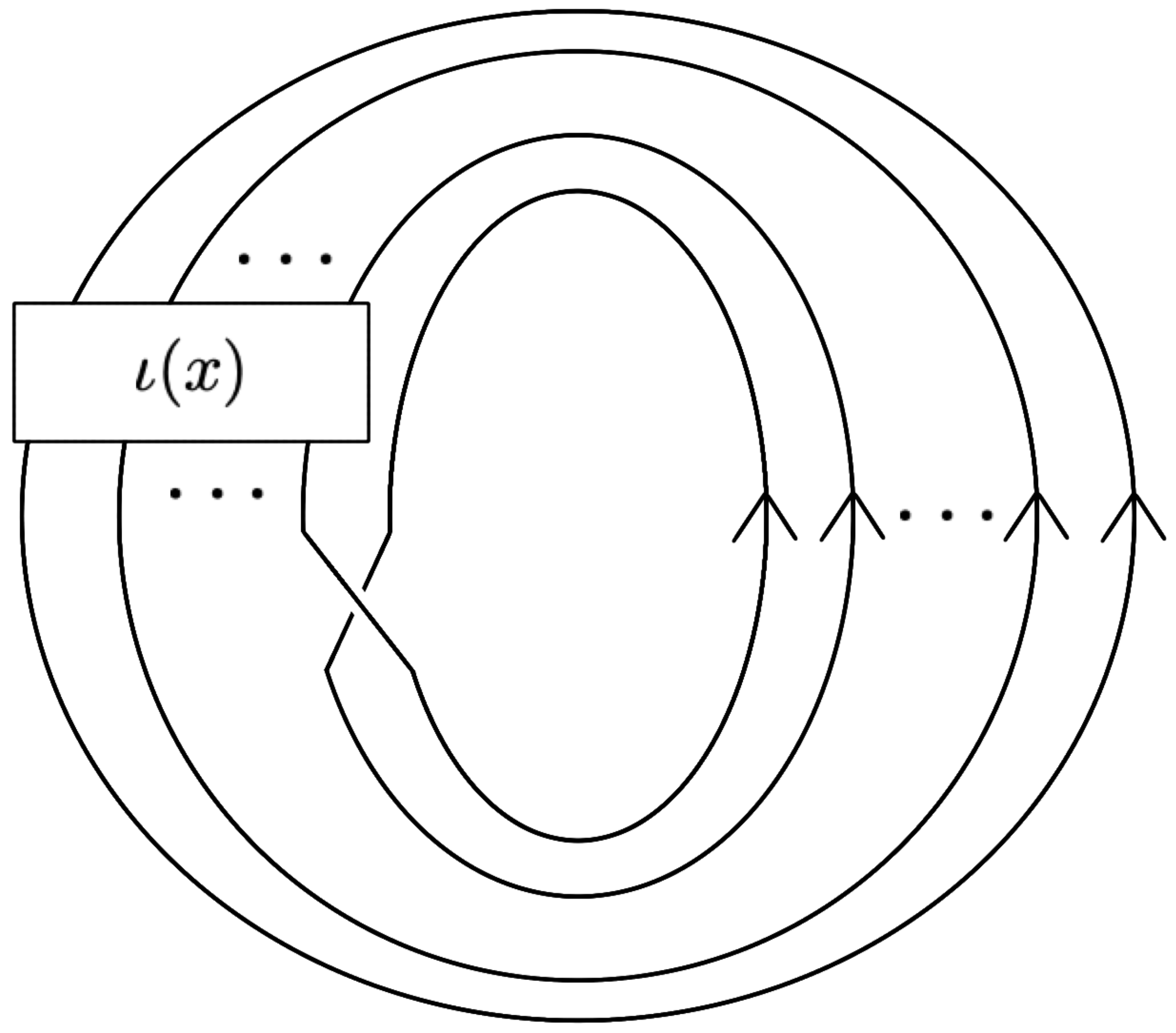}
        \end{minipage}
        \end{tabular}
        \caption{Markov move II}
        \label{fig:Markov_move_2}
    \end{figure}

    The first Markov move implies that, after taking closures, a cyclic permutation of a braid word does not change the resulting link.
    We state this fact as a lemma for later use.

    \begin{lem}
    \label{lem:cyclic_closure}
        For $x,y\in B_n$, the closures $\overline{xy}$ and
        $\overline{yx}$ are equivalent.
    \end{lem}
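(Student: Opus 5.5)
The claim follows directly from Markov move I (Theorem~\ref{thm:Markov}(1)), which states that $\overline{x}\cong\overline{w^{-1}xw}$ for all $x,w\in B_n$. The idea is to choose the conjugating braid so that the conjugate of $xy$ is exactly $yx$.

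Given $x,y\in B_n$, apply Theorem~\ref{thm:Markov}(1) to the braid $xy\in B_n$, taking $x$ itself as the conjugating element. This gives
\[
\overline{xy}\cong\overline{x^{-1}(xy)x}.
\]
Associativity in the group $B_n$ then gives $x^{-1}(xy)x=(x^{-1}x)(yx)=yx$. Hence $\overline{xy}\cong\overline{yx}$, which is the statement of the lemma.

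The only point needing care is that the ``if'' direction of Markov's theorem is used: a single move of type (1) yields equivalent closures. Geometrically this is also clear, since in the closure one can slide the block $x$ around the closing strands from the bottom of the braid to the top. That sliding is an ambient isotopy of $S^3$ that preserves the link. Since the statement follows from a one-line group computation, no real obstacle arises.
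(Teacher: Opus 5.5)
Your proof is correct and matches the paper's own argument exactly: apply Markov move I (Theorem~\ref{thm:Markov}(1)) to $xy$ with conjugating braid $x$, so that $\overline{xy}\cong\overline{x^{-1}(xy)x}=\overline{yx}$. Renaming the conjugating element to $w$, to avoid a clash with the lemma's $y$, is a sensible touch.
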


    \begin{proof}
        Applying Theorem~\ref{thm:Markov}(1) to the braids $xy$ and $x$, we obtain
            \[
                \overline{xy}
                \cong
                \overline{x^{-1}(xy)x}
                =
                \overline{yx}.
            \]
    \end{proof}

%---------------------------------
\subsection{The HOMFLY polynomial}
%---------------------------------

    In this section, we define the HOMFLY polynomial.
    We regard the HOMFLY polynomial as an element of the Laurent polynomial ring $\ZZ[y^{\pm1},z^{\pm1}]$.

    \begin{defi}[Skein triple]
        Let $D$ be an oriented link diagram and let $P$ be one of its
        crossings.
        Let $D_+$ and $D_-$ be the diagrams obtained by replacing the
        crossing at $P$ with a positive and a negative crossing, respectively, and let $D_0$ be the diagram obtained by smoothing the crossing.
        The triple $(D_+,D_-,D_0)$ is called a skein triple.
    \end{defi}

    \begin{figure}[ht]
        \centering
        \includegraphics[width=0.5\textwidth]{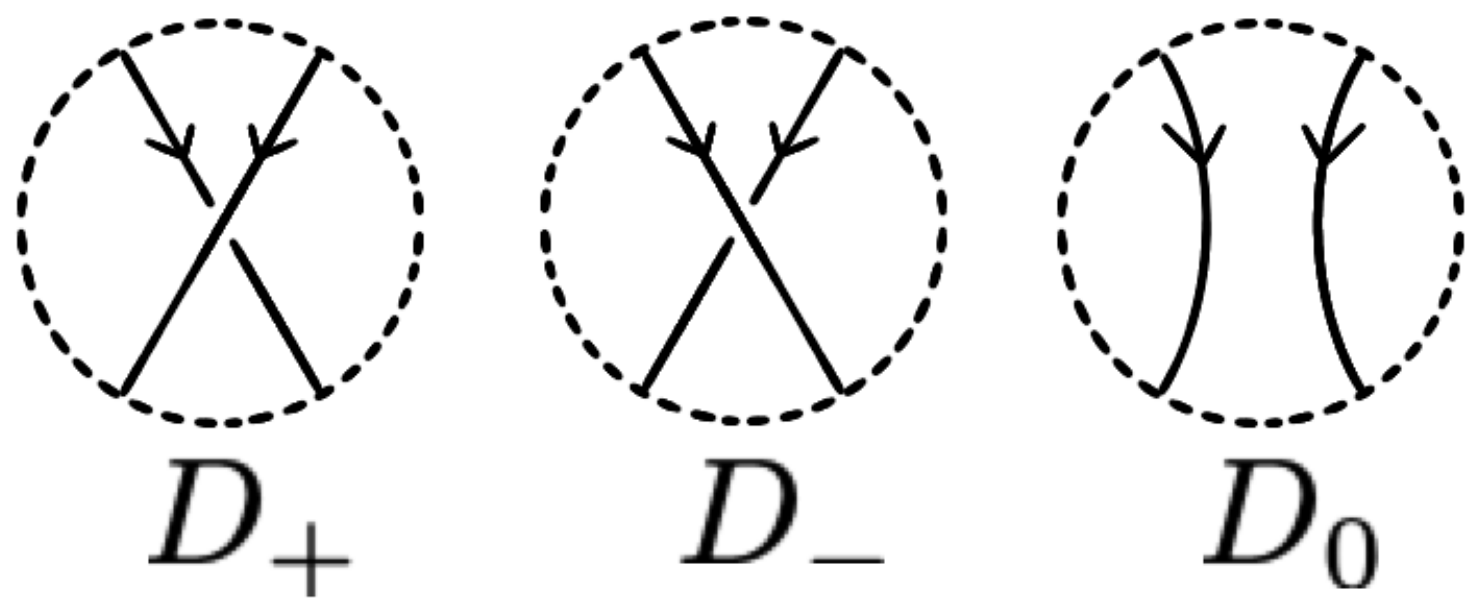}
        \caption{A skein triple}
        \label{fig:Skein_triple}
    \end{figure}

    We denote the unknot by $\bigcirc$.

    \begin{defi}[HOMFLY polynomial]
        Let $y$ and $z$ be indeterminates.
        The HOMFLY polynomial $P(D;y,z)$ of an oriented link diagram $D$ is characterized by the following two conditions:
        \begin{enumerate}
            \setlength{\itemsep}{5pt}
            \item $P(\bigcirc;y,z)=1$.
            \item For every skein triple $(D_+,D_-,D_0)$,
                \[
                    yP(D_+;y,z)
                    +y^{-1}P(D_-;y,z)
                    =
                    zP(D_0;y,z).
                \]
        \end{enumerate}
    \end{defi}

    Our definition follows \cite[Section 3]{Kawauchi2015KnotTheory}.
    Although various conventions for the HOMFLY polynomial appear in the literature, they are mutually equivalent after suitable changes of variables.

    The following theorem shows that $P(D;y,z)$ defines an invariant of
    oriented links.

    \begin{thm}[Existence and invariance of the HOMFLY polynomial {\cite{FreydYetterHosteLickorishMillettOcneanu1985, PrzytyckiTraczyk1987}}]
    \label{thm:HOMFLY_existence}
        There exists a unique Laurent polynomial $P(D;y,z)$ satisfying the two conditions above, and it is invariant under the Reidemeister moves.
    \end{thm}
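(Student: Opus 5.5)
The plan follows the classical Lickorish--Millett argument: uniqueness by induction on a complexity of diagrams, and existence by defining the invariant on ordered based diagrams and then showing it is independent of all choices. The paper cites this result, so the outline below only records the proof strategy.

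\emph{Uniqueness.} To any diagram $D$ with $c$ crossings and $m$ components, attach a choice of base points and an ordering of the components. Call $D$ \emph{descending} if, travelling from the base points in that order, each crossing is first met as an overcrossing. A descending diagram represents an $m$-component unlink. Any diagram can be made descending by switching some set of crossings. At each switched crossing the skein relation writes $P(D_\pm)$ as
\[
P(D_\pm) = -y^{\pm 2}P(D_\mp) \pm y^{\pm1} z\,P(D_0)\quad\text{(up to sign conventions)},
\]
where $D_0$ has $c-1$ crossings. Induction on the pair (number of crossings, number of crossings to switch) therefore reduces everything to unlinks. For the trivial link of $m$ components, apply the skein relation to a kink diagram. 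This gives $y\cdot 1 + y^{-1}\cdot 1 = z\,P(\bigcirc^{2})$, hence
\[
P(\bigcirc^{m}) = \left(\frac{y+y^{-1}}{z}\right)^{m-1}.
\]
Consequently any two functions satisfying the two conditions agree on every diagram.

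\emph{Existence.} Define $P$ on diagrams with at most $c$ crossings by induction on $c$. On a diagram with $c$ crossings and a chosen base-point data $b$, set $P_b(D)$ by the recursion above, ending at the descending diagram with unlink value $\left(\frac{y+y^{-1}}{z}\right)^{m-1}$. Then check, in order:
\begin{enumerate}
\item the skein relation holds at every crossing, not only at the switched ones;
\item $P_b$ is independent of the order in which the crossings are switched;
\item $P_b$ is independent of moving a base point past a crossing and of reordering the components;
\item the resulting $P$ is invariant under the Reidemeister moves I--III.
\end{enumerate}
Each check uses the induction hypothesis on diagrams with fewer crossings, where $P$ is already a well-defined invariant. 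Reidemeister moves that raise the crossing number are handled by choosing base points so that the new crossings are descending.

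\emph{Main obstacle.} The hardest steps are (3) and (4): base-point independence and invariance under Reidemeister III with arbitrary orientations. For Reidemeister III I would first place the base points so that the three strands in the move are ordered top to bottom. Both sides are then descending at the affected crossings, and the recursion on all other crossings proceeds identically on the two sides. Base-point changes reduce to a single crossing, where the difference between the two choices is a skein combination of smaller diagrams that cancels by induction.

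Finally, values lie in $\ZZ[y^{\pm1},z^{\pm1}]$. This is because $z$ divides $y+y^{-1}$ only after inverting $z$, which is allowed in that ring.
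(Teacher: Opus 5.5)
The paper gives no proof of this theorem; it is quoted from the cited sources. Your outline is therefore best measured against the Lickorish--Millett argument, whose overall structure you reproduce.

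The step that fails as written is your treatment of Reidemeister III. It is not always possible to choose base points and an ordering so that the three strands of the move are met in the order top, middle, bottom.
\begin{itemize}
\item Suppose the top and bottom strands lie on one component $K$ and the middle strand on another component $K'$. The top--middle crossing is descending only if $K$ precedes $K'$, while the middle--bottom crossing is descending only if $K'$ precedes $K$.
\item Suppose all three strands lie on one component and are traversed in the cyclic order top, bottom, middle. Then no base point works.
\end{itemize}
So after clearing the crossings outside the move you can still be left with bad crossings inside it, and your argument does not deal with them.

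The missing idea is as follows. With base points away from the move, switch a bad crossing between two strands that are \emph{adjacent} in the height order (top--middle or middle--bottom), on both sides of the move at once.
\begin{itemize}
\item The new over/under data is still a linear order, so you again have a genuine Reidemeister III move, now with one fewer bad crossing. Switching the top--bottom crossing alone would instead give a cyclic configuration, which is not a Reidemeister move.
\item Such an adjacent bad pair exists whenever some crossing of the move is bad, because the first-encounter order on the three strands is linear.
\item Corresponding crossings on the two sides have the same sign, so the skein coefficients agree.
\item The two smoothed diagrams differ only by sliding the third strand across the smoothing. That strand lies entirely above or entirely below both smoothed arcs, so the slide is a planar isotopy or a pair of Reidemeister II moves through diagrams with at most $c-1$ crossings.
\end{itemize}
The inductive hypothesis then identifies the values of the two smoothed diagrams. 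Note that this hypothesis is only invariance under moves that stay within $c-1$ crossings, which is all you actually have at that stage, not full invariance as your outline suggests.

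Two smaller points. First, your single-crossing argument for (3) covers moving a base point but not reordering components. Exchanging two components in the order changes the status of every crossing between them simultaneously, so that part of (3) still needs its own argument. Second, the displayed switching formula has both the exponents and a sign wrong. From $yP(D_+)+y^{-1}P(D_-)=zP(D_0)$ one gets $P(D_\pm)=-y^{\mp2}P(D_\mp)+y^{\mp1}zP(D_0)$.
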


    For an oriented link $L$, we define $P(L;y,z)$ to be the HOMFLY polynomial of any diagram representing $L$.

    The following proposition is an immediate consequence of the skein
    relation.

    \begin{prop}
    \label{prop:unknot_union}
        For any oriented link diagram $D$,
        \[
            P(\bigcirc\cup D;y,z)
            =
            z^{-1}(y+y^{-1})P(D;y,z).
        \]
    \end{prop}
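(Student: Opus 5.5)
The plan is to apply the skein relation to a single crossing that is created by a Reidemeister I move on the unknot component. Since the HOMFLY polynomial is a link invariant (Theorem~\ref{thm:HOMFLY_existence}), the diagram representing $\bigcirc\cup D$ can be chosen freely. Take the split diagram in which a small circle lies in a region of the plane disjoint from $D$. Then modify this small circle by a Reidemeister I move so that it has a single kink with one crossing $P$. The resulting diagram still represents $\bigcirc\cup D$, because the kinked component is still an unknot split from $D$.

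Next, form the skein triple at the crossing $P$. Changing the kink's crossing from positive to negative gives another one-crossing kinked unknot, again split from $D$. So both $D_+$ and $D_-$ represent the link $\bigcirc\sqcup D$ with the same split unknot, up to Reidemeister I. Hence
\[
P(D_+;y,z)=P(D_-;y,z)=P(\bigcirc\cup D;y,z).
\]
The orientation-respecting smoothing $D_0$ at a kink crossing splits the kinked circle into two small disjoint circles. So $D_0$ represents $\bigcirc\cup\bigcirc\cup D$, which has one more split unknot than the original. This is the step to check carefully with the orientation conventions: smoothing a curl along the orientation always yields two circles rather than one.

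Plugging into the skein relation gives
\[
(y+y^{-1})P(\bigcirc\cup D;y,z)=zP(\bigcirc\cup\bigcirc\cup D;y,z).
\]
This is the wrong direction: it relates $k$ unknots to $k+1$. To get the stated identity, I would instead run the same construction on the diagram $D$ itself rather than on $\bigcirc\cup D$. That is, add a kink to an arc of some component of $D$ (or to the unknot if $D$ is empty). The two resolutions $D_\pm$ are isotopic to $D$, and the smoothing $D_0$ is $D$ with a split unknot pushed off, namely $\bigcirc\cup D$. This gives
\[
(y+y^{-1})P(D;y,z)=zP(\bigcirc\cup D;y,z),
\]
and dividing by $z$, which is allowed in $\ZZ[y^{\pm1},z^{\pm1}]$, yields the claim.

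The case where $D$ is empty is covered by taking $D$ to be the unknot's own diagram, with the convention that the formula is applied to nonempty $D$. The main obstacle is only verifying that the smoothing of a kink produces a split unknot together with the rest of the diagram, which is a local picture check.
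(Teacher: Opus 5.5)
Your final argument is the paper's proof. You put a kink on an arc of $D$: then $D_\pm$ both reduce to $D$ by a Reidemeister I move, and the oriented smoothing $D_0$ is $D$ together with a split small circle, which is exactly the local deformation of Figure~\ref{fig:Dhenkei}. Your opening detour is not actually wrong (it is the same identity with $D$ replaced by $\bigcirc\cup D$) but it is unnecessary, so you can drop it.
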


    \begin{proof}
        Applying the skein relation to the local deformation shown in
        Figure~\ref{fig:Dhenkei}, we obtain
            \begin{align*}
                yP(D;y,z)+y^{-1}P(D;y,z)
                    &=zP(\bigcirc\cup D;y,z),\\
                P(\bigcirc\cup D;y,z)
                    &=z^{-1}(y+y^{-1})P(D;y,z).
            \end{align*}
    \end{proof}

    \begin{figure}[ht]
        \centering
        \includegraphics[width=0.5\textwidth]{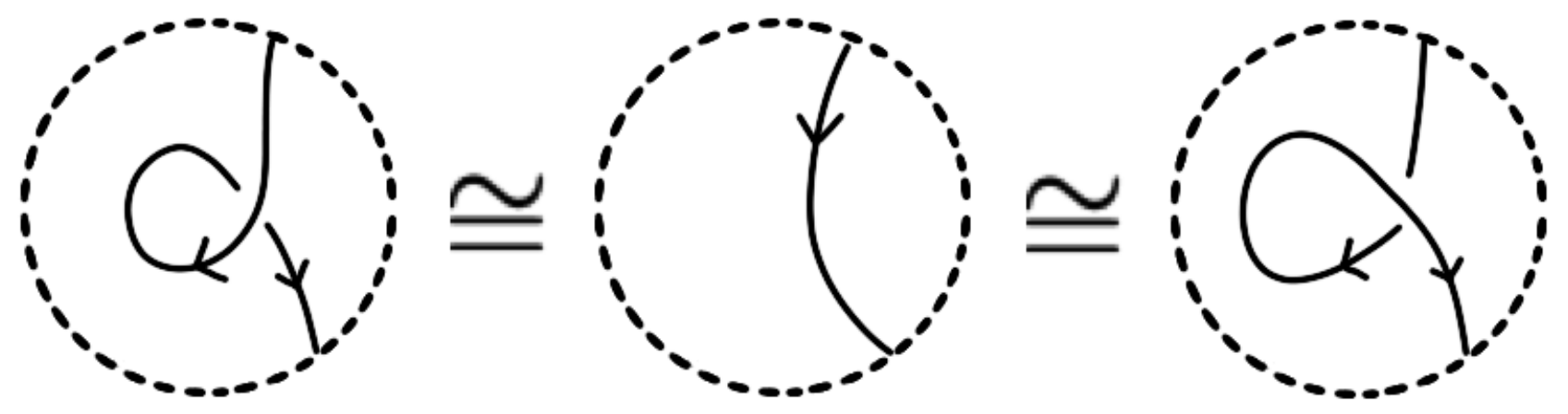}
        \caption{A local deformation of $D$}
        \label{fig:Dhenkei}
    \end{figure}

    The following theorem is called the mirror-image formula.

    \begin{thm}[Mirror-image formula {\cite[Proposition 16.1]{Lickorish1997Introduction}}]
    \label{thm:mirror_HOMFLY}
        Let $L^*$ denote the mirror image of an oriented link $L$.
        Then
        \[
            P(L^*;y,z)=P(L;y^{-1},z).
        \]
    \end{thm}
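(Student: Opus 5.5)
The plan is to deduce the formula from the uniqueness part of Theorem~\ref{thm:HOMFLY_existence}. For an oriented link diagram $D$, let $D^*$ be the diagram obtained by changing every crossing of $D$, that is, reflecting in the projection plane. Then $D^*$ is a diagram of the mirror image $L^*$ whenever $D$ is a diagram of $L$. Define
\[
Q(D;y,z):=P(D^*;y^{-1},z).
\]
This is an element of $\ZZ[y^{\pm1},z^{\pm1}]$, since $y\mapsto y^{-1}$, $z\mapsto z$ extends to a ring automorphism of the Laurent polynomial ring. I will check that $Q$ satisfies the two characterizing conditions of the HOMFLY polynomial. Uniqueness then gives $Q(D;y,z)=P(D;y,z)$ for every diagram $D$, and substituting $y\mapsto y^{-1}$ yields $P(L^*;y,z)=P(L;y^{-1},z)$.

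\emph{Normalization.} If $D$ is a diagram of the unknot, then $D^*$ is also a diagram of the unknot. Hence $Q(D;y,z)=P(\bigcirc;y^{-1},z)=1$.

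\emph{Skein relation.} Let $(D_+,D_-,D_0)$ be a skein triple at a crossing $P$. Reversing all crossings keeps the orientations of the two tangent vectors at $P$ but swaps which arc is over. The ordered pair (over, under) is therefore reversed, so a positive crossing becomes negative and vice versa. The smoothing is unchanged, because it does not see over/under information. It follows that $(D_-^*,D_+^*,D_0^*)$ is a skein triple with $D_-^*$ as the positive diagram. The skein relation for $P$ gives
\[
yP(D_-^*;y,z)+y^{-1}P(D_+^*;y,z)=zP(D_0^*;y,z).
\]
Applying the automorphism $y\mapsto y^{-1}$ turns this into
\[
y^{-1}Q(D_-;y,z)+yQ(D_+;y,z)=zQ(D_0;y,z),
\]
which is exactly the skein relation for $Q$.

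\emph{Main obstacle.} The delicate point is to make sure that uniqueness in Theorem~\ref{thm:HOMFLY_existence} really applies to $Q$ as a function on diagrams. It requires $Q$ to be well defined on the same class of objects and to satisfy the relations for all diagrams. Both hold because mirroring is a bijection on oriented diagrams that sends skein triples to skein triples, as checked above. For completeness I would also note that $Q$ is Reidemeister invariant, since mirroring carries each Reidemeister move to a Reidemeister move, although uniqueness alone already forces $Q=P$.
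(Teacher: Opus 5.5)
Your proposal is correct and matches the paper's proof essentially step for step. Like the paper, you define $Q(D;y,z):=P(D^*;y^{-1},z)$, check the normalization, and note that mirroring swaps $D_+$ and $D_-$ while fixing $D_0$. You then conclude $Q=P$ from the uniqueness in Theorem~\ref{thm:HOMFLY_existence}. Keep your remark that $Q$ is Reidemeister invariant, and drop the claim that uniqueness alone suffices: the standard uniqueness argument applies to link invariants, not to arbitrary functions on diagrams, and the paper leaves this point implicit.
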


    \begin{proof}
        For an oriented link diagram $D$, let $D^*$ denote its mirror image and define
            \[
                Q(D;y,z):=P(D^*;y^{-1},z).
            \]
        Then $Q(\bigcirc;y,z)=1$.

        Taking the mirror image of a skein triple $(D_+,D_-,D_0)$ interchanges the positive and negative crossings.
        Hence,
            \[
                (D_+)^*=(D^*)_-,
                \qquad
                (D_-)^*=(D^*)_+,
                \qquad
                (D_0)^*=(D^*)_0.
            \]
        It follows from the skein relation for $D^*$ that
            \[
                yQ(D_+;y,z)
                +y^{-1}Q(D_-;y,z)
                =
                zQ(D_0;y,z).
            \]
        By the uniqueness in Theorem~\ref{thm:HOMFLY_existence}, we have $Q(D;y,z)=P(D;y,z)$, and the result follows.
    \end{proof}
%----------------------------------------------
\section{The HOMFLY polynomials of torus links}
%----------------------------------------------
%-------------------------------------
\subsection{Definition of torus links}
%-------------------------------------

    \begin{defi}[Torus link]
        For $m\in\ZZ_{\geq2}$ and $n\in\ZZ$, define
            \[
                T(m,n) := \overline{(\sigma_1\sigma_2\cdots\sigma_{m-1})^n}.
            \]
        This link is called the $(m,n)$-torus link.

        We further define $T(-m,n)$ to be the mirror image of $T(m,n)$.
        More precisely, $T(-m,n)$ is the closure of the oriented diagram obtained by reversing every crossing in the standard closed braid diagram $(\sigma_1\cdots\sigma_{m-1})^n$.
        The orientation of $T(-m,n)$ is the one induced from the standard closed braid diagram of $T(m,n)$ under this mirror operation.
        In other words, the components of $T(-m,n)$ are oriented so as to correspond to the mirrored components of $T(m,n)$.
        Throughout this paper, equivalence of such links means equivalence as oriented links with these prescribed orientations.
    \end{defi}

    \begin{rem}[{\cite[Section 5.1]{Adams2004KnotBook}}]
        If $n\neq0$, then $T(m,n)$ has $\gcd(m,|n|)$ components.
        In particular, $T(m,n)$ is a knot if $\gcd(m,|n|)=1$.
        If $n=0$, then $T(m,0)$ is the trivial link with $m$ components.
    \end{rem}

    \begin{thm}[{\cite[Section 5.1]{Adams2004KnotBook}}]
    \label{thm:torus_symmetry}
        For $m,n\geq2$, after choosing compatible orientations, we have
            \[
                T(m,n)\cong T(n,m).
            \]
    \end{thm}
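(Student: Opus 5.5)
The plan is to realize both $T(m,n)$ and $T(n,m)$ as the same curve on a standardly embedded torus $\mathcal{T}\subset S^3$. Concretely, I would show that the closed braid $\overline{(\sigma_1\cdots\sigma_{m-1})^n}$ is isotopic to the $(m,n)$ torus curve on $\mathcal{T}$. Then I would use the symmetry of $S^3$ exchanging the two solid tori of the genus-one Heegaard splitting.

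\textbf{Step 1: identify the closed braid with a torus curve.} The closure of an $m$-strand braid lies in a solid torus $V=D^2\times S^1$ (a neighbourhood of the braid axis complement). Place the $m$ points $p_1,\dots,p_m$ on a circle of fixed radius in $D^2$. The braid $\sigma_1\sigma_2\cdots\sigma_{m-1}$ is isotopic, rel endpoints up to relabelling, to the braid in which every strand rotates by $2\pi/m$ along this circle. Here one strand passes over (or under) the others, and this can be drawn on the cylinder $\partial D_r\times[0,1]$ after a small isotopy; it is the standard picture of the generator of the centre-root. Hence $(\sigma_1\cdots\sigma_{m-1})^n$ is isotopic to the braid of total rotation $2\pi n/m$ lying on $\partial D_r\times[0,1]$. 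Its closure therefore lies on the torus $\partial D_r\times S^1$, meeting a meridian disk $m$ times and winding $n$ times in the meridional direction. That is, it is the curve of slope $(m,n)$, with each component oriented in the braid direction. I expect this step to be the main obstacle. The cleanest route is to verify that the cyclic-shift braid drawn on the cylinder has the diagram of $\sigma_1\cdots\sigma_{m-1}$ when projected: one strand crosses over all the others moving from the last position to the first, and the rest shift by one. Positivity of the crossings is checked against the convention fixed above.

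\textbf{Step 2: swap the roles of longitude and meridian.} Write $S^3=V\cup V'$ with $\partial V=\partial V'=\mathcal{T}$, where a meridian of $V$ is a longitude of $V'$ and vice versa. There is an orientation-preserving diffeomorphism of $S^3$ interchanging $V$ and $V'$. For example, viewing $S^3\subset\mathbb{C}^2$ as $|z_1|^2+|z_2|^2=1$, take the map $(z_1,z_2)\mapsto(z_2,z_1)$ composed with complex conjugation in one coordinate, so that the orientation is preserved. This map carries the $(m,n)$ curve to the $(n,m)$ curve, up to the sign convention fixed by the orientation choice. Since orientation-preserving diffeomorphisms of $S^3$ are isotopic to the identity, the two links are ambient isotopic.

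\textbf{Step 3: orientations.} Applying Step 1 with $(n,m)$ in place of $(m,n)$ identifies $T(n,m)$ with the $(n,m)$ curve. Comparing, the image of $T(m,n)$ agrees with $T(n,m)$ up to possibly reversing the orientation of all components simultaneously, or of individual components when $\gcd(m,n)>1$. The phrase ``after choosing compatible orientations'' in the statement absorbs exactly this. I would note that with the parametrisation $t\mapsto(e^{imt},e^{int})/\sqrt2$ all components are parallel and coherently oriented on both sides, so the compatible choice is to orient every component of $T(n,m)$ coherently as well. Since both $m,n\geq2$, no degenerate case arises.
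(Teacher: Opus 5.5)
The paper does not prove this statement; it quotes it from Adams. Your overall strategy is the standard argument: identify the closed braid with a torus curve, then exchange the two solid tori of the genus-one Heegaard splitting. But Step 2 has a concrete error, and it sits exactly where the content of the theorem lies.

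\textbf{The map in Step 2 is orientation-reversing.} The plain swap $(z_1,z_2)\mapsto(z_2,z_1)$ is already orientation-preserving on $S^3$.
\begin{itemize}
\item As a real map of $\mathbb{R}^4$ it is the even permutation $(x_1,y_1,x_2,y_2)\mapsto(x_2,y_2,x_1,y_1)$; more generally, any $\mathbb{C}$-linear map has positive real determinant.
\item It preserves the unit ball, hence the outward normal, so its restriction to $S^3$ preserves orientation.
\item It does reverse the orientation of the Heegaard torus, but it also exchanges the sides $V$ and $V'$, and the two reversals cancel.
\end{itemize}
Your composite with complex conjugation in one coordinate is therefore orientation-\emph{reversing}. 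It sends $(e^{imt},e^{int})/\sqrt2$ to $(e^{int},e^{-imt})/\sqrt2$ (or its reverse, depending on which coordinate you conjugate). That is the $(n,-m)$ curve, i.e.\ the mirror image of the $(n,m)$ curve. So for the map you wrote down, the appeal to ``orientation-preserving diffeomorphisms of $S^3$ are isotopic to the identity'' is unavailable, and the claimed image is wrong.

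\textbf{The hedges do not absorb the sign.} Neither ``up to the sign convention'' in Step 2 nor ``choosing compatible orientations'' in Step 3 covers this discrepancy.
\begin{itemize}
\item Reversing every component turns the $(m,n)$ curve into the $(-m,-n)$ curve. That curve is isotopic to the original via the orientation-preserving map $(z_1,z_2)\mapsto(\bar z_1,\bar z_2)$.
\item Flipping the sign of a single coordinate instead produces the mirror image. Torus knots $T(m,n)$ with $m,n\geq 2$ coprime are chiral; the paper's own example is $T(-3,2)\not\cong T(3,2)$. So no reorientation repairs it.
\end{itemize}

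\textbf{The fix.} Drop the conjugation. The plain swap carries the $(m,n)$ curve onto the $(n,m)$ curve, orientations included. Step 1 must use one fixed identification of the braid solid torus with $V$, so that the same handedness sign appears for $(m,n)$ and for $(n,m)$. Then you get $T(m,n)\cong T(n,m)$, at worst after composing with $(z_1,z_2)\mapsto(\bar z_1,\bar z_2)$ to correct a global reversal. With that correction the rest of your outline is sound, including Step 1 via the rotation braid $\sigma_1\cdots\sigma_{m-1}$ and the coherent orientation of all components.
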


%------------------------------------------------
\subsection{\texorpdfstring{The HOMFLY polynomials of $T(2,\pm n)$}{The HOMFLY polynomials of T(2,+/-n)}}
%------------------------------------------------

    We begin with the following elementary lemma.

    \begin{lem}
    \label{lem:3koukan}
        Let $K$ be a field of characteristic zero, and consider the quadratic equation
            \[
                X^2-pX+q=0
            \]
        over $K$.
        Let $\Omega$ be its splitting field and let $\lambda_1,\lambda_2\in\Omega$ be its roots.
        If $\lambda_1\neq\lambda_2$, then the general solution of the
        three-term recurrence
            \[
                a_{n+2}-pa_{n+1}+qa_n=0
            \]
        is
            \[
                a_n
                =
                \frac{\lambda_1^n-\lambda_2^n}
                     {\lambda_1-\lambda_2}a_1
                -
                \lambda_1\lambda_2
                \frac{\lambda_1^{n-1}-\lambda_2^{n-1}}
                     {\lambda_1-\lambda_2}a_0.
            \]
    \end{lem}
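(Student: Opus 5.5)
Since $\lambda_1,\lambda_2$ are the roots of $X^2-pX+q$, Vieta's formulas give $\lambda_1+\lambda_2=p$ and $\lambda_1\lambda_2=q$. Set
\[
    u_n:=\frac{\lambda_1^n-\lambda_2^n}{\lambda_1-\lambda_2}\in\Omega ,
\]
which is defined for every integer $n$ when $q\neq0$, and for $n\geq0$ in general. Note that the term $\lambda_1\lambda_2u_{n-1}$ appearing in the statement is a polynomial in $\lambda_1,\lambda_2$ for $n\geq1$, and equals $-1$ at $n=0$ even if $q=0$, since $\lambda_1\lambda_2u_{-1}=-1$ when interpreted via $\lambda_1\lambda_2(\lambda_1^{-1}-\lambda_2^{-1})=\lambda_2-\lambda_1$. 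In the proof I will therefore read the formula as $a_n=u_na_1-q\,u_{n-1}a_0$, with $q\,u_{-1}=-1$.

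The plan has three steps.

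\textbf{Each geometric sequence solves the recurrence.} Because $\lambda_i^2-p\lambda_i+q=0$, multiplying by $\lambda_i^n$ gives
\[
    \lambda_i^{n+2}-p\lambda_i^{n+1}+q\lambda_i^n=0 .
\]
So $(\lambda_1^n)$ and $(\lambda_2^n)$ are solutions. The recurrence is linear, so the solution set is a vector space, and $(u_n)$ and $(q\,u_{n-1})$ (a $K$-multiple of a shifted solution) are solutions as well. Hence the right-hand side $b_n:=u_na_1-q\,u_{n-1}a_0$ satisfies the recurrence.

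\textbf{The initial values agree.} We have $u_0=0$, $u_1=1$ and $q\,u_{-1}=-1$, so $b_0=0\cdot a_1+a_0=a_0$ and $b_1=a_1-q\cdot0\cdot a_0=a_1$.

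\textbf{Uniqueness and generality.} The recurrence determines $a_{n+2}$ from $a_{n+1}$ and $a_n$, so by induction on $n$ any solution is determined by $(a_0,a_1)$. Therefore $a_n=b_n$ for all $n\geq0$. Conversely, every choice of $(a_0,a_1)$ produces a solution of this form, which shows the formula is the general solution.

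The hypothesis $\lambda_1\neq\lambda_2$ is used only so that the denominator is nonzero. I would also remark that $u_n=\sum_{k=0}^{n-1}\lambda_1^k\lambda_2^{n-1-k}$ is symmetric in the roots, hence lies in $K$, so the solution stays in $K$ when $a_0,a_1\in K$. The only real obstacle is the bookkeeping at $n=0$, where $u_{-1}$ appears. I would handle it as in the first paragraph, by setting $q\,u_{-1}=-1$ (equivalently, rewriting $\lambda_1\lambda_2u_{n-1}$ as the symmetric expression $\lambda_1\lambda_2\cdot\frac{\lambda_1^{n-1}-\lambda_2^{n-1}}{\lambda_1-\lambda_2}$ and evaluating it at $n=0$ as $-1$). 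Alternatively, one can verify $n=0,1$ directly and apply the recurrence for $n\geq2$.
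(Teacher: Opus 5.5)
Your proof is correct, but it runs in the opposite direction from the paper's. The paper derives the formula. It factors the recurrence as $a_{n+2}-\lambda_1a_{n+1}=\lambda_2(a_{n+1}-\lambda_1a_n)$ and $a_{n+2}-\lambda_2a_{n+1}=\lambda_1(a_{n+1}-\lambda_2a_n)$. It then iterates these first-order relations to get $a_{n+1}-\lambda_1a_n=\lambda_2^n(a_1-\lambda_1a_0)$ and $a_{n+1}-\lambda_2a_n=\lambda_1^n(a_1-\lambda_2a_0)$, and subtracts to eliminate $a_{n+1}$. You instead take the formula as given, observe that it is a combination of the geometric solutions $\lambda_1^n,\lambda_2^n$, match the values at $n=0,1$, and conclude by uniqueness of solutions with prescribed initial data.

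The paper's route needs no advance knowledge of the answer and shows directly that every solution has this form. It is also the template the paper reuses for the quartic case in Lemma~\ref{lem:5koukan_sol}. Your route is quicker to check and makes both halves of ``general solution'' explicit.

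Your care at $n=0$ is a real improvement over the paper. The paper's elimination actually yields the coefficient $\frac{\lambda_1^n\lambda_2-\lambda_1\lambda_2^n}{\lambda_1-\lambda_2}$, which equals $-1$ at $n=0$ with no negative powers. By contrast, the displayed form $\lambda_1\lambda_2\frac{\lambda_1^{n-1}-\lambda_2^{n-1}}{\lambda_1-\lambda_2}$ is literally undefined at $n=0$ when $q=0$. Your convention $q\,u_{-1}=-1$ is exactly this polynomial value.

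The one phrase to tighten is ``a $K$-multiple of a shifted solution''. When $q=0$ it does not by itself give the $n=0$ instance of the recurrence for $(q\,u_{n-1})$. With your convention, however, that instance reads $q u_1-pq u_0-q=0$, which is immediate.
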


    \begin{proof}
        We perform the following calculations over $\Omega$.
        Using $\lambda_1$ and $\lambda_2$, the recurrence can be rewritten as
            \[
                \begin{matrix}
                    a_{n+2}-\lambda_1a_{n+1}
                    =
                    \lambda_2(a_{n+1}-\lambda_1a_n),\\
                    a_{n+2}-\lambda_2a_{n+1}
                    =
                    \lambda_1(a_{n+1}-\lambda_2a_n).
                \end{matrix}
            \]
        It follows that
            \[
                \begin{matrix}
                    a_{n+1}-\lambda_1a_n
                    =
                    \lambda_2^n(a_1-\lambda_1a_0),\\
                    a_{n+1}-\lambda_2a_n
                    =
                    \lambda_1^n(a_1-\lambda_2a_0).
                \end{matrix}
            \]
        Subtracting these two equations gives
            \[
                a_n
                =
                \frac{\lambda_1^n-\lambda_2^n}
                     {\lambda_1-\lambda_2}a_1
                -
                \lambda_1\lambda_2
                \frac{\lambda_1^{n-1}-\lambda_2^{n-1}}
                     {\lambda_1-\lambda_2}a_0.
            \]
        The right-hand side is symmetric in $\lambda_1$ and $\lambda_2$, and hence can be expressed in terms of $p$ and $q$.
        It may therefore be regarded as an expression over $K$.
    \end{proof}

    Consider the polynomial
        \[
            h_n(\lambda_1,\lambda_2) = \sum_{r=0}^n\lambda_1^{n-r}\lambda_2^r.
        \]
    Since $h_n$ is symmetric in $\lambda_1$ and $\lambda_2$, it can be
    expressed in terms of the elementary symmetric polynomials
        \[
            e_1=\lambda_1+\lambda_2,
            \qquad
            e_2=\lambda_1\lambda_2
        \]
    in the form
        \[
            h_n(\lambda_1,\lambda_2)
            =
            \sum_{p,q\geq0}C_{p,q}e_1^pe_2^q.
        \]

    Since $h_n$, $e_1$, and $e_2$ are homogeneous of degrees $n$, $1$, and $2$, respectively, we have $C_{p,q}=0$ whenever $p+2q\neq n$.
    Therefore,
        \[
            h_n(\lambda_1,\lambda_2)
            =
            \sum_{p+2q=n}C_{p,q}e_1^pe_2^q
            =
            \sum_{q=0}^{\left\lfloor \frac{n}{2}\right\rfloor}
            C'_{n,q}e_1^{n-2q}e_2^q.
        \]

    We now write
        \[
            H_n(e_1,e_2)
            :=
            h_n(\lambda_1,\lambda_2)
            =
            \sum_{j=0}^{\left\lfloor \frac{n}{2}\right\rfloor}
            C_{n,j}e_1^{n-2j}e_2^j
        \]
    and determine the coefficients $C_{n,j}$.

    First,
        \[
            H_0(e_1,e_2)=1,
            \qquad
            H_1(e_1,e_2)=e_1,
        \]
    and hence
        \[
            C_{0,0}=C_{1,0}=1.
        \]

    Moreover,
        \begin{align*}
            \lambda_1^{n+2}-\lambda_2^{n+2}
            &=
            (\lambda_1+\lambda_2)
            (\lambda_1^{n+1}-\lambda_2^{n+1})
            -
            \lambda_1\lambda_2
            (\lambda_1^n-\lambda_2^n)\\
            &=
            e_1(\lambda_1^{n+1}-\lambda_2^{n+1})
            -
            e_2(\lambda_1^n-\lambda_2^n).
        \end{align*}
    It follows that $H_n$ satisfies
        \[
            H_{n+2}(e_1,e_2) - e_1H_{n+1}(e_1,e_2) + e_2H_n(e_1,e_2) = 0.
        \]

    Therefore,
        \begin{align*}
            &H_{n+2}(e_1,e_2) - e_1H_{n+1}(e_1,e_2) + e_2H_n(e_1,e_2)\\
            &=
            \sum_{j=0}^{\left\lfloor \frac{n+2}{2}\right\rfloor}
            C_{n+2,j}e_1^{n+2-2j}e_2^j
            -
            \sum_{j=0}^{\left\lfloor \frac{n+1}{2}\right\rfloor}
            C_{n+1,j}e_1^{n+2-2j}e_2^j
            +
            \sum_{j=0}^{\left\lfloor \frac{n}{2}\right\rfloor}
            C_{n,j}e_1^{n-2j}e_2^{j+1}\\
            &=0.
        \end{align*}
    Comparing coefficients, we obtain
        \[
            C_{n+2,j+1}-C_{n+1,j+1}+C_{n,j}=0,
            \qquad
            C_{n+2,0}=C_{n+1,0}.
        \]
    In particular,
        \[
            C_{n,0}=1,
            \qquad
            C_{2n,n}=(-1)^n.
        \]

    \begin{prop}
    \label{prop:pascal}
        Under the initial condition $C_{n,0}=1$ and the boundary condition
            \[
                C_{n,j}=0
                \qquad
                (j<0\text{ or }2j>n),
            \]
        the solution of the recurrence
            \[
                C_{n+2,j+1}-C_{n+1,j+1}+C_{n,j}=0
            \]
        is
            \[
                C_{n,j}=(-1)^j\binom{n-j}{j}.
            \]
    \end{prop}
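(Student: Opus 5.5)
The plan is to verify directly that $D_{n,j}:=(-1)^j\binom{n-j}{j}$ satisfies the recurrence, the initial condition and the boundary condition, and then to argue that these three requirements determine $C_{n,j}$ uniquely. For uniqueness, rewrite the recurrence as
\[
C_{n+2,j+1}=C_{n+1,j+1}-C_{n,j}.
\]
Then every entry with $j\geq 1$ is determined by entries of smaller first index. Together with $C_{n,0}=1$ for all $n\geq 0$ and the prescribed zeros outside $0\leq 2j\leq n$, induction on $n$ shows that at most one array satisfies all the conditions. The values $C_{0,0}=C_{1,0}=1$ are already covered by the initial condition.

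For existence, first check the initial condition: $D_{n,0}=\binom{n}{0}=1$. Next, the boundary condition. If $j<0$, we adopt the convention that the binomial coefficient vanishes. If $2j>n$ with $j\geq 0$, then $n-j<j$, so $\binom{n-j}{j}=0$. Some care is needed when $n-j<0$, which happens when $j>n$. There we must use the convention $\binom{m}{j}=0$ for $m<j$ and $m\geq 0$, and for $m<0$ we simply define the binomial to be zero in this range. I will state this convention explicitly so that the formula matches the boundary condition.

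The main step is the recurrence. Substituting $D$ and dividing by $(-1)^{j+1}$ turns the required identity into
\[
\binom{n+1-j}{j+1}-\binom{n-j}{j+1}-\binom{n-j}{j}=0.
\]
This is exactly Pascal's rule $\binom{m+1}{j+1}=\binom{m}{j+1}+\binom{m}{j}$ with $m=n-j$. The sign bookkeeping is $(-1)^{j+1}$ for the first two terms and $-(-1)^j=(-1)^{j+1}$ for the third, so all three terms carry the common factor $(-1)^{j+1}$.

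The only delicate point is the edge cases. These are the indices where the recurrence involves entries on or outside the boundary, for instance $j+1>(n+2)/2$ or $j<0$. I expect this to be the main (if minor) obstacle: making sure Pascal's rule remains valid there under the zero conventions. For $j\geq 0$ and $m=n-j\geq 0$, Pascal's rule holds for all $j$, with both sides zero when $j>m$. For $j=-1$ the recurrence reads $C_{n+2,0}-C_{n+1,0}=0$, which is consistent with $C_{n,0}=1$. For $j\leq -2$ every term is zero. If $n-j<0$, all the relevant entries lie in the region $2j>n$ and are zero by the boundary condition. This completes the plan.
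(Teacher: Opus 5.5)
Your proposal is correct and takes essentially the paper's route: the paper also just substitutes $(-1)^j\binom{n-j}{j}$ into the recurrence, factoring out $(-1)^j\binom{n-j}{j}$ via the ratios $\frac{n-j+1}{j+1}$ and $\frac{n-2j}{j+1}$ rather than quoting Pascal's rule directly. Your uniqueness argument and your explicit convention $\binom{m}{j}=0$ for $m<0$ fill in points the paper leaves implicit; without that convention, for example, $(-1)^1\binom{-1}{1}=1$ would violate the boundary condition at $(n,j)=(0,1)$.
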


    \begin{proof}
        Set
            \[
                C_{n,j}=(-1)^j\binom{n-j}{j}.
            \]
        For $j=0$, we have
            \[
                C_{n,0}=\binom{n}{0}=1,
            \]
        so the initial condition is satisfied.
        Furthermore,
            \begin{align*}
                C_{n+2,j+1}-C_{n+1,j+1}+C_{n,j}
                &=
                \left(
                    -\frac{n-j+1}{j+1}
                    +\frac{n-2j}{j+1}
                    +1
                \right)
                (-1)^j\binom{n-j}{j}\\
                &=0.
            \end{align*}
        This proves the proposition.
    \end{proof}

    Consequently,
        \[
            H_n(e_1,e_2)
            =
            \sum_{j=0}^{\left\lfloor \frac{n}{2}\right\rfloor}
            (-1)^j
            \binom{n-j}{j}
            e_1^{n-2j}e_2^j.
        \]
    Lemma~\ref{lem:3koukan}, together with the relations between the roots and coefficients of the quadratic equation, now gives the following result.

    \begin{lem}
    \label{lem:3koukan2}
        Let $n\geq2$.
        Suppose that the quadratic equation
            \[
                X^2-pX+q=0
            \]
        over a field $K$ of characteristic zero has no repeated root.
        Then the general solution of the recurrence
            \[
                a_{n+2}-pa_{n+1}+qa_n=0
            \]
        is
            \[
                a_n = H_{n-1}(p,q)a_1 - qH_{n-2}(p,q)a_0.
            \]
    \end{lem}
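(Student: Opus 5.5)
The plan is to combine Lemma~\ref{lem:3koukan} with the identification of $h_n$ in terms of elementary symmetric polynomials carried out just above. Let $\lambda_1,\lambda_2$ be the distinct roots of $X^2-pX+q=0$ in the splitting field $\Omega$. By the relations between roots and coefficients,
\[
    e_1=\lambda_1+\lambda_2=p,
    \qquad
    e_2=\lambda_1\lambda_2=q .
\]
Lemma~\ref{lem:3koukan} already gives the general solution
\[
    a_n=\frac{\lambda_1^n-\lambda_2^n}{\lambda_1-\lambda_2}a_1-\lambda_1\lambda_2\frac{\lambda_1^{n-1}-\lambda_2^{n-1}}{\lambda_1-\lambda_2}a_0 .
\]
It therefore suffices to rewrite the two quotients as values of the polynomials $H_m$.

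The key step is the telescoping identity
\[
    \lambda_1^{m+1}-\lambda_2^{m+1}=(\lambda_1-\lambda_2)\sum_{r=0}^{m}\lambda_1^{m-r}\lambda_2^{r},
\]
valid for every $m\ge 0$. Since $\lambda_1\neq\lambda_2$, we may divide by $\lambda_1-\lambda_2$ and obtain
\[
    \frac{\lambda_1^{m+1}-\lambda_2^{m+1}}{\lambda_1-\lambda_2}=h_m(\lambda_1,\lambda_2)=H_m(e_1,e_2)=H_m(p,q).
\]
Applying this with $m=n-1$ and $m=n-2$ gives
\[
    a_n=H_{n-1}(p,q)a_1-qH_{n-2}(p,q)a_0 .
\]
The hypothesis $n\ge 2$ is used only to guarantee $n-2\ge 0$, so that $H_{n-2}$ is defined by the formula above.

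Finally, $H_m(p,q)$ has integer coefficients; explicitly,
\[
    H_m(p,q)=\sum_{j}(-1)^j\binom{m-j}{j}p^{m-2j}q^j .
\]
Hence the resulting expression lies in $K$, as required. Every step is short, so there is no real obstacle. The only point needing care is that $H_m$ was defined as a polynomial in $e_1,e_2$ via symmetric-function theory, and we must check that evaluating it at $(p,q)$ really equals $h_m(\lambda_1,\lambda_2)$. This is immediate because $H_m(e_1,e_2)=h_m(\lambda_1,\lambda_2)$ is an identity of polynomials in $\lambda_1,\lambda_2$, so it can be specialized to the actual roots in $\Omega$.
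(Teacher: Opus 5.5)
Your proposal is correct and follows essentially the same route as the paper, which obtains this lemma directly from Lemma~\ref{lem:3koukan} using $\lambda_1+\lambda_2=p$, $\lambda_1\lambda_2=q$ and the identification $(\lambda_1^{m+1}-\lambda_2^{m+1})/(\lambda_1-\lambda_2)=h_m(\lambda_1,\lambda_2)=H_m(e_1,e_2)$. The only difference is that you spell out the telescoping identity and the specialization of the polynomial identity to the roots, both of which the paper leaves implicit.
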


    \begin{thm}
    \label{thm:T2_HOM}
        For $n\geq2$,
            \[
                P(T(2,n);y,z) = H_{n-1}(y^{-1}z,y^{-2}) - (y^{-1}+y^{-3})z^{-1} H_{n-2}(y^{-1}z,y^{-2}).
            \]
    \end{thm}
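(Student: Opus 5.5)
Set $a_n := P(T(2,n);y,z)$ for $n\ge 0$, where $T(2,n)=\overline{\sigma_1^n}$ with $\sigma_1\in B_2$. The plan is to derive a three-term recurrence for $a_n$ from the skein relation, compute $a_0$ and $a_1$, and then invoke Lemma~\ref{lem:3koukan2}.

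\textbf{Recurrence.} Apply the skein relation at the last crossing of the closed braid diagram $\overline{\sigma_1^{n+2}}$. The triple is
\[
D_+=\overline{\sigma_1^{n+2}},\qquad D_-=\overline{\sigma_1^{n}},\qquad D_0=\overline{\sigma_1^{n+1}},
\]
since changing $\sigma_1$ to $\sigma_1^{-1}$ cancels against a neighbouring $\sigma_1$, and smoothing a crossing of a braid deletes that generator. This gives
\[
y\,a_{n+2}+y^{-1}a_n=z\,a_{n+1},
\]
that is,
\[
a_{n+2}-y^{-1}z\,a_{n+1}+y^{-2}a_n=0.
\]
So in Lemma~\ref{lem:3koukan2} we take $p=y^{-1}z$ and $q=y^{-2}$. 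The computation takes place in the field $K=\QQ(y,z)$, and the resulting polynomial identity holds in $\ZZ[y^{\pm1},z^{\pm1}]$.

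\textbf{Initial values.} First, $a_1=P(\overline{\sigma_1})=1$, because $\overline{\sigma_1}$ is the unknot by Markov destabilization (Theorem~\ref{thm:Markov}(2)). Second, $a_0=P(\overline{e})$, where $\overline{e}$ is the two-component trivial link. By Proposition~\ref{prop:unknot_union} with $D=\bigcirc$,
\[
a_0=z^{-1}(y+y^{-1}).
\]

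\textbf{Nondegeneracy.} Lemma~\ref{lem:3koukan2} requires that $X^2-y^{-1}zX+y^{-2}$ have distinct roots. Its discriminant is $y^{-2}(z^2-4)$, which is nonzero in $\QQ(y,z)$, so the hypothesis holds.

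\textbf{Substitution.} Lemma~\ref{lem:3koukan2} then gives
\[
a_n=H_{n-1}(p,q)\,a_1-q\,H_{n-2}(p,q)\,a_0 .
\]
The coefficient of the second term is
\[
q\,a_0=y^{-2}z^{-1}(y+y^{-1})=(y^{-1}+y^{-3})z^{-1},
\]
which is exactly the stated formula.

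The main point needing care is the identification of the skein triple. The relation is applied with the changed crossing as the positive one, so $y$ multiplies $a_{n+2}$. One must also check that the orientations are the braid orientations, so that all crossings of $\sigma_1^n$ are positive and the smoothing is again a closed braid. Given the paper's sign conventions this is routine.
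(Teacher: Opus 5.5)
Your proof is correct and follows the paper's argument step for step: the same skein recurrence $a_{n+2}-y^{-1}z\,a_{n+1}+y^{-2}a_n=0$, the same initial values $a_0=(y+y^{-1})z^{-1}$ and $a_1=1$, the same discriminant check $y^{-2}(z^2-4)\neq 0$, and the same application of Lemma~\ref{lem:3koukan2} with $p=y^{-1}z$, $q=y^{-2}$. Your additional remarks, on identifying the skein triple in the closed braid and on working over $\QQ(y,z)$, are sound details that the paper leaves implicit.
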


    \begin{proof}
        Set
            \[
                a_n:=P(T(2,n);y,z).
            \]
        By Proposition~\ref{prop:unknot_union},
            \[
                a_0 = P(T(2,0);y,z) = P(\bigcirc\cup\bigcirc;y,z) = z^{-1}(y+y^{-1}),
            \]
        while
            \[
                a_1 = P(T(2,1);y,z) = P(\bigcirc;y,z) = 1.
            \]

        Since
            \[
                T(2,n+2)=\overline{\sigma_1^{n+2}},
            \]
        the skein relation gives
            \[
                yP(\overline{\sigma_1^{n+2}};y,z) + y^{-1}P(\overline{\sigma_1^n};y,z) = zP(\overline{\sigma_1^{n+1}};y,z).
            \]
        Hence,
            \[
                a_{n+2} - y^{-1}za_{n+1} + y^{-2}a_n = 0.
            \]

        Let
            \[
                f(X)=X^2-y^{-1}zX+y^{-2}.
            \]
        Its discriminant with respect to $X$ is
            \[
                \operatorname{Disc}_X(f) = y^{-2}z^2-4y^{-2} \neq 0
            \]
        in $\QQ(y,z)$.

        Therefore, for $n\geq2$, Lemma~\ref{lem:3koukan2} gives
            \begin{align*}
                a_n
                &=
                H_{n-1}(y^{-1}z,y^{-2})a_1 - y^{-2}H_{n-2}(y^{-1}z,y^{-2})a_0\\
                &=
                H_{n-1}(y^{-1}z,y^{-2}) - (y^{-1}+y^{-3})z^{-1} H_{n-2}(y^{-1}z,y^{-2}).
            \end{align*}
    \end{proof}

    \begin{ex}
    \label{ex:HOM_trefoil}
        Since
        \begin{align*}
            H_1(y^{-1}z,y^{-2})
                &=y^{-1}z,\\
            H_2(y^{-1}z,y^{-2})
                &=y^{-2}z^2-y^{-2},
        \end{align*}
        the HOMFLY polynomial of $T(2,3)$, that is, the trefoil knot, is
        \begin{align*}
            P(T(2,3);y,z)
            &=
            H_2(y^{-1}z,y^{-2}) - (y^{-1}+y^{-3})z^{-1} H_1(y^{-1}z,y^{-2})\\
            &=
            y^{-2}z^2-2y^{-2}-y^{-4}.
        \end{align*}
    \end{ex}

    Theorems~\ref{thm:T2_HOM} and~\ref{thm:mirror_HOMFLY} give the corresponding formula for negative powers.

    \begin{thm}
    \label{thm:T2_negative_HOM}
        For $n\geq2$,
            \[
                P(T(2,-n);y,z) = H_{n-1}(yz,y^2) - (y^3+y)z^{-1}H_{n-2}(yz,y^2).
            \]
    \end{thm}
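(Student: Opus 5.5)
The plan is to obtain the formula directly from Theorem~\ref{thm:T2_HOM} by the mirror-image formula, Theorem~\ref{thm:mirror_HOMFLY}. The first step is to identify $T(2,-n)$ with the mirror image of $T(2,n)$ as oriented links. By definition $T(2,-n)=\overline{\sigma_1^{-n}}$. Reversing every crossing of the standard closed braid diagram $\overline{\sigma_1^{n}}$ replaces each $\sigma_1$ by $\sigma_1^{-1}$ and keeps the braid orientation of the strands. Hence $\overline{\sigma_1^{-n}}$ is exactly the oriented mirror image $T(2,n)^*$. This is the same mirror convention used in the definition of $T(-m,n)$, so here $T(2,-n)=T(-2,n)$ as oriented links.

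Applying Theorem~\ref{thm:mirror_HOMFLY} then gives $P(T(2,-n);y,z)=P(T(2,n);y^{-1},z)$. We substitute $y\mapsto y^{-1}$ in Theorem~\ref{thm:T2_HOM}. The arguments of $H$ become $(y^{-1})^{-1}z=yz$ and $(y^{-1})^{-2}=y^2$. The coefficient $(y^{-1}+y^{-3})z^{-1}$ becomes $(y+y^3)z^{-1}$. This yields
\[
P(T(2,-n);y,z)=H_{n-1}(yz,y^2)-(y^3+y)z^{-1}H_{n-2}(yz,y^2),
\]
which is the claimed formula.

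As an independent check, or as an alternative proof avoiding the mirror theorem, we can rerun the argument of Theorem~\ref{thm:T2_HOM} with $b_n:=P(\overline{\sigma_1^{-n}};y,z)$. The skein relation applied at one crossing of $\overline{\sigma_1^{-(n+2)}}$ gives $yb_n+y^{-1}b_{n+2}=zb_{n+1}$, that is, $b_{n+2}-yzb_{n+1}+y^2b_n=0$. The initial values are $b_0=z^{-1}(y+y^{-1})$ and $b_1=1$. The discriminant $y^2z^2-4y^2$ is nonzero, so Lemma~\ref{lem:3koukan2} with $p=yz$ and $q=y^2$ gives $b_n=H_{n-1}(yz,y^2)-y^2z^{-1}(y+y^{-1})H_{n-2}(yz,y^2)$. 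This agrees with the display above.

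The only point needing care is the orientation bookkeeping in the first step, namely that the mirror of the closed braid diagram is again a closed braid with the same strand orientations. Everything after that is substitution.
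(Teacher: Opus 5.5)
Your proof is correct and is essentially the paper's own argument: identify $T(2,-n)=\overline{\sigma_1^{-n}}$ with the mirror image of $T(2,n)$, then apply Theorem~\ref{thm:mirror_HOMFLY} to Theorem~\ref{thm:T2_HOM} by substituting $y\mapsto y^{-1}$. Your supplementary direct computation is also valid and confirms the formula independently of the mirror-image theorem; it uses the recurrence $b_{n+2}-yzb_{n+1}+y^2b_n=0$ with $b_0=(y+y^{-1})z^{-1}$ and $b_1=1$.
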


    \begin{proof}
        The link $T(2,-n)$ is the mirror image of $T(2,n)$.
        The result therefore follows by applying Theorem~\ref{thm:mirror_HOMFLY} to Theorem~\ref{thm:T2_HOM}.
    \end{proof}
%------------------------------------------------
\subsection{\texorpdfstring{The HOMFLY polynomials of $T(3,n)$}{The HOMFLY polynomials of T(3,n)}}
%------------------------------------------------

    As in the computation of the HOMFLY polynomials of $T(2,n)$, we begin by solving a five-term recurrence.
    Consider the fourth-order linear recurrence
        \[
            x_{n+4}-sx_{n+3}+tx_{n+2}-ux_{n+1}+vx_n=0.
        \]
    A recurrence of this form will arise in the computation of $T(3,n)$.

    Throughout the following discussion, let $K$ be a field of
    characteristic zero.
    Define
        \[
            f(X)=X^4-sX^3+tX^2-uX+v\in K[X].
        \]
    Let $\Omega$ be the splitting field of $f$, and write
        \[
            f(X)=\prod_{i=1}^4(X-\lambda_i),
        \]
    where $\lambda_i\in\Omega$.
    The discriminant of $f$ is defined by
        \[
            \operatorname{Disc}_X(f) = \prod_{1\leq i<j\leq4}(\lambda_i-\lambda_j)^2.
        \]
    We also define the cubic polynomial
        \[
            S(X) = x_0(X^3-sX^2+tX-u) + x_1(X^2-sX+t) + x_2(X-s) + x_3.
        \]

    \begin{lem}
    \label{lem:5koukan_sol}
        Suppose that $\operatorname{Disc}_X(f)\neq0$.
        Then the roots $\lambda_1,\lambda_2,\lambda_3,\lambda_4$ are
        pairwise distinct, and the general solution of
            \[
                x_{n+4}-sx_{n+3}+tx_{n+2}-ux_{n+1}+vx_n=0
            \]
        is
            \[
                x_n = \sum_{i=1}^4 \frac{\lambda_i^nS(\lambda_i)}{f'(\lambda_i)}.
            \]
    \end{lem}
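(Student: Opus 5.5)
The plan has three parts: show the roots are distinct, check that the proposed formula solves the recurrence, and check that it reproduces the initial values $x_0,\dots,x_3$.

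\textbf{Distinct roots.} Since $\operatorname{Disc}_X(f)=\prod_{i<j}(\lambda_i-\lambda_j)^2\neq0$, no factor $\lambda_i-\lambda_j$ vanishes. Hence the roots are pairwise distinct and $f'(\lambda_i)=\prod_{j\neq i}(\lambda_i-\lambda_j)\neq0$, so the expression is well defined over $\Omega$.

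\textbf{The formula solves the recurrence.} For each $i$, the sequence $n\mapsto\lambda_i^n$ satisfies the recurrence, because $\lambda_i^{n}f(\lambda_i)=0$. By linearity, any combination $\sum_i c_i\lambda_i^n$ with constants $c_i$ is a solution. A solution of the fourth-order recurrence is determined by $x_0,x_1,x_2,x_3$. So it remains to show that $c_i=S(\lambda_i)/f'(\lambda_i)$ gives back $x_m$ for $m=0,1,2,3$.

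\textbf{Matching the initial values.} I would avoid a Vandermonde computation and use generating functions instead. Put $F(T)=\sum_{n\ge0}x_nT^n$ and $\tilde f(T)=T^4f(1/T)=1-sT+tT^2-uT^3+vT^4$. The recurrence says that $\tilde f(T)F(T)$ has no terms of degree $\ge4$, so $\tilde f(T)F(T)=N(T)$ with $\deg N\le3$. Expanding the low-degree coefficients gives
\[
N(T)=x_0+(x_1-sx_0)T+(x_2-sx_1+tx_0)T^2+(x_3-sx_2+tx_1-ux_0)T^3 .
\]
Collecting by $x_0,\dots,x_3$ shows $N(T)=T^3S(1/T)$, which is exactly why $S$ was defined as it is. Since $\tilde f(T)=\prod_i(1-\lambda_iT)$ with distinct $\lambda_i$ and $\deg N<4$, partial fractions give
\[
\frac{N(T)}{\tilde f(T)}=\sum_i\frac{c_i}{1-\lambda_iT}.
\]
Multiplying by $1-\lambda_iT$ and setting $T=1/\lambda_i$ (assuming $\lambda_i\neq0$, i.e.\ $v\neq0$) yields
\[
c_i=\frac{\lambda_i^{-3}S(\lambda_i)}{\prod_{j\neq i}(1-\lambda_j/\lambda_i)}=\frac{S(\lambda_i)}{f'(\lambda_i)}.
\]
Expanding each geometric series then gives $x_n=\sum_i c_i\lambda_i^n$.

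\textbf{The obstacle: a possible zero root.} If $v=0$, one root, say $\lambda_4$, is $0$, and the evaluation at $T=1/\lambda_4$ is unavailable. Here I would use the Lagrange interpolation identity instead. For any polynomial $g$ with $\deg g\le3$,
\[
\sum_i\frac{g(\lambda_i)}{f'(\lambda_i)}=[X^3]\,g .
\]
Write $\lambda^{m}S(\lambda)$ modulo $f$, or argue directly that $x_m=\sum_i\lambda_i^mS(\lambda_i)/f'(\lambda_i)$ for $m=0,\dots,3$. Using $\sum_i\lambda_i^k/f'(\lambda_i)=h_{k-3}(\lambda_1,\dots,\lambda_4)$, which is $0$ for $k<3$ and $1$ for $k=3$, together with the Newton-type relations expressing the complete symmetric polynomials $h_1,h_2,h_3$ in terms of $s,t,u$, the triangular structure of $S$ makes the check routine. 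This version works uniformly, including $v=0$.
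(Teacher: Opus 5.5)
Your proof is correct, but it takes a different route from the paper's.

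The paper uses neither superposition nor generating functions here. For each $\ell$ it rewrites the recurrence as $E_\ell(n+1)=\lambda_\ell E_\ell(n)$, where $E_\ell(n)=x_{n+3}-e_1x_{n+2}+e_2x_{n+1}-e_3x_n$ and $e_1,e_2,e_3$ are the elementary symmetric functions of the three roots other than $\lambda_\ell$. It concludes $E_\ell(n)=\lambda_\ell^nE_\ell(0)$ and solves the resulting $4\times 4$ linear system for $x_n$. The rows of that system are the coefficient vectors of $f(X)/(X-\lambda_\ell)$, so the inversion is Lagrange interpolation in disguise, although the paper only says ``solving this system gives''. Finally it identifies $E_\ell(0)=S(\lambda_\ell)$ by Vieta's formulas. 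This factorization argument is uniform in the roots: a zero root merely forces $E_\ell(n)=0$ for $n\geq1$, so no case split arises.

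Your approach instead reduces everything to matching four initial values. The observation $N(T)=T^3S(1/T)$ explains why $S$ is defined as it is. It also ties the lemma directly to the expansion $X^3/(1-sX+tX^2-uX^3+vX^4)=\sum g_nX^n$ that the paper uses immediately afterwards; your fallback identity $\sum_i\lambda_i^k/f'(\lambda_i)=h_{k-3}$ is exactly that expansion.

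Your $v=0$ obstacle is only an artifact of doing partial fractions in $T$. Decompose $S(X)/f(X)=\sum_i S(\lambda_i)/\bigl(f'(\lambda_i)(X-\lambda_i)\bigr)$ in the variable $X$, which is legitimate because $\deg S\leq 3$ and the roots are distinct. Substituting $X=1/T$ then yields $N(T)/\tilde f(T)=\sum_i S(\lambda_i)/\bigl(f'(\lambda_i)(1-\lambda_iT)\bigr)$ with no assumption on $v$. This is the same trick the paper applies to $1/f$ when computing $g_n$.
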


    \begin{proof}
        We perform the following calculations over $\Omega$.
        Let $\{i,j,k,\ell\}=\{1,2,3,4\}$.
        The recurrence can be rewritten as
            \begin{align*}
                &x_{n+4}
                -(\lambda_i+\lambda_j+\lambda_k)x_{n+3}
                +(\lambda_i\lambda_j+\lambda_j\lambda_k
                +\lambda_k\lambda_i)x_{n+2}
                -\lambda_i\lambda_j\lambda_kx_{n+1}\\
                &\quad=
                \lambda_\ell\bigl\{
                    x_{n+3}
                    -(\lambda_i+\lambda_j+\lambda_k)x_{n+2}
                    +(\lambda_i\lambda_j+\lambda_j\lambda_k
                    +\lambda_k\lambda_i)x_{n+1}
                    -\lambda_i\lambda_j\lambda_kx_n
                \bigr\}.
            \end{align*}
        Hence,
            \begin{align*}
                &x_{n+3}
                -(\lambda_i+\lambda_j+\lambda_k)x_{n+2}
                +(\lambda_i\lambda_j+\lambda_j\lambda_k
                +\lambda_k\lambda_i)x_{n+1}
                -\lambda_i\lambda_j\lambda_kx_n\\
                &\quad=
                \lambda_\ell^n\bigl\{
                    x_3
                    -(\lambda_i+\lambda_j+\lambda_k)x_2
                    +(\lambda_i\lambda_j+\lambda_j\lambda_k
                    +\lambda_k\lambda_i)x_1
                    -\lambda_i\lambda_j\lambda_kx_0
                \bigr\}.
            \end{align*}

        For simplicity, set
            \[
                \Lambda_\ell
                :=
                \lambda_\ell^n\bigl\{
                    x_3-(\lambda_i+\lambda_j+\lambda_k)x_2
                    +(\lambda_i\lambda_j+\lambda_j\lambda_k
                    +\lambda_k\lambda_i)x_1
                    -\lambda_i\lambda_j\lambda_kx_0
                \bigr\}.
            \]
        The four equations above can be written as
            \[
            \begin{pmatrix}
            1&-\lambda_2-\lambda_3-\lambda_4&
            \lambda_2\lambda_3+\lambda_3\lambda_4+\lambda_4\lambda_2&
            -\lambda_2\lambda_3\lambda_4\\
            1&-\lambda_1-\lambda_3-\lambda_4&
            \lambda_1\lambda_3+\lambda_3\lambda_4+\lambda_4\lambda_1&
            -\lambda_1\lambda_3\lambda_4\\
            1&-\lambda_1-\lambda_2-\lambda_4&
            \lambda_1\lambda_2+\lambda_2\lambda_4+\lambda_4\lambda_1&
            -\lambda_1\lambda_2\lambda_4\\
            1&-\lambda_1-\lambda_2-\lambda_3&
            \lambda_1\lambda_2+\lambda_2\lambda_3+\lambda_3\lambda_1&
            -\lambda_1\lambda_2\lambda_3
            \end{pmatrix}
            \begin{pmatrix}
                x_{n+3}\\x_{n+2}\\x_{n+1}\\x_n
            \end{pmatrix}
            =
            \begin{pmatrix}
                \Lambda_1\\\Lambda_2\\\Lambda_3\\\Lambda_4
            \end{pmatrix}.
            \]
        Solving this system gives
            \begin{align*}
                x_n
                &=
                \frac{\Lambda_1}
                {(\lambda_1-\lambda_2)(\lambda_1-\lambda_3)
                 (\lambda_1-\lambda_4)}
                +
                \frac{\Lambda_2}
                {(\lambda_2-\lambda_1)(\lambda_2-\lambda_3)
                 (\lambda_2-\lambda_4)}\\
                &\quad+
                \frac{\Lambda_3}
                {(\lambda_3-\lambda_1)(\lambda_3-\lambda_2)
                 (\lambda_3-\lambda_4)}
                +
                \frac{\Lambda_4}
                {(\lambda_4-\lambda_1)(\lambda_4-\lambda_2)
                 (\lambda_4-\lambda_3)}.
            \end{align*}

        By the definition of $S$ and the relations between the roots and coefficients of $f$,
            \begin{align*}
                S(\lambda_i)
                &=
                x_0(\lambda_i^3-s\lambda_i^2+t\lambda_i-u)
                +x_1(\lambda_i^2-s\lambda_i+t)
                +x_2(\lambda_i-s)+x_3\\
                &=
                x_3-(\lambda_j+\lambda_k+\lambda_\ell)x_2
                +(\lambda_j\lambda_k+\lambda_k\lambda_\ell
                +\lambda_\ell\lambda_j)x_1
                -\lambda_j\lambda_k\lambda_\ell x_0.
            \end{align*}
        Thus,
            \[
                \Lambda_i=\lambda_i^nS(\lambda_i).
            \]
        Moreover,
            \[
                f'(\lambda_i) = (\lambda_i-\lambda_j)(\lambda_i-\lambda_k)(\lambda_i-\lambda_\ell).
            \]
        Substitution into the expression above yields
            \[
                x_n = \sum_{i=1}^4 \frac{\lambda_i^nS(\lambda_i)}{f'(\lambda_i)}.
            \]
    \end{proof}

    From now on, assume that $\operatorname{Disc}_X(f)\neq0$.
    Since $x_n$ is symmetric in the roots $\lambda_i$, it can be expressed in terms of the elementary symmetric polynomials $s,t,u,v$.

    Set
        \[
            g_n := \sum_{i=1}^4\frac{\lambda_i^n}{f'(\lambda_i)}.
        \]
    We now express $g_n$ in terms of $s,t,u,v$.
    By partial fraction decomposition,
        \[
            \frac{1}{f(X)} = \frac{1}{X^4-sX^3+tX^2-uX+v} = \sum_{i=1}^4\frac{1}{f'(\lambda_i)(X-\lambda_i)}.
        \]
    Replacing $X$ by $X^{-1}$ and using
        \[
            \frac{1}{1-\lambda_iX}
            =
            \sum_{n=0}^\infty(\lambda_iX)^n
        \]
    in the formal power series ring $\Omega[[X]]$, we obtain
        \begin{align*}
            \frac{X^4}{1-sX+tX^2-uX^3+vX^4}
            &=
            \sum_{i=1}^4
            \frac{X}{f'(\lambda_i)(1-\lambda_iX)}\\
            &=
            \sum_{i=1}^4
            \frac{X}{f'(\lambda_i)}
            \sum_{n=0}^\infty(\lambda_iX)^n\\
            &=
            \sum_{n=0}^\infty
            \left(
                \sum_{i=1}^4\frac{\lambda_i^n}{f'(\lambda_i)}
            \right)X^{n+1}\\
            &=
            \sum_{n=0}^\infty g_nX^{n+1}.
        \end{align*}
    Dividing both sides by $X$ gives
        \[
            \frac{X^3}{1-sX+tX^2-uX^3+vX^4}
            =
            \sum_{n=0}^\infty g_nX^n.
        \]
    Expanding the denominator as a geometric series, we obtain
        \[
            \sum_{n=0}^\infty g_nX^n = X^3\sum_{n=0}^\infty (sX-tX^2+uX^3-vX^4)^n.
        \]
    Since the lowest degree on the right-hand side is $3$, we have
        \[
            g_0=g_1=g_2=0.
        \]

    By the multinomial theorem,
        \begin{align*}
            &(sX-tX^2+uX^3-vX^4)^n\\
            &=
            \sum_{\substack{i+j+k+\ell=n\\i,j,k,\ell\geq0}}
            \frac{n!}{i!j!k!\ell!}
            (sX)^i(-tX^2)^j(uX^3)^k(-vX^4)^\ell\\
            &=
            \sum_{\substack{i+j+k+\ell=n\\i,j,k,\ell\geq0}} \hspace{-3mm} (-1)^{j+\ell} \frac{n!}{i!j!k!\ell!} s^it^ju^kv^\ell X^{i+2j+3k+4\ell}.
        \end{align*}
    Therefore, for $n\geq3$,
        \[ \displaystyle
            g_n = \hspace{-4mm}\sum_{\substack{i+2j+3k+4\ell+3=n\\i,j,k,\ell\geq0}}\hspace{-4mm} (-1)^{j+\ell} \frac{(i+j+k+\ell)!}{i!j!k!\ell!} s^it^ju^kv^\ell.
        \]

    Since
        \begin{align*}
            S(X)
            &=
            x_0(X^3-sX^2+tX-u)
            +x_1(X^2-sX+t)+x_2(X-s)+x_3\\
            &=
            x_0X^3-(sx_0-x_1)X^2
            +(tx_0-sx_1+x_2)X
            -(ux_0-tx_1+sx_2-x_3),
        \end{align*}
    substituting this expression into Lemma~\ref{lem:5koukan_sol} gives the following lemma.

    \begin{lem}
    \label{lem:5koukan}
        Suppose that $\operatorname{Disc}_X(f)\neq0$.
        A solution of
            \[
                x_{n+4}-sx_{n+3}+tx_{n+2}-ux_{n+1}+vx_n=0
            \]
        is given by
            \[
                x_n = x_0g_{n+3} - (sx_0-x_1)g_{n+2} + (tx_0-sx_1+x_2)g_{n+1} - (ux_0-tx_1+sx_2-x_3)g_n.
            \]
    \end{lem}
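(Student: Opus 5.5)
The plan is to substitute the expansion of $S(X)$ into the formula of Lemma~\ref{lem:5koukan_sol} and then use the definition of $g_n$. Write
\[
S(X)=c_3X^3+c_2X^2+c_1X+c_0,
\]
where the coefficients are those displayed just above:
\[
c_3=x_0,\quad c_2=-(sx_0-x_1),\quad c_1=tx_0-sx_1+x_2,\quad c_0=-(ux_0-tx_1+sx_2-x_3).
\]
These coefficients lie in $K$ and do not depend on $i$. Since $\operatorname{Disc}_X(f)\neq0$, Lemma~\ref{lem:5koukan_sol} applies, and linearity of the finite sum gives
\[
x_n=\sum_{i=1}^4\frac{\lambda_i^n\sum_{m=0}^3c_m\lambda_i^m}{f'(\lambda_i)}
=\sum_{m=0}^3 c_m\sum_{i=1}^4\frac{\lambda_i^{n+m}}{f'(\lambda_i)}
=\sum_{m=0}^3c_m\,g_{n+m}.
\]
Reading off the coefficients gives exactly
\[
x_0g_{n+3}-(sx_0-x_1)g_{n+2}+(tx_0-sx_1+x_2)g_{n+1}-(ux_0-tx_1+sx_2-x_3)g_n.
\]

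As an independent check, and to justify the phrase ``a solution'', I would verify two things directly from the generating-function description $\sum g_nX^n=X^3/(1-sX+tX^2-uX^3+vX^4)$.

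First, $(g_n)$ satisfies the recurrence for $n\geq0$. Multiplying the generating function by the denominator gives $X^3$, so the recurrence coefficients of $g$ vanish in every degree $\geq4$. This means $g_{n+4}-sg_{n+3}+tg_{n+2}-ug_{n+1}+vg_n=0$ for all $n\geq0$. Any linear combination of shifts of $(g_n)$ therefore also satisfies the recurrence.

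Second, the initial values agree. Using $g_0=g_1=g_2=0$ and $g_3=1$, together with $g_4=s$, $g_5=s^2-t$ and $g_6=s^3-2st+u$ (read off from the expansion of $X^3\sum(sX-tX^2+uX^3-vX^4)^n$), the right-hand side evaluated at $n=0,1,2,3$ returns $x_0,x_1,x_2,x_3$. This is a short triangular computation.

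The main obstacle is only bookkeeping. One has to keep the signs in $S(X)$ straight, and confirm that the index shift $\lambda_i^{n+m}\leftrightarrow g_{n+m}$ matches the paper's definition $g_n=\sum_i\lambda_i^n/f'(\lambda_i)$ rather than a shifted convention. Both are settled by the displayed expansion of $S$ and by the identity for $\sum g_nX^n$ derived above.
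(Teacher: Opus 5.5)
Your proof is correct and follows essentially the paper's route. The paper likewise expands $S(X)$ into powers of $X$, substitutes into Lemma~\ref{lem:5koukan_sol}, and uses linearity together with $g_n=\sum_i \lambda_i^n/f'(\lambda_i)$. Your supplementary check (the generating function forces the recurrence on $g_n$, and the initial values return $x_0,\dots,x_3$) is also correct, and it shows the formula holds even without the discriminant hypothesis once $g_n$ is defined by the generating function; the lemma does not need it.
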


    We now compute the HOMFLY polynomial of
        \[
            T(3,n)=\overline{(\sigma_1\sigma_2)^n}.
        \]
    When it is necessary to emphasize the number of strands, we denote the closure of $\beta\in B_n$ by $\overline{\beta}\,_n$.

    Set
        \[
            a_n
            :=
            P(\overline{(\sigma_1\sigma_2)^n\sigma_1};y,z),
            \qquad
            b_n
            :=
            P(\overline{(\sigma_1\sigma_2)^n\sigma_2};y,z),
            \qquad
            c_n
            :=
            P(T(3,n);y,z).
        \]
    The sequences $a_n$ and $b_n$ are auxiliary sequences obtained by
    adding one positive crossing to $T(3,n)$, while $c_n$ is the sequence that we wish to determine.

    \begin{prop}
    \label{prop:initial_an}
        \[
        \begin{gathered}
            a_0=(y+y^{-1})z^{-1},\qquad
            a_1=P(T(2,2);y,z),\qquad
            a_2=P(T(2,4);y,z),\\
            a_3
            =
            \{-y^{-3}z+(y^{-3}+y^{-5})z^{-1}\}
            P(T(2,3);y,z)
            +
            y^{-1}zP(T(2,5);y,z).
        \end{gathered}
        \]
    \end{prop}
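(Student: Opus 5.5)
The plan is to reduce each closed $3$-braid $(\sigma_1\sigma_2)^n\sigma_1$, for $n\le 3$, to closed $2$-braids $\sigma_1^m$. The tools are the braid relation $\sigma_1\sigma_2\sigma_1=\sigma_2\sigma_1\sigma_2$, cyclic permutation (Lemma~\ref{lem:cyclic_closure}), and Markov destabilization (Theorem~\ref{thm:Markov}(2)). Where this alone does not suffice, as for $n=3$, I use the skein relation and Proposition~\ref{prop:unknot_union}.

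\textbf{The cases $n=0,1,2$.} For $n=0$, the closure $\overline{\sigma_1}\,_3$ consists of the closure of $\sigma_1$ on the first two strands, which is an unknot, together with a separate unknotted third strand. Proposition~\ref{prop:unknot_union} then gives $a_0=(y+y^{-1})z^{-1}$.

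For $n=1$, I rewrite $\sigma_1\sigma_2\sigma_1=\sigma_1^2\sigma_2$ using the braid relation. Destabilizing the single $\sigma_2$ leaves $\overline{\sigma_1^2}\,_2=T(2,2)$.

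For $n=2$, I have $\sigma_1(\sigma_2\sigma_1\sigma_2)\sigma_1=\sigma_1^2\sigma_2\sigma_1^2$. Cyclic permutation turns this into $\sigma_1^4\sigma_2$, and destabilization gives $T(2,4)$.

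\textbf{The case $n=3$.} The same manipulations give
\[
(\sigma_1\sigma_2)^3\sigma_1=\sigma_1^2\sigma_2\sigma_1^2\sigma_2\sigma_1 ,
\]
which is cyclically equivalent to $\sigma_1^3\sigma_2\sigma_1^2\sigma_2$. This word contains two occurrences of $\sigma_2$ and cannot be destabilized directly, so this is the step needing real work.

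I apply the skein relation at one crossing of the middle block $\sigma_1^2$:
\[
y\,a_3+y^{-1}P(\overline{\sigma_1^3\sigma_2^2};y,z)=z\,P(\overline{\sigma_1^3\sigma_2\sigma_1\sigma_2};y,z).
\]
On the right-hand side, $\sigma_1^3\sigma_2\sigma_1\sigma_2=\sigma_1^4\sigma_2\sigma_1$, which is cyclically equivalent to $\sigma_1^5\sigma_2$. Destabilizing gives $T(2,5)$.

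For the remaining term, I apply the skein relation once more, this time at a crossing of $\sigma_2^2$:
\[
yP(\overline{\sigma_1^3\sigma_2^2})+y^{-1}P(\overline{\sigma_1^3}\,_3)=zP(\overline{\sigma_1^3\sigma_2}).
\]
Here $\overline{\sigma_1^3\sigma_2}$ destabilizes to $T(2,3)$. Also, $\overline{\sigma_1^3}\,_3$ is $T(2,3)$ together with a split unknot, so by Proposition~\ref{prop:unknot_union} its polynomial is $z^{-1}(y+y^{-1})P(T(2,3))$. This yields
\[
P(\overline{\sigma_1^3\sigma_2^2})=\bigl(y^{-1}z-(y^{-1}+y^{-3})z^{-1}\bigr)P(T(2,3)).
\]

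Substituting back and dividing by $y$ gives
\[
a_3=y^{-1}zP(T(2,5))+\{-y^{-3}z+(y^{-3}+y^{-5})z^{-1}\}P(T(2,3)),
\]
which is the claimed formula.

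The main obstacle is the $n=3$ case: choosing the crossing for the first skein step so that the smoothed diagram destabilizes cleanly, and keeping the signs and powers of $y$ straight in the second skein step.
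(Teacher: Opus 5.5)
Your proposal follows the paper's proof step for step: the same rewriting to $\overline{\sigma_1^3\sigma_2\sigma_1^2\sigma_2}$, the same two skein applications (first at the middle $\sigma_1^2$, then at $\sigma_2^2$), and the same destabilizations to $T(2,3)$ and $T(2,5)$. One small slip is in the case $n=1$: the identity $\sigma_1\sigma_2\sigma_1=\sigma_1^2\sigma_2$ is false in $B_3$, since it would force $\sigma_2\sigma_1=\sigma_1\sigma_2$, and what you actually need is $\overline{\sigma_1\sigma_2\sigma_1}\cong\overline{\sigma_1^2\sigma_2}$, which follows from Lemma~\ref{lem:cyclic_closure} rather than from the braid relation.
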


    \begin{proof}
        For $n=0,1,2$, Lemma~\ref{lem:cyclic_closure}, Proposition~\ref{prop:unknot_union}, Theorem~\ref{thm:Markov}(2), and the braid relation give
            \begin{align*}
                a_0
                &=
                P(\overline{\sigma_1}_3;y,z)\\
                &=
                P(\bigcirc\cup\bigcirc;y,z)\\
                &=
                (y+y^{-1})z^{-1},
            \end{align*}
            \begin{align*}
                a_1
                &=
                P(\overline{\sigma_1\sigma_2\sigma_1};y,z)\\
                &=
                P(\overline{\sigma_1\sigma_1\sigma_2};y,z)\\
                &=
                P(\overline{\sigma_1^2}_2;y,z)\\
                &=
                P(T(2,2);y,z),
            \end{align*}
        and
            \begin{align*}
                a_2
                &=
                P(\overline{\sigma_1\sigma_2\sigma_1\sigma_2\sigma_1};y,z)\\
                &=
                P(\overline{\sigma_1\sigma_1\sigma_2\sigma_1\sigma_1};y,z)\\
                &=
                P(\overline{\sigma_1^4\sigma_2};y,z)\\
                &=
                P(\overline{\sigma_1^4}_2;y,z)\\
                &=
                P(T(2,4);y,z).
            \end{align*}
        In the last steps for $a_1$ and $a_2$, we use Markov destabilization, the inverse operation of Theorem~\ref{thm:Markov}(2).

        For $n=3$, we first write
            \[
                (\sigma_1\sigma_2)^3\sigma_1
                =
                \sigma_1\sigma_2\sigma_1\sigma_2\sigma_1\sigma_2\sigma_1.
            \]
        By applying Lemma~\ref{lem:cyclic_closure}, we may move the last $\sigma_1$ to the front after taking the closure. Hence
            \[
                \overline{(\sigma_1\sigma_2)^3\sigma_1} \cong \overline{\sigma_1^2\sigma_2\sigma_1\sigma_2\sigma_1\sigma_2}.
            \]
        Using the braid relation
            \[
                \sigma_2\sigma_1\sigma_2=\sigma_1\sigma_2\sigma_1,
            \]
        we obtain
            \[
                \overline{\sigma_1^2\sigma_2\sigma_1\sigma_2\sigma_1\sigma_2} = \overline{\sigma_1^3\sigma_2\sigma_1^2\sigma_2}.
            \]
        Thus,
            \[
                \overline{(\sigma_1\sigma_2)^3\sigma_1} \cong \overline{\sigma_1^3\sigma_2\sigma_1^2\sigma_2}.
            \]
        Applying the skein relation to either of the two crossings corresponding to the factor $\sigma_1^2$, we obtain
            \begin{align*}
                ya_3
                +y^{-1}P(\overline{\sigma_1^3\sigma_2^2};y,z)
                &=
                zP(\overline{\sigma_1^3\sigma_2\sigma_1\sigma_2};y,z)\\
                &=
                zP(\overline{\sigma_1^5\sigma_2};y,z)\\
                &=
                zP(\overline{\sigma_1^5}_2;y,z)\\
                &=
                zP(T(2,5);y,z).
            \end{align*}
        Hence,
            \[
                a_3 = -y^{-2}P(\overline{\sigma_1^3\sigma_2^2};y,z) + y^{-1}zP(T(2,5);y,z).
            \]
        The passage to a two-strand braid again uses Markov destabilization.

        Furthermore, Proposition~\ref{prop:unknot_union} gives
            \begin{align*}
                &yP(\overline{\sigma_1^3\sigma_2^2};y,z)
                +y^{-1}P(\overline{\sigma_1^3}_3;y,z)
                =
                zP(\overline{\sigma_1^3\sigma_2};y,z),\\
                &yP(\overline{\sigma_1^3\sigma_2^2};y,z)
                +y^{-1}P(\bigcirc\cup\overline{\sigma_1^3}_2;y,z)
                =
                zP(\overline{\sigma_1^3}_2;y,z),\\
                &yP(\overline{\sigma_1^3\sigma_2^2};y,z)
                +y^{-1}z^{-1}(y+y^{-1})P(T(2,3);y,z)
                =
                zP(T(2,3);y,z).
            \end{align*}
        Therefore,
            \[
                P(\overline{\sigma_1^3\sigma_2^2};y,z) = \{y^{-1}z-(y^{-1}+y^{-3})z^{-1}\} P(T(2,3);y,z).
            \]
        Substitution yields
            \[
                a_3 = \{-y^{-3}z+(y^{-3}+y^{-5})z^{-1}\} P(T(2,3);y,z) + y^{-1}zP(T(2,5);y,z).
            \]
    \end{proof}

        The results of the preceding subsection give the following values.

    \begin{prop}
    \label{prop:T2n}
        \begin{align*}
            P(T(2,2);y,z)
                &=y^{-1}z-(y^{-1}+y^{-3})z^{-1},\\
            P(T(2,3);y,z)
                &=y^{-2}z^2-2y^{-2}-y^{-4},\\
            P(T(2,4);y,z)
                &=y^{-3}z^3-(3y^{-3}+y^{-5})z
                  +(y^{-3}+y^{-5})z^{-1},\\
            P(T(2,5);y,z)
                &=y^{-4}z^4-(4y^{-4}+y^{-6})z^2
                  +3y^{-4}+2y^{-6}.
        \end{align*}
    \end{prop}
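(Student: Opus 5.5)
The plan is to read off the four values from Theorem~\ref{thm:T2_HOM} with $n=2,3,4,5$. For this I first need the polynomials $H_m(y^{-1}z,y^{-2})$ for $m=0,\dots,4$. I will compute them from the closed form
\[
H_m(e_1,e_2)=\sum_{j=0}^{\lfloor m/2\rfloor}(-1)^j\binom{m-j}{j}e_1^{m-2j}e_2^j .
\]
Equivalently, one can use the recurrence $H_{m+2}=e_1H_{m+1}-e_2H_m$ starting from $H_0=1$ and $H_1=e_1$. With $e_1=y^{-1}z$ and $e_2=y^{-2}$, every term $e_1^{m-2j}e_2^j$ becomes $y^{-m}z^{m-2j}$. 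This gives
\begin{align*}
H_0&=1, & H_1&=y^{-1}z, & H_2&=y^{-2}z^2-y^{-2},\\
H_3&=y^{-3}z^3-2y^{-3}z, & H_4&=y^{-4}z^4-3y^{-4}z^2+y^{-4}.
\end{align*}

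Next I substitute these into
\[
P(T(2,n);y,z)=H_{n-1}-(y^{-1}+y^{-3})z^{-1}H_{n-2}
\]
for each $n$ and collect terms by powers of $z$.

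For $n=2$ the formula gives $y^{-1}z-(y^{-1}+y^{-3})z^{-1}$ immediately.

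The case $n=3$ is exactly Example~\ref{ex:HOM_trefoil}.

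For $n=4$, expand $(y^{-1}+y^{-3})z^{-1}(y^{-2}z^2-y^{-2})=(y^{-3}+y^{-5})z-(y^{-3}+y^{-5})z^{-1}$. Subtracting this from $H_3$, the $z$-coefficient becomes $-2y^{-3}-y^{-3}-y^{-5}=-(3y^{-3}+y^{-5})$, which is the stated value.

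For $n=5$, expand $(y^{-1}+y^{-3})z^{-1}(y^{-3}z^3-2y^{-3}z)=(y^{-4}+y^{-6})z^2-2(y^{-4}+y^{-6})$. Subtracting this from $H_4$, the $z^2$-coefficient is $-3y^{-4}-y^{-4}-y^{-6}=-(4y^{-4}+y^{-6})$ and the constant term is $y^{-4}+2y^{-4}+2y^{-6}=3y^{-4}+2y^{-6}$.

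There is no conceptual obstacle here; the only risk is bookkeeping of signs and exponents. As a consistency check, I will verify that the four results satisfy the skein recurrence $a_{n+2}=y^{-1}za_{n+1}-y^{-2}a_n$ from the proof of Theorem~\ref{thm:T2_HOM}, with $a_1=1$ and $a_0=(y+y^{-1})z^{-1}$. Running this recurrence directly from $n=0$ is in fact an equally short alternative derivation of all four formulas.
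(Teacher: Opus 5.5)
Your proposal is correct and follows the paper's proof. You evaluate $H_0,\dots,H_4$ at $(y^{-1}z,y^{-2})$, take $T(2,3)$ from Example~\ref{ex:HOM_trefoil}, and substitute into Theorem~\ref{thm:T2_HOM} for $n=2,4,5$; your values of the $H_m$ and your collected coefficients all check out.
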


    \begin{proof}
        The value of $P(T(2,3);y,z)$ was obtained in
        Example~\ref{ex:HOM_trefoil}.
        By Theorem~\ref{thm:T2_HOM},
            \begin{align*}
                P(T(2,2);y,z)
                    &=
                    H_1(y^{-1}z,y^{-2})
                    -(y^{-1}+y^{-3})z^{-1}H_0(y^{-1}z,y^{-2}),\\
                P(T(2,4);y,z)
                    &=
                    H_3(y^{-1}z,y^{-2})
                    -(y^{-1}+y^{-3})z^{-1}H_2(y^{-1}z,y^{-2}),\\
                P(T(2,5);y,z)
                    &=
                    H_4(y^{-1}z,y^{-2})
                    -(y^{-1}+y^{-3})z^{-1}H_3(y^{-1}z,y^{-2}).
            \end{align*}
        Here,
            \begin{align*}
                H_4(y^{-1}z,y^{-2})
                    &=y^{-4}z^4-3y^{-4}z^2+y^{-4},\\
                H_3(y^{-1}z,y^{-2})
                    &=y^{-3}z^3-2y^{-3}z,\\
                H_2(y^{-1}z,y^{-2})
                    &=y^{-2}z^2-y^{-2},\\
                H_1(y^{-1}z,y^{-2})
                    &=y^{-1}z,\\
                H_0(y^{-1}z,y^{-2})
                    &=1.
            \end{align*}
        Substituting these expressions gives the desired formulas.
    \end{proof}

    \begin{lem}
    \label{lem:anbncn}
        The sequences $a_n,b_n,c_n$ defined above satisfy the  following:
            \begin{enumerate}
                \setlength{\itemsep}{5pt}
                \item $a_n=b_n$ for every $n\in\ZZ_{\geq0}$.
                \item $ya_n+y^{-1}a_{n-1}=zc_n$ for every $n\in\NN$.
            \end{enumerate}
    \end{lem}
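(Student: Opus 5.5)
For part (1), I will show that the braids $(\sigma_1\sigma_2)^n\sigma_1$ and $(\sigma_1\sigma_2)^n\sigma_2$ have equivalent closures by conjugating one into the other. The natural tool is $\Delta=\sigma_1\sigma_2\sigma_1$. From the braid relation,
\[
\Delta\sigma_1=\sigma_2\Delta,
\qquad
\Delta\sigma_2=\sigma_1\Delta,
\]
so conjugation by $\Delta$ interchanges $\sigma_1$ and $\sigma_2$. Applying this to the product gives
\[
\Delta(\sigma_1\sigma_2)\Delta^{-1}=\sigma_2\sigma_1.
\]
That is not literally $\sigma_1\sigma_2$, so I will handle the powers more carefully.

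The cleaner route is the flip identity
\[
(\sigma_1\sigma_2)^n\sigma_1=\sigma_2(\sigma_1\sigma_2)^n.
\]
It follows by induction on $n$ from $\sigma_1\sigma_2\sigma_1=\sigma_2\sigma_1\sigma_2$, which can be rewritten as $(\sigma_1\sigma_2)\sigma_1=\sigma_2(\sigma_1\sigma_2)$. Now apply Lemma~\ref{lem:cyclic_closure} to move the leading $\sigma_2$ to the end:
\[
\overline{(\sigma_1\sigma_2)^n\sigma_1}
=\overline{\sigma_2(\sigma_1\sigma_2)^n}
\cong\overline{(\sigma_1\sigma_2)^n\sigma_2}.
\]
Since $P$ is an invariant (Theorem~\ref{thm:HOMFLY_existence}), this gives $a_n=b_n$ for all $n\geq0$. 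The case $n=0$ is simply $\overline{\sigma_1}\cong\overline{\sigma_2}$ in $B_3$, which the same argument covers.

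For part (2), I will apply the skein relation at a single crossing of the braid word $(\sigma_1\sigma_2)^{n-1}\sigma_1\sigma_2$. The natural choice is the final $\sigma_2$, but the resulting triple is then
\[
D_+=(\sigma_1\sigma_2)^n,\qquad
D_-=(\sigma_1\sigma_2)^{n-1}\sigma_1\sigma_2^{-1},\qquad
D_0=(\sigma_1\sigma_2)^{n-1}\sigma_1.
\]
Here $D_0$ gives $a_{n-1}$ and $D_+$ gives $c_n$, but the skein relation puts $P(D_0)$ on the $z$ side and $c_n$ on the $y$ side, which does not match the target identity $ya_n+y^{-1}a_{n-1}=zc_n$.

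The target needs $a_n$ as $D_+$, $a_{n-1}$ as $D_-$, and $c_n$ as $D_0$. So the crossing should be a $\sigma_1$ whose square appears in a word for $(\sigma_1\sigma_2)^n\sigma_1$. Rewrite
\[
(\sigma_1\sigma_2)^n\sigma_1\cong\sigma_1(\sigma_1\sigma_2)^n=\sigma_1^2\sigma_2(\sigma_1\sigma_2)^{n-1},
\]
using Lemma~\ref{lem:cyclic_closure}. Changing or smoothing one of the two leading $\sigma_1$ crossings gives:
\begin{itemize}
\item $D_-=\sigma_2(\sigma_1\sigma_2)^{n-1}$, whose closure is $\overline{(\sigma_1\sigma_2)^{n-1}\sigma_2}$ by the cyclic lemma, with polynomial $b_{n-1}=a_{n-1}$ by part (1);
\item $D_0=\sigma_1\sigma_2(\sigma_1\sigma_2)^{n-1}=(\sigma_1\sigma_2)^n$, with polynomial $c_n$.
\end{itemize}
The skein relation then yields $ya_n+y^{-1}a_{n-1}=zc_n$ for $n\geq1$.

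The main obstacle is identifying $D_-$ correctly. With the factor $\sigma_1^2$, crossing change turns $\sigma_1^2$ into $\sigma_1\sigma_1^{-1}=1$, which is what the computation above uses. This is exactly how the paper applies the skein relation in Proposition~\ref{prop:initial_an}, taking $D_+=\beta\sigma_1^{2}\gamma$, $D_-=\beta\gamma$, $D_0=\beta\sigma_1\gamma$. I will state this convention explicitly and note where part (1) is used to convert $b_{n-1}$ into $a_{n-1}$.
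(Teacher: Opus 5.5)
Your part (2) is correct: up to a cyclic permutation, your skein triple gives the paper's relation $ya_n+y^{-1}b_{n-1}=zc_n$, and part (1) then turns $b_{n-1}$ into $a_{n-1}$.

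The gap is in part (1). The flip identity $(\sigma_1\sigma_2)^n\sigma_1=\sigma_2(\sigma_1\sigma_2)^n$ is false in general. For $n=0$ it reads $\sigma_1=\sigma_2$. For $n=2$ the two sides have different images $(2\,3)$ and $(1\,2)$ in $S_3$. The induction breaks because, with $x=\sigma_1\sigma_2$, the relation $x\sigma_1=\sigma_2x$ moves $\sigma_1$ through one factor $x$ but turns it into $\sigma_2$. Moving $\sigma_2$ through the next factor would need $x\sigma_2=\sigma_1x$, i.e.\ $\sigma_1\sigma_2^2=\sigma_1^2\sigma_2$, which is false. In fact conjugation by $x$ cycles $\sigma_1\mapsto\sigma_2\mapsto\sigma_1\sigma_2\sigma_1^{-1}\mapsto\sigma_1$, so your identity holds exactly when $n\equiv1\pmod 3$. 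Your remark that $n=0$ is covered by the same argument is therefore also wrong, although $\overline{\sigma_1}_3\cong\overline{\sigma_2}_3$ is true directly.

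The route you started with and abandoned does work. Conjugation by $\Delta=\sigma_1\sigma_2\sigma_1$ swaps $\sigma_1$ and $\sigma_2$, so
\[
\Delta(\sigma_1\sigma_2)^n\sigma_1\Delta^{-1}=(\sigma_2\sigma_1)^n\sigma_2=\sigma_2(\sigma_1\sigma_2)^n
\]
by mere regrouping. Theorem~\ref{thm:Markov}(1) together with Lemma~\ref{lem:cyclic_closure} then gives $\overline{(\sigma_1\sigma_2)^n\sigma_1}\cong\overline{(\sigma_1\sigma_2)^n\sigma_2}$ for every $n\geq0$.

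With this repair your argument is correct and proves more than the paper does, namely that the closures are isotopic. The paper never exhibits an isotopy. It writes both skein relations $ya_n+y^{-1}b_{n-1}=zc_n$ and $yb_n+y^{-1}a_{n-1}=zc_n$, subtracts them to get $a_n-b_n=y^{-2}(a_{n-1}-b_{n-1})$, and anchors this at $a_1=b_1$. That yields only equality of the polynomials, which is all the lemma requires.
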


    \begin{proof}
        \begin{enumerate}
        \item
        For $n=0$,
            \[
                \overline{\sigma_1}_3
                =
                \bigcirc\cup\bigcirc
                =
                \overline{\sigma_2}_3,
            \]
        and hence $a_0=b_0$.

        Now suppose that $n\geq1$.
        By Lemma~\ref{lem:cyclic_closure},
            \[
                P(\overline{(\sigma_1\sigma_2)^n\sigma_1^{-1}};y,z)
                =
                P(\overline{\sigma_2(\sigma_1\sigma_2)^{n-1}};y,z)
                =
                P(\overline{(\sigma_1\sigma_2)^{n-1}\sigma_2};y,z)
                =
                b_{n-1},
            \]
        and
            \[
                P(\overline{(\sigma_1\sigma_2)^n\sigma_2^{-1}};y,z)
                =
                P(\overline{(\sigma_1\sigma_2)^{n-1}\sigma_1};y,z)
                =
                a_{n-1}.
            \]
        Applying the skein relation gives
            \begin{align*}
                yP(\overline{(\sigma_1\sigma_2)^n\sigma_1};y,z)
                +y^{-1}P(\overline{(\sigma_1\sigma_2)^n\sigma_1^{-1}};y,z)
                &=
                zP(\overline{(\sigma_1\sigma_2)^n};y,z),\\
                yP(\overline{(\sigma_1\sigma_2)^n\sigma_2};y,z)
                +y^{-1}P(\overline{(\sigma_1\sigma_2)^n\sigma_2^{-1}};y,z)
                &=
                zP(\overline{(\sigma_1\sigma_2)^n};y,z).
            \end{align*}
        Thus,
            \[
                ya_n+y^{-1}b_{n-1}=zc_n,
                \qquad
                yb_n+y^{-1}a_{n-1}=zc_n.
                \tag{$*$}
            \]
        Subtracting the second equation from the first gives
            \[
                a_n-b_n
                =
                y^{-2}(a_{n-1}-b_{n-1}),
            \]
        and hence
            \[
                a_n-b_n
                =
                y^{-2n+2}(a_1-b_1).
            \]
        Using the braid relation followed by Lemma~\ref{lem:cyclic_closure}, we have
            \[
                \overline{\sigma_1\sigma_2\sigma_1}
                =
                \overline{\sigma_2\sigma_1\sigma_2}
                =
                \overline{\sigma_1\sigma_2\sigma_2}.
            \]
        Therefore,
            \[
                a_1
                =
                P(\overline{\sigma_1\sigma_2\sigma_1};y,z)
                =
                P(\overline{\sigma_1\sigma_2\sigma_2};y,z)
                =
                b_1.
            \]
        Thus, $a_n=b_n$ for every $n\in\ZZ_{\geq0}$.

        \item
        This follows immediately from part (1) and~$(*)$.
        \end{enumerate}
    \end{proof}

    \begin{lem}
    \label{lem:an5koukan}
        For every $n\geq0$,
            \[
                a_{n+4} - (y^{-2}z^2-y^{-2}) a_{n+3} + y^{-4}z^2a_{n+2} - (y^{-6}z^2-y^{-6}) a_{n+1} + y^{-8}a_n = 0.
            \]
    \end{lem}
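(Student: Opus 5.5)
The plan is to lift the computation from closed braids to a finite-dimensional algebra. Then show that the single element $x:=\sigma_1\sigma_2$ is annihilated by the quartic
\[
f(X)=X^4-(y^{-2}z^2-y^{-2})X^3+y^{-4}z^2X^2-(y^{-6}z^2-y^{-6})X+y^{-8},
\]
and read off the recurrence by multiplying by $x^n\sigma_1$ and taking closures.

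\textbf{Step 1 (the algebra).} Let $H_3$ be the quotient of the group algebra $\QQ(y,z)[B_3]$ by the two-sided ideal generated by $y\sigma_i+y^{-1}\sigma_i^{-1}-z$ for $i=1,2$. Equivalently, $\sigma_i^2=y^{-1}z\sigma_i-y^{-2}$.

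The skein relation is local and linear, and $P$ of a closed braid is invariant under conjugation (Lemma~\ref{lem:cyclic_closure}). Hence the map $\beta\mapsto P(\overline{\beta};y,z)$, extended linearly, factors through a linear functional $\operatorname{tr}$ on $H_3$. This works because any element of the ideal, after closure, is a combination of skein triples, so its image is $0$. In particular $a_n=\operatorname{tr}(x^n\sigma_1)$.

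It therefore suffices to prove $f(x)=0$ in $H_3$. The desired identity then follows from
\[
0=\operatorname{tr}\bigl(f(x)\,x^n\sigma_1\bigr)=a_{n+4}-(y^{-2}z^2-y^{-2})a_{n+3}+\cdots+y^{-8}a_n.
\]

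\textbf{Step 2 (factor $f$).} One checks directly that
\[
f(X)=\bigl(X^2-(y^{-2}z^2-2y^{-2})X+y^{-4}\bigr)\bigl(X^2-y^{-2}X+y^{-4}\bigr).
\]
Expanding the product gives $X^3$-coefficient $-(y^{-2}z^2-y^{-2})$, $X^2$-coefficient $2y^{-4}+y^{-4}z^2-2y^{-4}=y^{-4}z^2$, and so on.

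Let $q_1,q_2$ be the roots of $q^2-y^{-1}zq+y^{-2}=0$, so that $q_1+q_2=y^{-1}z$ and $q_1q_2=y^{-2}$. The first quadratic factor is $(X-q_1^2)(X-q_2^2)$, since $q_1^2+q_2^2=y^{-2}z^2-2y^{-2}$ and $q_1^2q_2^2=y^{-4}$.

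\textbf{Step 3 (representations).} Since $q_1\neq q_2$, the algebra $H_3$ is a generic Iwahori--Hecke algebra of type $A_2$. Over the splitting field it is semisimple of dimension $6$, with irreducible representations of dimensions $1,1,2$ ($1^2+1^2+2^2=6$). It suffices to check that $f(\rho(x))=0$ in each irreducible representation $\rho$.

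In the two one-dimensional representations $\sigma_1,\sigma_2\mapsto q_i$, we get $x\mapsto q_i^2$, which is a root of the first factor.

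For the two-dimensional representation, write explicit matrices: take $\sigma_1=\begin{pmatrix}q_1&1\\0&q_2\end{pmatrix}$ and $\sigma_2$ lower triangular with diagonal $(q_2,q_1)$ and off-diagonal entry $c$, where $c$ is fixed by the braid relation, which forces $c=-q_1q_2$. Then $\det\rho(x)=(q_1q_2)^2=y^{-4}$ and $\operatorname{tr}\rho(x)=2q_1q_2+c=q_1q_2=y^{-2}$. Hence $\rho(x)$ satisfies $X^2-y^{-2}X+y^{-4}$ by Cayley--Hamilton.

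Therefore $f(x)$ acts as zero in every irreducible representation. By semisimplicity, $f(x)=0$ in $H_3$.

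\textbf{Main obstacle.} The delicate points are the well-definedness of $\operatorname{tr}$ on $H_3$ and the semisimplicity and faithfulness claim. If I want to avoid citing Hecke algebra theory, I would instead verify $f(x)=0$ by brute force in the $6$-dimensional basis $\{1,\sigma_1,\sigma_2,\sigma_1\sigma_2,\sigma_2\sigma_1,\sigma_1\sigma_2\sigma_1\}$. I would first use that $x^3=\Delta^2$ is central, reducing $f(x)$ to a combination of $x^3,x^2,x,1$. Then I would expand $x^2=\sigma_1\sigma_2\sigma_1\sigma_2=\sigma_1^2\sigma_2\sigma_1$ and $x^3$ using $\sigma_i^2=y^{-1}z\sigma_i-y^{-2}$, and check that all six coefficients vanish.

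The real risk is a sign slip in the $2$-dimensional representation. The trace must come out as $+y^{-2}$ (eigenvalues $-y^{-2}\omega^{\pm1}$) to match the stated coefficients. As a safeguard, I would check this trace against the initial values $a_0,\dots,a_3$ of Proposition~\ref{prop:initial_an}.
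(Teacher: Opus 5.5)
Your proof is correct, and it takes a genuinely different route from the paper's.

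\textbf{The paper's route.} The paper works directly with closed braids. For $n\ge 2$ it rewrites $T(3,n+2)=\overline{(\sigma_1\sigma_2)^n\sigma_1\sigma_1\sigma_2\sigma_1}$ and applies the skein relation three times. This expresses $c_{n+2}$ through $a_{n+1},a_n,a_{n-1}$ and $c_{n-1}$. It then eliminates the $c$'s with Lemma~\ref{lem:anbncn}(2), $zc_m=ya_m+y^{-1}a_{m-1}$, leaving the five-term relation in $a_n$ alone.

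\textbf{Your route.} You prove the stronger, trace-free identity $f(\sigma_1\sigma_2)=0$ in $H_3$. The recurrence then holds for $\operatorname{tr}(x^n h)$ for every $h\in H_3$, so it covers $a_n$, $b_n$ and $c_n$ at once, without Lemma~\ref{lem:anbncn}. Your individual checks are right:
\begin{itemize}
\item the factorization of $f$ is correct;
\item the braid relation forces $c=-q_1q_2$;
\item $\rho(\sigma_1\sigma_2)$ has trace $y^{-2}$ and determinant $y^{-4}$.
\end{itemize}

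\textbf{What each approach buys.} Your argument explains what the paper only computes. The roots of $f_{y,z}$ are $q_1^2$, $q_2^2$ and the two roots of $X^2-y^{-2}X+y^{-4}$. This accounts for the discriminant $-3y^{-24}z^2(z^2-4)(z^2-3)^4$ in Lemma~\ref{lem:disc}. It would also allow a closed-form solution without the generating function for $G_n$. The price is reliance on Hecke-algebra structure theory: dimension $6$, generic semisimplicity, and the list of irreducibles. The paper's proof needs nothing beyond the skein relation and the braid relation.

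\textbf{One imprecision.} $q_1\neq q_2$ alone does not make $H_3$ semisimple. You also need $q_1^2-q_1q_2+q_2^2\neq 0$, i.e.\ $z^2\neq 3$. This is exactly the condition for your $2$-dimensional representation to be irreducible, since otherwise $\sigma_1$ and $\sigma_2$ share the eigenvector for $q_2$. It is also the source of the factor $(z^2-3)$ in the discriminant. The condition holds because $z$ is an indeterminate, but you should state it.

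\textbf{The fallback.} Your brute-force verification in $\{1,\sigma_1,\sigma_2,\sigma_1\sigma_2,\sigma_2\sigma_1,\sigma_1\sigma_2\sigma_1\}$ avoids all of this theory. To show that $f(x)$ reduces to $0$ you only need that these six elements span $H_3$, not that they are independent.
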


    \begin{proof}
        We prove an equivalent relation with the index shifted by two.
        
        Suppose that $n\geq2$.
        The braid relation gives
            \[
                T(3,n+2) = \overline{(\sigma_1\sigma_2)^n \sigma_1\sigma_2\sigma_1\sigma_2} = \overline{(\sigma_1\sigma_2)^n \sigma_1\sigma_1\sigma_2\sigma_1}.
            \]
        Applying the skein relation, we obtain
            \[
                yc_{n+2} + y^{-1} P(\overline{(\sigma_1\sigma_2)^n\sigma_2\sigma_1};y,z) = za_{n+1}.
            \]
        Applying the skein relation once more gives
            \begin{align*}
                &P(\overline{(\sigma_1\sigma_2)^n\sigma_2\sigma_1};y,z)\\
                &\quad=
                -y^{-2}
                P(\overline{(\sigma_1\sigma_2)^{n-1}\sigma_1^2};y,z)
                +
                y^{-1}za_n,
            \end{align*}
        and
            \[
                P(\overline{(\sigma_1\sigma_2)^{n-1}\sigma_1^2};y,z) = -y^{-2}c_{n-1} + y^{-1}za_{n-1}.
            \]
        Consequently,
            \begin{align*}
                yc_{n+2}
                &+
                y^{-1} P(\overline{(\sigma_1\sigma_2)^n\sigma_2\sigma_1};y,z)\\
                &=
                yc_{n+2}+y^{-5}c_{n-1}
                -y^{-4}za_{n-1}+y^{-2}za_n\\
                &=
                za_{n+1}.
            \end{align*}
        By Lemma~\ref{lem:anbncn},
            \[
                c_n = yz^{-1}a_n+y^{-1}z^{-1}a_{n-1}.
            \]
        Substituting this expression yields
            \begin{align*}
                0={}&
                y^2z^{-1}a_{n+2} - (z-z^{-1})a_{n+1} + y^{-2}za_n\\
                &\quad
                - y^{-4}(z-z^{-1})a_{n-1} +y^{-6}z^{-1}a_{n-2}.
            \end{align*}
        Setting $m=n-2$ gives the stated recurrence for every $m\geq0$.
    \end{proof}

    Define
        \[
            f_{y,z}(X)
            =
            X^4-(y^{-2}z^2-y^{-2})X^3
            +y^{-4}z^2X^2
            -(y^{-6}z^2-y^{-6})X
            +y^{-8}
            \in\QQ(y,z)[X].
        \]
    Let $\Omega$ be the splitting field of $f_{y,z}$ and write
        \[
            f_{y,z}(X)=\prod_{i=1}^4(X-\lambda_i).
        \]

    \begin{lem}
    \label{lem:disc}
        The discriminant of $f_{y,z}$ with respect to $X$ is nonzero:
            \[
                \operatorname{Disc}_X(f_{y,z})\neq0
                \qquad\text{in }\QQ(y,z).
            \]
    \end{lem}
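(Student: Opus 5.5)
The plan is to factor $f_{y,z}$ explicitly over $\QQ(y,z)$ and check directly that its four roots are pairwise distinct. Since $\operatorname{Disc}_X(f_{y,z})=\prod_{i<j}(\lambda_i-\lambda_j)^2$, this is equivalent to the claim.

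\textbf{Step 1: rescale.} Substitute $X=y^{-2}W$. Each coefficient of $f_{y,z}$ carries the power of $y^{-2}$ matching its degree deficit. This gives
\[
    f_{y,z}(y^{-2}W)=y^{-8}\,g_z(W),
    \qquad
    g_z(W)=W^4-(z^2-1)W^3+z^2W^2-(z^2-1)W+1 .
\]
The roots of $f_{y,z}$ are $y^{-2}$ times the roots of $g_z$. Hence they are pairwise distinct exactly when the roots of $g_z$ are. In fact $\operatorname{Disc}_X(f_{y,z})=y^{-24}\operatorname{Disc}_W(g_z)$, so it suffices to treat $g_z\in\QQ(z)[W]$.

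\textbf{Step 2: use the palindromic structure.} The polynomial $g_z$ is palindromic, so I would divide by $W^2$ and set $U=W+W^{-1}$. This turns $g_z(W)=0$ into the quadratic
\[
    U^2-(z^2-1)U+(z^2-2)=0 .
\]
This quadratic has $U=1$ as a root, since $1-(z^2-1)+z^2-2=0$; the other root is $U=z^2-2$. Consequently
\[
    g_z(W)=(W^2-W+1)\bigl(W^2-(z^2-2)W+1\bigr),
\]
which I would confirm by expanding.

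\textbf{Step 3: distinctness.} I would check three things.
\begin{enumerate}
    \item The factor $W^2-W+1$ has discriminant $-3\neq0$, so its two roots are distinct.
    \item The factor $W^2-(z^2-2)W+1$ has discriminant $(z^2-2)^2-4=z^2(z^2-4)$, which is nonzero in $\QQ(z)$.
    \item The two factors share no root. A common root $W$ would give the same value of $U=W+W^{-1}$, forcing $1=z^2-2$, which is false in $\QQ(z)$.
\end{enumerate}
Therefore the four roots of $g_z$ are pairwise distinct, so the $\lambda_i$ are distinct and $\operatorname{Disc}_X(f_{y,z})\neq0$.

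The only real obstacle is spotting the rescaling $X=y^{-2}W$ and the resulting reciprocal symmetry. After that, everything is a short verification.

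A fallback, if a factorization-free argument is preferred, is specialization. The discriminant is a polynomial in the coefficients, so it suffices to exhibit one value of $(y,z)$, say $y=1$, $z=3$, at which the specialized quartic has four distinct roots. The factorization above makes this immediate.
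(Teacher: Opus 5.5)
Your proof is correct, but it takes a different route from the paper. The paper's proof only asserts the outcome of a direct calculation, $\operatorname{Disc}_X(f_{y,z})=\operatorname{Res}(f_{y,z},f'_{y,z})=-3y^{-24}z^2(z^2-4)(z^2-3)^4$. It does not show the underlying $7\times 7$ Sylvester determinant, so a reader must trust it or redo it with computer algebra.

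You instead rescale by $X=y^{-2}W$ and use the reciprocal symmetry to reduce to a quadratic in $U=W+W^{-1}$. This gives the factorization $g_z(W)=(W^2-W+1)\bigl(W^2-(z^2-2)W+1\bigr)$, after which you check by hand in three short steps that the four roots are distinct.

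Your route also explains the paper's closed form. For monic $p,q$ one has $\operatorname{Disc}(pq)=\operatorname{Disc}(p)\operatorname{Disc}(q)\operatorname{Res}(p,q)^2$. The factor $-3$ is the discriminant of $W^2-W+1$, and $z^2(z^2-4)$ is the discriminant of the second quadratic. The factor $(z^2-3)^4$ is the square of $\operatorname{Res}\bigl(W^2-W+1,\,W^2-(z^2-2)W+1\bigr)=(z^2-3)^2$, which vanishes exactly at the obstruction $z^2=3$ from your third check. The rescaling contributes $y^{-24}$, so this reproduces the paper's value exactly.

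As a by-product you get the roots explicitly: $\lambda=y^{-2}e^{\pm\pi i/3}$ and $\lambda=y^{-2}\mu^{\pm1}$ with $\mu+\mu^{-1}=z^2-2$. The paper's computation does not provide these. The only extra information the paper's approach gives is the expanded discriminant polynomial, which the lemma does not need.

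Your fallback of specializing at $(y,z)=(1,3)$ is also valid. The quartic is monic, so the discriminant is a polynomial in the coefficients and specializes correctly.
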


    \begin{proof}
        Put
            \[
                s=y^{-2}z^2-y^{-2},
                \qquad
                t=y^{-4}z^2,
                \qquad
                u=y^{-6}z^2-y^{-6},
                \qquad
                v=y^{-8}.
            \]
        Since $f_{y,z}$ is monic, direct calculation gives
            \begin{align*}
                \operatorname{Disc}_X(f_{y,z})
                &=
                \operatorname{Res}(f_{y,z},f'_{y,z})\\
                &=
                -3y^{-24}z^2(z^2-4)(z^2-3)^4
                \neq0.
            \end{align*}
    \end{proof}

    To apply Lemma~\ref{lem:5koukan} to the recurrence for $a_n$, define $G_n$ by
        \[
            \frac{X^3}{1-(y^{-2}z^2-y^{-2})X+y^{-4}z^2X^2 - (y^{-6}z^2-y^{-6})X^3+y^{-8}X^4} = \sum_{n=0}^\infty G_nX^n.
        \]
    This is the sequence $g_n$ in Lemma~\ref{lem:5koukan} specialized to
        \[
            s=y^{-2}z^2-y^{-2},
            \qquad
            t=y^{-4}z^2,
            \qquad
            u=y^{-6}z^2-y^{-6},
            \qquad
            v=y^{-8}.
        \]
    Hence, $G_n$ satisfies the same fourth-order recurrence and has initial
    values
        \[
            G_0=G_1=G_2=0,
            \qquad
            G_3=1.
        \]

    Under the condition $i+2j+3k+4\ell+3=n$, we have
        \begin{align*}
            s^it^ju^kv^\ell
            &=
            y^{-2i-4j-6k-8\ell}
            z^{2j}(z^2-1)^{i+k}\\
            &=
            y^{-2(n-3)}z^{2j}(z^2-1)^{i+k}.
        \end{align*}
    Moreover,
        \[
            z^{2j}(z^2-1)^{i+k} = z^{2(i+j+k)} \sum_{r=0}^{i+k} (-1)^r\binom{i+k}{r}z^{-2r}.
        \]
    Thus, for $n\geq3$,
        \[ \displaystyle
            G_n = \hspace{-4mm}
            \sum_{\substack{i+2j+3k+4\ell+3=n\\i,j,k,\ell\geq0}} \hspace{-6mm}
            (-1)^{j+\ell} \frac{(i+j+k+\ell)!}{i!j!k!\ell!} y^{-2(n-3)}z^{2(i+j+k)} \sum_{r=0}^{i+k} (-1)^r\binom{i+k}{r}z^{-2r}.
        \]

    Using Propositions~\ref{prop:initial_an} and~\ref{prop:T2n}, set
        \begin{align*}
            C_3
                &:=
                a_0
                =
                (y+y^{-1})z^{-1},\\
            C_2
                &:=
                y^{-2}(z^2-1)a_0-a_1
                =
                y^{-3}z,\\
            C_1
                &:=
                y^{-4}z^2a_0-y^{-2}(z^2-1)a_1+a_2
                =
                y^{-5}z,\\
            C_0
                &:=
                y^{-6}(z^2-1)a_0-y^{-4}z^2a_1
                +y^{-2}(z^2-1)a_2-a_3\\
                &=
                (y^{-7}+y^{-9})z^{-1}.
        \end{align*}
    Lemmas~\ref{lem:5koukan}, \ref{lem:an5koukan}, and~\ref{lem:disc}
    now imply
        \[
            a_n = C_3G_{n+3}-C_2G_{n+2}+C_1G_{n+1}-C_0G_n.
        \]

    By Lemma~\ref{lem:anbncn},
        \begin{align*}
            c_n
            &=
            yz^{-1}a_n+y^{-1}z^{-1}a_{n-1}\\
            &=
            yz^{-1}C_3G_{n+3}
            +(-yz^{-1}C_2+y^{-1}z^{-1}C_3)G_{n+2}\\
            &\quad+
            (yz^{-1}C_1-y^{-1}z^{-1}C_2)G_{n+1}\\
            &\quad+
            (-yz^{-1}C_0+y^{-1}z^{-1}C_1)G_n
            -y^{-1}z^{-1}C_0G_{n-1}.
        \end{align*}

    Denote the coefficient of $G_{n+i}$ by $D_i$ for
    $i=3,2,1,0,-1$. Then
        \begin{align*}
            D_3
                &=yz^{-1}C_3
                =(y^2+1)z^{-2},\\
            D_2
                &=-yz^{-1}C_2+y^{-1}z^{-1}C_3\\
                &=-y^{-2}+(1+y^{-2})z^{-2},\\
            D_1
                &=yz^{-1}C_1-y^{-1}z^{-1}C_2
                =0,\\
            D_0
                &=-yz^{-1}C_0+y^{-1}z^{-1}C_1\\
                &=-(y^{-6}+y^{-8})z^{-2}+y^{-6},\\
            D_{-1}
                &=-y^{-1}z^{-1}C_0\\
                &=-(y^{-8}+y^{-10})z^{-2}.
        \end{align*}

    We have therefore proved the following theorem.

    \begin{thm}
    \label{thm:T3_HOM}
        For every $n\geq1$,
        \begin{align*}
            P(T(3,n);y,z)
            &=
            (y^2+1)z^{-2}G_{n+3} + \bigl((1+y^{-2})z^{-2}-y^{-2}\bigr)G_{n+2}\\
            &\quad+ \bigl(y^{-6}-(y^{-6}+y^{-8})z^{-2}\bigr)G_n - (y^{-8}+y^{-10})z^{-2}G_{n-1}.
        \end{align*}
    \end{thm}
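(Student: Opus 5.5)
The theorem is essentially the endpoint of the preceding computation, so the plan is to assemble the ingredients already established and check the few places where care is needed. First, by Lemma~\ref{lem:an5koukan} the sequence $a_n$ satisfies the fourth-order recurrence with characteristic polynomial $f_{y,z}$, and by Lemma~\ref{lem:disc} this polynomial has nonzero discriminant in $\QQ(y,z)$. Hence Lemma~\ref{lem:5koukan} applies over $K=\QQ(y,z)$. With $s,t,u,v$ specialized as above and $g_n=G_n$, it expresses $a_n$ for all $n\ge 0$ as
\[
a_n=a_0G_{n+3}-(sa_0-a_1)G_{n+2}+(ta_0-sa_1+a_2)G_{n+1}-(ua_0-ta_1+sa_2-a_3)G_n .
\]
This is $C_3G_{n+3}-C_2G_{n+2}+C_1G_{n+1}-C_0G_n$ by the definitions of the $C_i$. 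As a sanity check I would verify directly that this formula reproduces $a_0,\dots,a_3$ using $G_0=G_1=G_2=0$, $G_3=1$ and the recurrence for $G$ (which gives $G_4=s$, $G_5=s^2-t$, $G_6=s^3-2st+u$). Since both sides satisfy the same recurrence, this check alone already proves the identity, independently of the root-based derivation.

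Second, I would evaluate the constants $C_0,\dots,C_3$ explicitly. I would insert $a_0=(y+y^{-1})z^{-1}$, $a_1=P(T(2,2))$, $a_2=P(T(2,4))$, and $a_3$ from Proposition~\ref{prop:initial_an}, using the values of Proposition~\ref{prop:T2n} (together with $P(T(2,5))$). This is routine Laurent polynomial arithmetic. I expect it to be the main place where errors can creep in, especially for $C_0$, which involves $a_3$ and hence both $P(T(2,3))$ and $P(T(2,5))$. I would double-check $C_0=(y^{-7}+y^{-9})z^{-1}$ by also computing $c_1=1$ and $c_2=P(T(2,3))$ from the final formula.

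Third, for $n\ge1$ I would use Lemma~\ref{lem:anbncn}(2) in the form $c_n=yz^{-1}a_n+y^{-1}z^{-1}a_{n-1}$. Substituting the expressions for $a_n$ and $a_{n-1}$, both valid since $n-1\ge0$, and collecting by $G_{n+3},\dots,G_{n-1}$ gives the coefficients $D_3,\dots,D_{-1}$. The key cancellation is $D_1=yz^{-1}\cdot y^{-5}z-y^{-1}z^{-1}\cdot y^{-3}z=0$. The remaining coefficients are immediate products, and they yield exactly the stated formula for $P(T(3,n);y,z)$.

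Finally, I would confirm the boundary case $n=1$. Here $G_{n-1}=G_0=0$, so nothing special occurs, and the formula should give $c_1=P(\bigcirc)=1$. This also serves as a check of the computed constants.
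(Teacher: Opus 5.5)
Your proposal is correct and follows the paper's proof essentially step for step. You apply Lemma~\ref{lem:5koukan} to the recurrence of Lemma~\ref{lem:an5koukan}, evaluate $C_0,\dots,C_3$ from $a_0,\dots,a_3$, substitute into $c_n=yz^{-1}a_n+y^{-1}z^{-1}a_{n-1}$, and observe that the $G_{n+1}$ coefficient cancels. Your extra observation is a small genuine simplification: checking $a_0,\dots,a_3$ directly against $G_3,\dots,G_6$ already proves the expression for $a_n$, which makes the discriminant computation of Lemma~\ref{lem:disc} and the root-based derivation behind Lemma~\ref{lem:5koukan} unnecessary.
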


    \begin{ex}
    \label{ex:T32}
        For $n=2$, Theorem~\ref{thm:T3_HOM} gives
            \[
                P(T(3,2);y,z) = y^{-2}z^2-2y^{-2}-y^{-4}.
            \]
        By Theorem~\ref{thm:torus_symmetry}, $T(3,2)\cong T(2,3)$.
        Thus, this agrees with the HOMFLY polynomial of $T(2,3)$ obtained in Example~\ref{ex:HOM_trefoil}.
    \end{ex}

    \begin{ex}
    \label{ex:T33}
        For $n=3$, Theorem~\ref{thm:T3_HOM} gives
            \[
            \begin{aligned}
                P(T(3,3);y,z)
                ={}&
                y^{-4}z^4-4y^{-4}z^2+3y^{-4}-y^{-4}z^{-2}\\
                &{}-y^{-6}z^2+3y^{-6}
                -2y^{-6}z^{-2}-y^{-8}z^{-2}.
            \end{aligned}
            \]
    \end{ex}

    \begin{ex}
        For $n=4$, Theorem~\ref{thm:T3_HOM} gives
            \[
            \begin{aligned}
                P(T(3,4);y,z)
                ={}&
                y^{-6}z^6-6y^{-6}z^4+10y^{-6}z^2-5y^{-6}\\
                &{}-y^{-8}z^4+5y^{-8}z^2-5y^{-8}-y^{-10}.
            \end{aligned}
            \]
    \end{ex}

%-------------------------------------------------
\subsection{\texorpdfstring{The HOMFLY polynomials of $T(-3,n)$}{The HOMFLY polynomials of T(-3,n)}}
%-------------------------------------------------

    We now obtain the formula for $T(-3,n)$ using mirror images.

    Define
        \[
            \widetilde{G}_n := \left.G_n\right|_{y\mapsto y^{-1}}.
        \]
    Then
        \[
            \widetilde{G}_0 = \widetilde{G}_1 = \widetilde{G}_2 = 0.
        \]
    For $n\geq3$, the explicit formula for $G_n$ gives
        \[\displaystyle
            \widetilde{G}_n = \hspace{-4mm} \sum_{\substack{i+2j+3k+4\ell+3=n\\i,j,k,\ell\geq0}} \hspace{-5mm} (-1)^{j+\ell} \frac{(i+j+k+\ell)!}{i!j!k!\ell!} y^{2(n-3)}z^{2(i+j+k)} \sum_{r=0}^{i+k} (-1)^r\binom{i+k}{r}z^{-2r}.
        \]
    
    \begin{cor}
    \label{cor:T_minus3_HOM}
        For every $n\geq1$,
        \begin{align*}
            P(T(-3,n);y,z)
            &=
            (y^{-2}+1)z^{-2}\widetilde{G}_{n+3} + \bigl((1+y^2)z^{-2}-y^2\bigr) \widetilde{G}_{n+2}\\
            &\quad+
            \bigl(y^6-(y^6+y^8)z^{-2}\bigr) \widetilde{G}_n - (y^8+y^{10})z^{-2}\widetilde{G}_{n-1}.
        \end{align*}
    \end{cor}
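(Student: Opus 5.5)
The plan is to deduce the corollary directly from Theorem~\ref{thm:T3_HOM} by means of the mirror-image formula, Theorem~\ref{thm:mirror_HOMFLY}. By definition, $T(-3,n)$ is the mirror image $T(3,n)^*$, and its orientation is the one induced from the standard closed braid diagram of $T(3,n)$. Hence Theorem~\ref{thm:mirror_HOMFLY} gives
\[
P(T(-3,n);y,z)=P(T(3,n);y^{-1},z)
\]
for every $n\geq1$. The whole proof then reduces to substituting $y\mapsto y^{-1}$ into the right-hand side of Theorem~\ref{thm:T3_HOM}.

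The first step is to check that this substitution is legitimate term by term. The quantities $G_m$ are defined as coefficients of a formal power series in $X$ whose coefficients lie in $\QQ(y,z)$. By the explicit multinomial formula, each $G_m$ is in fact a Laurent polynomial in $y$ and $z$. Substituting $y\mapsto y^{-1}$ is a ring automorphism of $\ZZ[y^{\pm1},z^{\pm1}]$, so it commutes with the finite linear combination appearing in Theorem~\ref{thm:T3_HOM}. By definition, each $G_{n+i}$ is sent to $\widetilde{G}_{n+i}$.

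The second step is to transform the coefficients:
\[
(y^2+1)z^{-2}\mapsto (y^{-2}+1)z^{-2},\qquad (1+y^{-2})z^{-2}-y^{-2}\mapsto (1+y^{2})z^{-2}-y^{2},
\]
\[
y^{-6}-(y^{-6}+y^{-8})z^{-2}\mapsto y^{6}-(y^{6}+y^{8})z^{-2},\qquad -(y^{-8}+y^{-10})z^{-2}\mapsto -(y^{8}+y^{10})z^{-2}.
\]
These are exactly the coefficients stated in the corollary.

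Finally, the stated explicit formula for $\widetilde{G}_n$ follows in the same way. The factor $y^{-2(n-3)}$ in the formula for $G_n$ becomes $y^{2(n-3)}$, and nothing else in that formula depends on $y$. The values $\widetilde{G}_0=\widetilde{G}_1=\widetilde{G}_2=0$ are inherited from $G_0=G_1=G_2=0$.

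The only step needing any care is the orientation convention. It must be confirmed that the $T(-3,n)$ of the definition is precisely the oriented mirror image to which Theorem~\ref{thm:mirror_HOMFLY} applies. It is: the definition reverses every crossing of the standard diagram and keeps the induced orientations, and this is exactly the diagram $D^*$ used in the proof of Theorem~\ref{thm:mirror_HOMFLY}. Beyond this check, the argument is routine substitution.
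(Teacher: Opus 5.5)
Your proposal is correct and is the same argument as the paper's, whose entire proof is to apply the mirror-image formula (Theorem~\ref{thm:mirror_HOMFLY}) to Theorem~\ref{thm:T3_HOM}, i.e.\ to substitute $y\mapsto y^{-1}$ term by term. Your additional checks spell out what the paper's one-line proof leaves implicit: that the substitution commutes with the finite linear combination, that the transformed coefficients are as stated, and that the orientation convention for $T(-3,n)$ matches the $D^*$ in the mirror formula.
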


    \begin{proof}
        Apply Theorem~\ref{thm:mirror_HOMFLY} to Theorem~\ref{thm:T3_HOM}.
    \end{proof}

    \begin{ex}
        For $n=2$, Corollary~\ref{cor:T_minus3_HOM} gives
            \[
                P(T(-3,2);y,z)
                =
                y^2z^2-2y^2-y^4.
            \]
        By Example~\ref{ex:T32},
            \[
                P(T(-3,2);y,z)
                \neq
                P(T(3,2);y,z).
            \]
        Therefore,
            \[
                T(-3,2)\not\cong T(3,2).
            \]
        This recovers the well-known fact that the trefoil knot is not equivalent to its mirror image; in other words, the trefoil is chiral.
    \end{ex}

    \begin{ex}
        For $n=3$, Corollary~\ref{cor:T_minus3_HOM} gives
            \[
            \begin{aligned}
                P(T(-3,3);y,z)
                ={}&
                y^4z^4-4y^4z^2+3y^4-y^4z^{-2}\\
                &{}-y^6z^2+3y^6-2y^6z^{-2}-y^8z^{-2}.
            \end{aligned}
            \]
        By Example~\ref{ex:T33},
            \[
                P(T(-3,3);y,z)
                \neq
                P(T(3,3);y,z).
            \]
        Hence, with the prescribed orientations,
            \[
                T(-3,3)\not\cong T(3,3).
            \]
    \end{ex}
%====================================================
\section{Applications}
\label{sec:applications}
%====================================================

    In this section, we derive several consequences concerning the torus links $T(3,n)$ from the explicit formula for their HOMFLY polynomials obtained in the previous section.
    In particular, by considering the highest-degree term with respect to $z$, we show that the links $T(3,n)$ are mutually distinct.
    We also see, by using the mirror-image formula, that $T(3,n)$ is not equivalent to its mirror image $T(-3,n)$ in general.

%----------------------------------------------------
\subsection{\texorpdfstring{The highest-degree term of $P(T(3,n);y,z)$}{The highest-degree term of P(T(3,n);y,z)}}
%----------------------------------------------------

    We first determine the highest-degree term of $P(T(3,n);y,z)$ with respect to $z$ from the explicit formula obtained in the previous section.

    \begin{lem}
    \label{lem:highest_degz_Gn}
        For $n\geq 3$, we have
            \[
                \deg_z G_n = 2(n-3),
            \]
        and its highest-degree term with respect to $z$ is
            \[
                y^{-2(n-3)}z^{2(n-3)}.
            \]
    \end{lem}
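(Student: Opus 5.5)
We will read the leading $z$-term directly off the explicit formula for $G_n$ derived above. For $n\geq3$,
\[
G_n=\sum_{\substack{i+2j+3k+4\ell+3=n\\ i,j,k,\ell\geq0}}(-1)^{j+\ell}\frac{(i+j+k+\ell)!}{i!j!k!\ell!}\,y^{-2(n-3)}z^{2j}(z^2-1)^{i+k}.
\]
The factor $y^{-2(n-3)}$ is common to all terms. So it suffices to show that the polynomial
\[
\sum(-1)^{j+\ell}\tfrac{(i+j+k+\ell)!}{i!j!k!\ell!}\,z^{2j}(z^2-1)^{i+k}\in\ZZ[z]
\]
has degree exactly $2(n-3)$ with leading coefficient $1$.

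First we bound degrees. The summand indexed by $(i,j,k,\ell)$ is a polynomial in $z$ of degree $2(i+j+k)$ with leading coefficient $(-1)^{j+\ell}\frac{(i+j+k+\ell)!}{i!j!k!\ell!}$, since $(z^2-1)^{i+k}$ is monic of degree $2(i+k)$. Under the constraint $i+2j+3k+4\ell=n-3$ we have
\[
i+j+k\leq i+2j+3k+4\ell=n-3,
\]
with equality if and only if $j=k=\ell=0$.

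Next we identify the top term. The equality case forces $(i,j,k,\ell)=(n-3,0,0,0)$. That index contributes $\frac{(n-3)!}{(n-3)!}(z^2-1)^{n-3}$, whose top term is $z^{2(n-3)}$ with coefficient $1$. Every other index contributes a polynomial of $z$-degree at most $2(n-4)$, strictly less than $2(n-3)$, so there is no cancellation in the top degree. Multiplying back by $y^{-2(n-3)}$ gives the leading term $y^{-2(n-3)}z^{2(n-3)}$, and hence $\deg_z G_n=2(n-3)$.

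The argument is just a degree count, and the one point that needs checking is that the extremal index is unique. That follows from the strict inequality $j+2k+4\ell>0$ whenever $(j,k,\ell)\neq(0,0,0)$.

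As an alternative, one could induct with the fourth-order recurrence $G_{n+4}=sG_{n+3}-tG_{n+2}+uG_{n+1}-vG_n$, where $s=y^{-2}z^2-y^{-2}$. Among the four terms, only $sG_{n+3}$ reaches degree $2(n+1)$, with top term $y^{-2}z^2\cdot y^{-2n}z^{2n}$. The other three have degree at most $2n$. The base case is $G_3=1$, and the induction must be started with $G_0=G_1=G_2=0$ handled separately, for instance by checking $G_4$, $G_5$, $G_6$ directly. The explicit-sum route above avoids this bookkeeping, so we take it.
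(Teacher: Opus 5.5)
Your proof is correct and follows essentially the same route as the paper. Both arguments read the $z$-degree off the explicit multinomial formula for $G_n$, use $i+j+k\le n-3$ with equality only at $(i,j,k,\ell)=(n-3,0,0,0)$, and check that this unique index contributes leading coefficient $1$. The only cosmetic difference is that you invoke monicity of $(z^2-1)^{i+k}$, where the paper expands it binomially and takes $r=0$.
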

    
    \begin{proof}
        For $n\geq 3$, the polynomial $G_n$ is given by
            \[
                \displaystyle G_n =
                \hspace{-5mm}\sum_{\substack{i+2j+3k+4\ell + 3 = n\\i,j,k,\ell \geq 0}}\hspace{-5mm}
                (-1)^{j+\ell}\frac{(i+j+k+\ell)!}{i!j!k!\ell!}
                y^{-2(n-3)}z^{2(i+j+k)}
                \sum_{r=0}^{i+k} (-1)^r \binom{i+k}{r}z^{-2r}.
            \]
        Hence each term has degree
            \[
                2(i+j+k)-2r
            \]
        with respect to $z$.
    
        In order to attain the highest degree, we must have $r=0$.
        Moreover, since
            \[
                i+j+k \leq i + 2j + 3k + 4 \ell = n-3,
            \]
        equality holds only when
            \[
                j+2k+4\ell = 0,
            \]
        that is, only when
            \[
                j=k=\ell =0.
            \]
        
        Therefore, the highest degree with respect to $z$ is $2(n-3)$, and the term attaining this degree is only the term corresponding to
            \[
                (i,j,k,\ell,r)=(n-3,0,0,0,0).
            \]

        Furthermore,
            \[
                \displaystyle (-1)^0 \frac{(n-3)!}{(n-3)!0!0!0!} \cdot (-1)^0 \binom{n-3}{0} = 1.
            \]
        Thus the highest-degree term with respect to $z$ is
            \[
                y^{-2(n-3)}z^{2(n-3)}.
            \]
    \end{proof}
    
    Using this lemma, we obtain the highest-degree term of $P(T(3,n);y,z)$ as follows.
    
    \begin{prop}
    \label{prop:highest_degz_T3n}
        For any $n\geq 1$, we have
            \[
                \deg_z P(T(3,n);y,z) = 2n-2,
            \]
        and its highest-degree term with respect to $z$ is
            \[
                y^{-2n+2}z^{2n-2}.
            \]
    \end{prop}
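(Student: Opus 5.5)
The plan is to read off the top $z$-degree directly from Theorem~\ref{thm:T3_HOM}, using Lemma~\ref{lem:highest_degz_Gn} for each $G_m$ that appears. In the formula
\[
P(T(3,n);y,z)=(y^2+1)z^{-2}G_{n+3}+\bigl((1+y^{-2})z^{-2}-y^{-2}\bigr)G_{n+2}+\bigl(y^{-6}-(y^{-6}+y^{-8})z^{-2}\bigr)G_n-(y^{-8}+y^{-10})z^{-2}G_{n-1},
\]
the coefficients have $z$-degree at most $0$. By Lemma~\ref{lem:highest_degz_Gn}, for $n\geq 1$ we have $\deg_z G_{n+3}=2n$, $\deg_z G_{n+2}=2n-2$, $\deg_z G_n=2n-6$, and $\deg_z G_{n-1}=2n-8$. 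Whenever the index is at most $2$, the corresponding $G$ vanishes, which only helps.

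This gives the following $z$-degrees for the four terms:
\begin{itemize}
\item the first term has $z$-degree at most $2n-2$;
\item the second term has $z$-degree at most $2n-2$, coming from the summand $-y^{-2}G_{n+2}$;
\item the third and fourth terms have $z$-degree at most $2n-6$.
\end{itemize}
So only the first two terms can contribute to $z^{2n-2}$.

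The main obstacle is that Lemma~\ref{lem:highest_degz_Gn} gives only the leading term of $G_{n+3}$. Its $z^{2n-2}$ coefficient, i.e.\ the subleading coefficient of $G_{n+3}$, is needed to cancel or combine with the contribution from $G_{n+2}$. I would therefore extract the coefficient of $z^{2(m-4)}$ in $G_m$ from the explicit formula. The degree-$2(m-3)-2$ contributions come from exactly two sources:
\begin{itemize}
\item $(i,j,k,\ell,r)=(m-3,0,0,0,1)$, contributing $-(m-3)$;
\item $(m-5,1,0,0,0)$, with $i+j+k=m-4$, contributing $-(m-4)$.
\end{itemize}
Cases with $k\geq1$ or $\ell\geq1$ drop the $z$-degree by at least $4$. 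Hence
\[
G_m=y^{-2(m-3)}\bigl(z^{2(m-3)}-(2m-7)z^{2(m-4)}+\cdots\bigr)
\]
for $m\geq4$, and one checks the small case $m=4$ separately: $G_4=s=y^{-2}(z^2-1)$, which is consistent.

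The $z^{2n-2}$ coefficient is then:
\begin{itemize}
\item from the first term, using both the leading and the subleading coefficient of $G_{n+3}$: $(y^2+1)y^{-2n}\bigl(-(2n-1)\bigr)+y^{-2n}\cdot(\text{nothing})$;
\item from the second term: $-y^{-2}\cdot y^{-2(n-1)}=-y^{-2n}$.
\end{itemize}
This does not produce $y^{-2n+2}$, which signals that I must recheck the top degree. The first term actually has degree $2n-2$ with leading coefficient $(y^2+1)y^{-2n}=y^{-2n+2}+y^{-2n}$. The $z^{2n-2}$ coefficient is therefore
\[
(y^{-2n+2}+y^{-2n})-y^{-2n}=y^{-2n+2},
\]
and the subleading coefficient of $G_{n+3}$ only enters at $z^{2n-4}$, so it is not needed after all. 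The only real care is bookkeeping the $z^{-2}$ shift correctly and confirming that no other term reaches degree $2n-2$.

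Finally, I would treat small $n$ separately:
\begin{itemize}
\item for $n=1$, $G_n=G_{n-1}=0$, and $G_4$, $G_3$ give $z^0$ with coefficient $(y^2+1)y^{-2}-y^{-2}=1$, matching $P(\bigcirc)=1$;
\item for $n=2$, the result agrees with Example~\ref{ex:T32}.
\end{itemize}
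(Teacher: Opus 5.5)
Your final argument is correct and is the paper's argument. Only $(y^2+1)z^{-2}G_{n+3}$ and the summand $-y^{-2}G_{n+2}$ reach $z$-degree $2n-2$, and their leading coefficients $y^{-2n+2}+y^{-2n}$ and $-y^{-2n}$ combine to $y^{-2n+2}$; all other summands have $z$-degree at most $2n-4$, and $n=1,2$ are checked directly, as in the paper. The subleading-coefficient computation for $G_m$, together with the intermediate line that wrongly places it at $z^{2n-2}$, is unnecessary, as you noticed yourself, and should be removed from the write-up.
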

    
    \begin{proof}
        By Theorem~\ref{thm:T3_HOM}, we have
            \[
            \begin{aligned}
                P(T(3,n);y,z)
                ={}& (y^2+1)z^{-2}G_{n+3}
                +((1+y^{-2})z^{-2}-y^{-2})G_{n+2} \\
                &\quad +(y^{-6}-(y^{-6}+y^{-8})z^{-2})G_n
                -(y^{-8}+y^{-10})z^{-2}G_{n-1}.
            \end{aligned}
            \]

        If $n=1$, then $T(3,1)$ is the trivial knot, and hence
            \[
                P(T(3,1);y,z) = 1.
            \]
        Thus the assertion holds in this case.
        The case $n=2$ follows from Example~\ref{ex:T32}.

        We now assume that $n \geq 3$.
        By Lemma~\ref{lem:highest_degz_Gn}, the highest-degree term of $(y^2+1)z^{-2} G_{n+3}$ with respect to $z$ is
            \[
                (y^2+1)y^{-2n}z^{2n-2}.
            \]

        Also, the highest-degree term of $G_{n+2}$ with respect to $z$ is
            \[
                y^{-2(n-1)}z^{2(n-1)}.
            \]
        The summand $(1+y^{-2})z^{-2}G_{n+2}$ has degree at most $2n-4$ with respect to $z$, whereas the summand $-y^{-2}G_{n+2}$ contributes
            \[
                -y^{-2}y^{-2n+2}z^{2n-2} = -y^{-2n}z^{2n-2}.
            \]
        Therefore, the highest-degree term of $P(T(3,n);y,z)$ with respect to $z$ is
            \[
                (y^2+1)y^{-2n}z^{2n-2} -y^{-2n}z^{2n-2}
                = y^{-2n+2}z^{2n-2}.
            \]
    
        On the other hand, all the remaining summands have degree lower than $2n-2$ with respect to $z$.
        This proves the assertion.
    \end{proof}

%----------------------------------------------------
\subsection{\texorpdfstring{Distinguishing the links $T(3,n)$}{Distinguishing the links T(3,n)}}
%----------------------------------------------------

    It is already known from the classification of torus links that torus links with different parameters are distinct in general.
    Here we observe that, in the present family, this distinction can be detected directly from the explicit formula for the HOMFLY polynomial obtained in this paper.
    By Proposition~\ref{prop:highest_degz_T3n}, the torus links $T(3,n)$ are mutually distinct.
    
    \begin{cor}
    \label{cor:T3n_distinct}
        Let $m$ and $n$ be positive integers.
        If $m\neq n$, then
            \[
                T(3,m)\not\cong T(3,n).
            \]
    \end{cor}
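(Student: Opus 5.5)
We prove the contrapositive using the fact that the HOMFLY polynomial is a link invariant (Theorem~\ref{thm:HOMFLY_existence}). Suppose $T(3,m)\cong T(3,n)$ for positive integers $m,n$. Then
\[
    P(T(3,m);y,z)=P(T(3,n);y,z)
\]
in $\ZZ[y^{\pm1},z^{\pm1}]$. Both sides are equal Laurent polynomials, so they have the same degree with respect to $z$.

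We then apply Proposition~\ref{prop:highest_degz_T3n}, which holds for every positive integer, including the small cases $n=1,2$ treated separately there. It gives $\deg_z P(T(3,m);y,z)=2m-2$ and $\deg_z P(T(3,n);y,z)=2n-2$. Equating these yields $2m-2=2n-2$, hence $m=n$. So whenever $m\neq n$, the polynomials differ and $T(3,m)\not\cong T(3,n)$.

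There is essentially no obstacle here, since all the work lies in Proposition~\ref{prop:highest_degz_T3n}. The one point to check is that $\deg_z$ is well defined, which requires the polynomial to be nonzero. This holds because the proposition exhibits the explicit nonzero leading term $y^{-2n+2}z^{2n-2}$.

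As a remark, one could also compare the full leading terms $y^{-2m+2}z^{2m-2}$ and $y^{-2n+2}z^{2n-2}$, but comparing $z$-degrees alone already suffices.
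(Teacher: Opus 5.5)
Your proposal is correct and follows the paper's argument: HOMFLY invariance plus $\deg_z P(T(3,n);y,z)=2n-2$ from Proposition~\ref{prop:highest_degz_T3n}, including the cases $n=1,2$. The only difference is that you phrase it as a contrapositive and explicitly check that $\deg_z$ is well defined via the nonzero leading term, which the paper leaves implicit.
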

    
    \begin{proof}
        By Proposition~\ref{prop:highest_degz_T3n}, we have
            \[
                \deg_z P(T(3,n);y,z)=2n-2.
            \]
        In particular, the case in which one of the parameters is equal to $1$ is included, since $P(T(3,1);y,z)=1$ has $z$-degree $0$, while the degree is positive for $T(3,k)$ with $k\geq2$.
        Therefore, if $m\neq n$, then
            \[
                \deg_z P(T(3,m);y,z)=2m-2
                \neq
                2n-2=\deg_z P(T(3,n);y,z).
            \]
        Hence
            \[
                P(T(3,m);y,z) \neq P(T(3,n);y,z).
            \]
    
        Since the HOMFLY polynomial is an invariant of oriented links, it follows that $T(3,m)$ and $T(3,n)$ are not equivalent.
    \end{proof}

%----------------------------------------------------
\subsection{Comparison with mirror images}
%----------------------------------------------------

    We next compare $T(3,n)$ with its mirror image $T(-3,n)$.
    
    \begin{cor}
    \label{cor:T3n_mirror_distinct}
        For $n\geq 2$, we have
            \[
                T(3,n)\not\cong T(-3,n).
            \]
    \end{cor}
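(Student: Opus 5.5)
The plan is to compare the highest-degree terms in $z$ of the two polynomials. By Proposition~\ref{prop:highest_degz_T3n}, for $n\geq 2$ the polynomial $P(T(3,n);y,z)$ has $z$-degree $2n-2$, and its top coefficient is $y^{-2n+2}z^{2n-2}$. Since $T(-3,n)$ is by definition the mirror image of $T(3,n)$, Theorem~\ref{thm:mirror_HOMFLY} gives
\[
    P(T(-3,n);y,z)=P(T(3,n);y^{-1},z).
\]
The substitution $y\mapsto y^{-1}$ does not affect powers of $z$. Hence $P(T(-3,n);y,z)$ also has $z$-degree $2n-2$, and its leading term is $y^{2n-2}z^{2n-2}$. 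The same conclusion can be read off from Corollary~\ref{cor:T_minus3_HOM} by applying the proof of Lemma~\ref{lem:highest_degz_Gn} to $\widetilde{G}_n$.

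Next I will compare these leading coefficients as elements of $\ZZ[y^{\pm1}]$. They are $y^{-(2n-2)}$ and $y^{2n-2}$. These are equal only if $4n-4=0$, that is, only if $n=1$. So for $n\geq2$ the coefficients of $z^{2n-2}$ differ, and therefore
\[
    P(T(3,n);y,z)\neq P(T(-3,n);y,z).
\]

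Because the HOMFLY polynomial is an invariant of oriented links (Theorem~\ref{thm:HOMFLY_existence}), this gives $T(3,n)\not\cong T(-3,n)$ with the prescribed orientations. I do not expect a real obstacle. The only point to check is that Proposition~\ref{prop:highest_degz_T3n} covers $n=2$, which it does via Example~\ref{ex:T32}. The case $n=1$ is correctly excluded, since there both polynomials equal $1$.
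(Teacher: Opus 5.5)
Your proposal is correct and matches the paper's proof essentially step for step. Both take the leading $z$-term $y^{-2n+2}z^{2n-2}$ from Proposition~\ref{prop:highest_degz_T3n}, turn it into $y^{2n-2}z^{2n-2}$ via the mirror-image formula (Theorem~\ref{thm:mirror_HOMFLY}), and conclude from $y^{-2n+2}\neq y^{2n-2}$ for $n\geq 2$ together with the invariance of the HOMFLY polynomial.
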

    
    \begin{proof}
        By Proposition~\ref{prop:highest_degz_T3n}, the highest-degree term of $P(T(3,n);y,z)$ with respect to $z$ is
            \[
                y^{-2n+2}z^{2n-2}.
            \]
        On the other hand, by Theorem~\ref{thm:mirror_HOMFLY} and Proposition~\ref{prop:highest_degz_T3n}, the highest-degree term of $P(T(-3,n);y,z)$ with respect to $z$ is
            \[
                y^{2n-2}z^{2n-2}.
            \]
        
        If $n\geq 2$, then
            \[
                y^{-2n+2} \neq y^{2n-2}.
            \]
        Hence
            \[
                P(T(3,n);y,z)\neq P(T(-3,n);y,z).
            \]
        Since the HOMFLY polynomial is an invariant of oriented links, it follows that
            \[
                T(3,n)\not\cong T(-3,n).
            \]
    \end{proof}

    \begin{rem}
        Although the HOMFLY polynomial is not a complete invariant for links in general, it may distinguish all members of a restricted family.
        The above computation shows that it distinguishes all members of the family $\{T(3,n)\}_{n\geq 1}$, because
            \[
                n=\frac{\deg_z P(T(3,n);y,z)+2}{2}.
            \]
        Hence the HOMFLY polynomial determines the parameter $n$ within this family.
    \end{rem}

\bibliographystyle{unsrt}
\bibliography{biblio}

\begin{thebibliography}{10}

\bibitem{Kawauchi2015KnotTheory}
Akio Kawauchi.
\newblock {\em Knot Theory}, volume~10 of {\em Kyoritsu Lecture Series:
  Mathematical Adventure}.
\newblock Kyoritsu Shuppan, Tokyo, 2015.

\bibitem{FreydYetterHosteLickorishMillettOcneanu1985}
Peter Freyd, David Yetter, Jim Hoste, W.~B.~Raymond Lickorish, Kenneth Millett,
  and Adrian Ocneanu.
\newblock A new polynomial invariant of knots and links.
\newblock {\em Bulletin of the American Mathematical Society}, 12(2):239--246,
  1985.

\bibitem{PrzytyckiTraczyk1987}
J{\'o}zef~H. Przytycki and Pawe{\l} Traczyk.
\newblock Invariants of links of {C}onway type.
\newblock {\em Kobe Journal of Mathematics}, 4(2):115--139, 1987.

\bibitem{Jones1987Hecke}
Vaughan F.~R. Jones.
\newblock Hecke algebra representations of braid groups and link polynomials.
\newblock {\em Annals of Mathematics}, 126(2):335--388, 1987.

\bibitem{RossoJones1993}
Marc Rosso and Vaughan F.~R. Jones.
\newblock On the invariants of torus knots derived from quantum groups.
\newblock {\em Journal of Knot Theory and Its Ramifications}, 2(1):97--112,
  1993.

\bibitem{LinZheng2010}
Xiao-Song Lin and Hao Zheng.
\newblock On the {H}ecke algebras and the colored {HOMFLY} polynomial.
\newblock {\em Transactions of the American Mathematical Society},
  362(1):1--18, 2010.

\bibitem{BriniEynardMarino2012}
Andrea Brini, Bertrand Eynard, and Marcos Mari{\~n}o.
\newblock Torus knots and mirror symmetry.
\newblock {\em Annales Henri Poincar{\'e}}, 13:1873--1910, 2012.

\bibitem{Birman1974BraidsLinksMCG}
Joan~S. Birman.
\newblock {\em Braids, Links, and Mapping Class Groups}.
\newblock Number~82 in Annals of Mathematics Studies. Princeton University
  Press, Princeton, NJ, 1974.
\newblock Based on lecture notes by James Cannon.

\bibitem{Markov1936FreeEquivalenceClosedBraids}
A.~Markoff.
\newblock \"uber die freie \"aquivalenz der geschlossenen z\"opfe.
\newblock {\em Rec. Math. [Mat. Sbornik] N.S.}, 1(1):73--78, 1936.
\newblock Received 1935-10-07. (German).

\bibitem{Lickorish1997Introduction}
W.~B.~Raymond Lickorish.
\newblock {\em An Introduction to Knot Theory}, volume 175 of {\em Graduate
  Texts in Mathematics}.
\newblock Springer, 1997.

\bibitem{Adams2004KnotBook}
Colin~C. Adams.
\newblock {\em The Knot Book: An Elementary Introduction to the Mathematical
  Theory of Knots}.
\newblock American Mathematical Society, Providence, RI, 2004.

\end{thebibliography}

\end{document}